\def\iso {{\stackrel{\hskip -2pt \sim}\to}}
\def\dual                 {{\vee}}
\def\rk                 {{\rm rk}}
\def\ii                 {{\rm i}}
\def\ee                 {{\rm e}}
\def\Vol		{{\rm Vol}}
\def\ZZ                 {{\mathbb Z}}
\def\PP                {{\mathbb P}} 
\def\RR                 {{\mathbb R}}
\def\CC                 {{\mathbb C}}
\def\QQ                 {{\mathbb Q}}
\def\OO		{{\mathcal O}}
\newtheorem{lemma}{Lemma}[section]
\newtheorem{theorem}[lemma]{Theorem}
\newtheorem{corollary}[lemma]{Corollary}
\newtheorem{proposition}[lemma]{Proposition}
\theoremstyle{definition}
\newtheorem{definition}[lemma]{Definition}
\newtheorem{example}[lemma]{Example}
\newtheorem{conjecture}[lemma]{Conjecture}
\newtheorem{remark}[lemma]{Remark}
\theoremstyle{remark}
\newtheorem*{proof*}{Proof}
\numberwithin{equation}{section}
\title[Applications of HMS: duality conjectures]{
Applications of homological mirror symmetry to hypergeometric systems: duality conjectures}
\author{Lev A.  Borisov and R. Paul Horja} 
\address{Department of Mathematics \\ Rutgers University \\ 
Piscataway \\ NJ \\ 08854-8019 \\ USA \\Email: {\tt borisov@math.rutgers.edu}} 
 \address{Universit\"at Wien \\ Fakult\"at f\"ur Mathematik \\Wien, Austria \\
 Email: {\tt richard.paul.horja@univie.ac.at}} 
\thanks{The first  
author was partially supported by the NSF grant DMS-1201466.
The second author was supported by the grants
NSF DMS 0854977 FRG, NSF DMS 0600800, NSF DMS 0652633 FRG, NSF DMS
0854977, NSF DMS 0901330, FWF P24572 N25, FWF P20778 and by an ERC Grant.
}
\begin{document}

\begin{abstract}
Homological mirror symmetry for crepant resolutions of Gorenstein toric singularities
leads to a pair of conjectures on certain hypergeometric systems of PDEs. 
We explain these conjectures and verify them in some cases.
\end{abstract}

\maketitle

\section{Introduction} 

Mirror symmetry is the string theory statement that two N=(2,2) superconformal field theories
are isomorphic, up to a so-called mirror involution of the theory. Classically, this is a statement 
about a type IIB theory on one compact Calabi-Yau variety $X$ and a type IIA theory on a mirror 
compact Calabi-Yau variety $Y$ at the level of closed strings. Homological mirror symmetry of Kontsevich is an open-string version of mirror symmetry. It asserts that the derived category 
of coherent sheaves $D^b(X)$ on $X$ (which is the category of boundary conditions on type IIB open strings propagating  in $X$) is equivalent to the derived Fukaya category $DFuk(Y)$ of $Y$, which is a 
remarkable statement that relates two a priori unrelated constructs. 
Since $D^b(X)$ and $DFuk(Y)$ depend non-trivially on the complex parameters of $X$ and on the 
K\"abler parameters of $Y$ respectively, mirror symmetry implies a (at least local) bijection between these sets of parameters.

\medskip

The less well-known but equally fascinating aspect of  homological mirror symmetry
concerns the dependence of the boundary conditions of IIB open strings on $X$ on the 
K\"abler parameters of $X$. Such dependence is locally trivial, but the global structure of this dependence leads to a number
of conjectural consequences. One of them is that birational Calabi-Yau varieties
are derived equivalent, which is a prototypical case of Kawamata's $D$-equivalence conjecture
\cite{Kaw2}.
One also expects to have a locally trivial family of triangulated categories over the base given
by the complex moduli of the mirror Calabi-Yau. After passing to Grothendieck groups this 
should simply be the 
variation of Hodge structures on the cohomology of the mirror
variety.
Our paper deals with this aspect of homological mirror symmetry at the level of 
Grothendieck groups in the important
case of  so-called toric Calabi-Yau
manifolds.

\medskip

Toric Calabi-Yau manifolds are noncompact toric varieties with trivial canonical bundle,
such as crepant resolutions of toric Gorenstein singularities. The lack of compactness
of these Calabi-Yau varieties causes some subtlety in the formulation of homological mirror
symmetry. One also generally needs to use smooth Deligne-Mumford stacks rather than
varieties since higher-dimensional toric singularities often do not admit crepant scheme resolutions. In this paper, we greatly clarify the situation and formulate a set of conjectures
that relate different projective resolutions of a Gorenstein toric singularity. We verify the conjectures 
in a non-trivial example by a brute force calculation and produce other evidence in favor
of the conjectures.

\medskip

To formulate the results of the paper we need to introduce the combinatorial data
that underlie crepant resolutions of toric singularities.
Let $C$ be a rational polyhedral cone in a lattice $N$ which defines an affine toric variety
$X={\rm Spec}\,\CC[C^\dual\cap N^\dual]$. This variety has Gorenstein singularities if and 
only if there exists a linear function  $\deg:N\to\ZZ$ which equals $1$ on the generators of all
one-dimensional rays of $C$. Let $\{v_1,\ldots,v_n\}$ be a subset of the set of lattice points of degree $1$
in $C$ which includes all generators of the rays of $C$. To any projective simplicial 
subdivision $\Sigma$ of $C$ such that the generators of the one-dimensional cones 
are among these $v_i$, one can associate a smooth toric Deligne-Mumford stack $\PP_\Sigma$
with a crepant birational morphism 
$$
\pi:\PP_\Sigma\to X.
$$
To such $\Sigma,$ we associate the Grothendieck group $K_0(\PP_\Sigma)$ 
of $D(\PP_\Sigma)$ as well as the Grothendieck group  $K_0^c(\PP_\Sigma)$ 
of the subcategory of 
complexes whose cohomology sheaves are supported on the compact set $\pi^{-1}(0)$.
We prove that Euler characteristics provides a perfect pairing between these spaces,
see Corollary \ref{perfect}.

\medskip

Crucially, we define isomorphisms between $K_0(\PP_\Sigma)$ 
and $K_0^c(\PP_\Sigma)$ and the solutions 
to so-called better behaved GKZ hypergeometric systems $bbGKZ(C,0)$ and
$bbGKZ(C^\circ,0)$  (see Definition \ref{bbgkz}) respectively given by certain Gamma series
$\Gamma$ and $\Gamma^\circ$.
We claim that these Gamma series are remarkably compatible with the
natural maps 
between the $K_0$ and $K_0^c$ spaces, for different resolutions of singularities of $X$.
Specifically, for two adjacent triangulations $\Sigma_1$ and $\Sigma_2,$ there are 
natural pullback-pushforward maps  $pp$ between the Grothendieck groups which 
we expect to be compatible with the analytic continuation of the solutions of $bbGKZ$.

\medskip

\noindent{\bf Conjecture I (=\ref{conj.ac})}.
The following diagrams of isomorphisms are commutative
$$
\begin{array}{ccc}
K_0(\PP_{\Sigma_2})^\dual&\stackrel{pp^\dual}\longrightarrow& K_0(\PP_{\Sigma_1})^\dual
\\[.3em]
\Gamma\downarrow~{~}&                             &~{~} \downarrow\Gamma
\\[.3em]
bbGKZ(C,0) &\stackrel{a.c.}\longrightarrow&bbGKZ(C,0)
\end{array}
$$
$$
\begin{array}{ccc}
K^c_0(\PP_{\Sigma_2})^\dual&\stackrel{pp^\dual}\longrightarrow& K^c_0(\PP_{\Sigma_1})^\dual
\\[.3em]
\Gamma^\circ\downarrow~{~}&                             & ~{~}\downarrow\Gamma^\circ
\\[.3em]
bbGKZ(C^\circ,0) &\stackrel{a.c.}\longrightarrow&bbGKZ(C^\circ,0)
\end{array}
$$
where the top rows are the duals of the maps induced by the pullback-pushforward
derived functors,
and the bottom rows are analytic continuations along  a certain path in the 
domain of parameters considered in \cite{BH2}.

\medskip

The pairing between $K_0(\PP_\Sigma)$ and $K_0^c(\PP_\Sigma)$ should itself
be given by some conjectural pairing between the solutions of $bbGKZ(C,0)$ and 
$bbGKZ(C^\circ,0)$.

\medskip

\noindent{\bf Conjecture II (=\ref{conj.pairing})}.
There exists a collection $p_{c,d}(x_1, \ldots, x_n)$ of polynomials 
indexed by 
$c\in C$, $d\in C^\circ,$ such that the following hold.
\begin{itemize}
\item All but a finite number of $p_{c,d}$ are zero.

\item For any pair of solutions $(\Phi_c)$ and $(\Psi_d)$ of $bbGKZ(C,0)$ and
$bbGKZ(C^\circ,0)$ respectively the sum
\begin{equation}\tag{$\sharp$}
\sum_{c,d} p_{c,d}\Phi_c\Psi_d
\end{equation}
is constant as a function of $(x_1,\ldots,x_n)$.

\item
The pairing provided by $(\sharp)$ is non-degenerate. 

\item
For any projective simplicial subdivision $\Sigma$, 
the pairing provided by $(\sharp)$ 
is the inverse of the 
Euler characteristics pairing between $K_0(\PP_\Sigma)$ and 
$K^c_0(\PP_\Sigma)$ under the $\Gamma$ and $\Gamma^\circ$ series.
\end{itemize}

\medskip

The paper is organized as follows. In Sections \ref{sec:SR} and 
\ref{sec:K}, we discuss
a combinatorial approach to cohomology with compact support 
(using piecewise polynomial functions) and Grothendieck groups
in the case of semi--projective toric varieties. The technical
details describing the crucial perfect
pairing between the regular Grothendieck group and the
Grothendieck group with compact support are given in 
Section \ref{sec.chi}. The derived category 
origin of the latter Grothendieck group is explained in 
Section \ref{sec:dcat}. Finally, after describing 
the Gamma series solutions to the two classes of
associated better
behaved GKZ systems in Section \ref{gamex}, we 
present the conjectures in Section \ref{sec:conj}.
In the the last two sections of the paper, we  
exhibit evidence in certain classes of examples 
lending strong support for the conjectures. 

\section{Stanley-Reisner calculation for cohomology with  compact support}
\label{sec:SR}
In this preliminary section we describe a Stanley-Reisner type presentation 
of cohomology with compact support of semi-projective toric varieties. While this 
presentation 
is undoubtedly known to some experts in the field, we were unable to find a suitable reference.

\medskip

Let $C$ be a finite rational polyhedral cone\footnote{We will be sloppy in our 
notation in that $C$ will sometimes denote the set
of lattice points of the cone and sometimes the set of real points of it. We hope that this does not lead to confusion.}
in the lattice $N$. We assume that $\dim C = \rk (N)$ but we do not assume $C\cap (-C) = \{0\}$, and 
in fact we allow $C$ to be all of $N$.
Let $v_i$ be a set of $n$ elements of $C$ and let a simplicial fan $\Sigma$ correspond to 
a simplicial complex on the set of indices $i$. We assume that the support of $\Sigma$ is all of $C$.
 We  denote the simplicial
complex (i.e. a set of subsets of the set of indices) by $\Sigma$ as well, as it will be clear from the context
what we consider. Some $v_i$ may not form a cone in $\Sigma$.

\begin{definition}
Denote by 
$A$ the ring of  $\Sigma$-piecewise
polynomial functions on $C$ (with values in $\CC$).
These are collections of polynomial functions on cones $\sigma\in\Sigma$ which are compatible 
with restrictions. Denote by $A^c$  the  ideal of $A$ of functions $f$ with the property 
that $f\vert_{\partial C}=0$.  
\end{definition}

\begin{remark}
When $C$ fills the whole space, we have $\partial C=\emptyset$ and $A^c=A$.
In particular, when $\rk N=0$, we have $A^c=A=\CC$.
\end{remark}

The goal of the rest of the section is to describe natural presentations
of $A$ and $A^c$. The former is well-known, while the latter does not seem to appear in the literature.

\medskip

For each $i$ such that $v_i$ generates a ray of the fan, 
consider a piecewise linear function $D_i$ on $C$ which takes the 
value $1$ on $v_i$ and $0$ on the other
generators of rays of $\Sigma.$ The piecewise linear function $D_i$
is called the Courant function associated to $v_i,$ see \cite{Billera}.
If $v_i$ is not a ray generator of $\Sigma$, 
set $D_i=0$. The following is the standard Stanley-Reisner presentation of $A$, see 
\cite[2.3, 3.6]{Billera}, \cite[1.3]{Brion}.
\begin{proposition}
The ring $A$ is naturally isomorphic to the quotient of $\CC[D_1,\ldots,D_n]$ by the ideal generated by the monomials
$$
\prod_{i\in I} D_i
$$
for all $I\not\in \Sigma$.
\end{proposition}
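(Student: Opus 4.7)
The plan is to exhibit an explicit isomorphism by constructing a ring map $\phi:\CC[D_1,\ldots,D_n]\to A$, showing it factors through the Stanley--Reisner quotient, and verifying bijectivity.

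First I would define $\phi$ by sending each formal variable $D_i$ to the corresponding Courant function in $A$ (or to zero if $v_i$ is not a ray generator of $\Sigma$). To see that $\phi$ kills the Stanley--Reisner ideal, I would argue as follows: if $I\not\in\Sigma$, then for any maximal cone $\sigma\in\Sigma$, the ray set of $\sigma$ is in $\Sigma$ and hence cannot contain $I$, so some $i\in I$ has $v_i\not\in\sigma$. By the defining property of Courant functions this gives $D_i|_\sigma\equiv 0$, forcing $\prod_{i\in I}D_i|_\sigma \equiv 0$; since this is true for every maximal cone, the product vanishes identically in $A$.

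Next I would establish surjectivity of the induced map $\overline{\phi}:\CC[D_1,\ldots,D_n]/I_\Sigma \to A$. The base case is piecewise linear: any piecewise linear $f\in A$ equals $\sum_i f(v_i)D_i$ (summed over ray generators). Indeed, on a maximal cone $\sigma=\langle v_{i_1},\ldots,v_{i_d}\rangle$ the restrictions $D_{i_j}|_\sigma$ form the dual basis to $v_{i_1},\ldots,v_{i_d}$, so both sides agree at each $v_{i_j}$ and hence agree as linear forms on $\sigma$. For higher polynomial degree, I would induct using the fact that on each maximal simplicial $\sigma$ every polynomial function is a polynomial in $D_{i_1}|_\sigma,\ldots,D_{i_d}|_\sigma$. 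Subtracting a local expression from $f$ and proceeding by Noetherian induction on subfans of $\Sigma$ (adding one maximal cone at a time, using Mayer--Vietoris style compatibility along the face of attachment) produces the global polynomial in the $D_i$'s.

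For injectivity I would compare Hilbert series of $\CC[D_1,\ldots,D_n]/I_\Sigma$ and $A$, both graded by polynomial degree. The left-hand side has the well-known Stanley--Reisner Hilbert series encoded by the $f$-vector of $\Sigma$. For $A$, the restriction map $A\hookrightarrow \prod_{\sigma\in\Sigma(d)}\CC[\sigma^\dual]$ together with the compatibility conditions along common faces gives a Čech-type computation whose graded dimensions match the Stanley--Reisner formula; combined with the surjectivity from the previous step, this forces $\overline{\phi}$ to be an isomorphism. The main obstacle is the inductive patching in the surjectivity argument: unlike the linear case, there is no canonical formula expressing a general piecewise polynomial as a polynomial in the Courant functions, and one must carefully glue local polynomial representations along shared faces of $\Sigma$. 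The Hilbert series comparison, by contrast, is essentially a routine double-count once the local structure of $A$ on each maximal cone is spelled out.
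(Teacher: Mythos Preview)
The paper does not give its own proof of this proposition; it simply records it as a known result and cites Billera \cite[2.3, 3.6]{Billera} and Brion \cite[1.3]{Brion}. Your sketch is a reasonable reconstruction of the standard argument and is correct in outline.

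One simplification: your surjectivity step (inductive patching over subfans) and your injectivity step (Hilbert-series comparison) can be merged into a single direct check that the monomials $\prod_i D_i^{k_i}$ with support in $\Sigma$ map to a $\CC$-basis of $A$. On a maximal cone $\sigma$ the restrictions $D_i|_\sigma$ for $i$ a ray of $\sigma$ are linear coordinates, so $f|_\sigma$ has a unique expansion in monomials supported in the ray set of $\sigma$; restricting further to a face $\tau\subset\sigma$ kills exactly the monomials not supported in the ray set of $\tau$, and the surviving coefficients are determined by $f|_\tau$ alone. Hence for each monomial the coefficient is the same in every maximal cone whose ray set contains its support (using that the star of any face of $\Sigma$ is connected, which holds because $\Sigma$ subdivides a convex cone). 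This gives both surjectivity and injectivity at once and avoids the Mayer--Vietoris bookkeeping you flagged as the main obstacle.
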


We now give an analogous statement for $A^c$.
\begin{proposition}
The module $A^c$ over $\CC[D_1,\ldots,D_n]$ has the following natural presentation
using generators $F_I,$ for $I \in \Sigma$ with $\sigma_I^\circ\subseteq C^\circ.$
It is the quotient of the free module
$$
\bigoplus_{I\in\Sigma, \sigma_I^\circ\subseteq C^\circ} \CC[D_1,\ldots,D_n] F_I
$$
by the submodule generated by relations
\begin{equation}\label{1}
D_i F_I  - F_{I\cup \{i\}},~{\rm for}~i\not\in I, I\cup\{i\}\in \Sigma
\end{equation}
and 
\begin{equation}\label{2}
D_i F_I  ,~{\rm for}~i\not\in I, I\cup\{i\} \notin \Sigma.
\end{equation}
\end{proposition}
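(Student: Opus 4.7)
The plan is to equip both sides with a $\ZZ_{\geq 0}^n$-grading (putting $\deg D_i = e_i$ and $\deg F_I = \sum_{i\in I} e_i$), observe that the stated relations are homogeneous, and then match graded pieces. Denote by $M$ the presented module and let $\phi : M \to A^c$ be the candidate homomorphism sending $F_I \mapsto \prod_{i \in I} D_i$. Both relations go to zero in $A$: relation (\ref{1}) by direct expansion and relation (\ref{2}) by the Stanley-Reisner presentation of $A$. Moreover, since the boundary subcomplex $\Sigma' = \{\tau \in \Sigma : \tau \subseteq \partial C\}$ is closed under taking faces, the hypothesis $\sigma_I^\circ \subseteq C^\circ$ rules out any $\tau \in \Sigma'$ containing $\sigma_I$, so $\prod_{i \in I} D_i$ vanishes on $\partial C$. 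Hence $\phi$ is a well-defined graded map.

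Surjectivity is immediate from the monomial basis: a $\CC$-basis of $A^c$ is given by monomials $D^a$ with $\mathrm{supp}(a) \in \Sigma$ and $\sigma_{\mathrm{supp}(a)}^\circ \subseteq C^\circ$, and each such monomial equals $\phi(D^{a - \sum_{i \in \mathrm{supp}(a)} e_i} F_{\mathrm{supp}(a)})$. Injectivity reduces to the graded inequality $\dim_\CC M_a \leq \dim_\CC (A^c)_a$ for every multidegree $a$. Setting $I_a = \mathrm{supp}(a)$, the right-hand side equals $1$ when $I_a \in \Sigma$ and $\sigma_{I_a}^\circ \subseteq C^\circ$, and $0$ otherwise. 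The left-hand side $M_a$ is generated by the classes of $D^{a - \sum_{i \in I} e_i} F_I$ as $I$ ranges over valid index sets (i.e., $I \in \Sigma$ with $\sigma_I^\circ \subseteq C^\circ$) contained in $I_a$.

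The injectivity now splits into three cases. When $I_a \in \Sigma$ and $\sigma_{I_a}^\circ \subseteq C^\circ$, repeated application of (\ref{1}) identifies every such class with $D^{a - \sum_{i \in I_a} e_i} F_{I_a}$, using that enlargements remain valid since $\Sigma'$ is closed under faces; hence $\dim M_a \leq 1$. When $I_a \in \Sigma$ but $\sigma_{I_a}^\circ \not\subseteq C^\circ$, then $\sigma_{I_a} \in \Sigma'$, and every face $\sigma_I$ with $I \subseteq I_a$ also lies in $\Sigma'$, so there are no valid index sets in degree $a$ and $M_a = 0$. The main obstacle is the case $I_a \notin \Sigma$: I would start from any valid $I \subseteq I_a$ and iteratively enlarge it via (\ref{1}); since $I_a \notin \Sigma$, this enlargement cannot reach $I_a$, so at some intermediate valid $I$ there must exist $i \in I_a \setminus I$ with $I \cup \{i\} \notin \Sigma$. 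Relation (\ref{2}), multiplied by the monomial $D^{a - \sum_{j \in I} e_j - e_i}$, then forces $D^{a - \sum_{j \in I} e_j} F_I = 0$ in $M$, and the chain of (\ref{1})-equalities propagates this vanishing back to the starting generator. This combinatorial termination argument is the core of the proof; the three cases then assemble into $\dim M_a \leq \dim (A^c)_a$, and together with surjectivity this shows $\phi$ is an isomorphism.
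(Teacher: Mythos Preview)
Your proof is correct and tracks the paper's argument closely: both set up the graded map $\phi$, identify the monomial $\CC$-basis of $A^c$, and obtain surjectivity immediately. The difference is in the injectivity step. The paper invokes Taylor's resolution of a monomial ideal to say that all syzygies among the generators $\prod_{i\in I}D_i$ are generated by the lcm relations $(\prod_{i\in I\setminus J}D_i)F_J-(\prod_{i\in J\setminus I}D_i)F_I$, and then checks these lie in the submodule generated by \eqref{1} and \eqref{2} by a two-case argument on whether $I\cup J\in\Sigma$. You instead perform a direct graded dimension count, splitting on whether $\mathrm{supp}(a)\in\Sigma$ and whether $\sigma_{\mathrm{supp}(a)}^\circ\subseteq C^\circ$. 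The underlying combinatorics is the same---your Case~1 reproduces the paper's ``$I\cup J\in\Sigma$'' reduction to $F_{I\cup J}$, and your Case~3 termination argument is exactly what the paper glosses as ``it is easy to see that both terms lie in the module generated by \eqref{2}'' (which in fact also needs \eqref{1}, as your chain of equalities makes explicit). Your route is more self-contained, avoiding the citation to Taylor's resolution; the paper's route is shorter once that result is granted.
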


\medskip
\begin{proof}
Note that there is a natural morphism of $A$--modules from
\begin{equation}\label{map}
\bigoplus_{I\in\Sigma, \sigma_I^\circ\subseteq C^\circ} \CC[D_1,\ldots,D_n] F_I/\langle{\rm relations~\eqref{1}\eqref{2}}\rangle
\end{equation}
to $A^c$ which sends 
$F_I\mapsto \prod_{i\in I}D_i$.

\medskip

Further observe that the rings $\CC[D_1,\ldots,D_n]$, $A$ and the module $A^c$ are naturally $\ZZ^{n}$ graded
where $n$ is the number of indices $i$. The underlying reason for such grading is the $(\RR_{>0})^n$ action on $C$ that scales the rays of the fan and scales the rest of $C$ linearly with it. This
has an effect of separately scaling $D_i$. 

\medskip

The support of a monomial $\prod_i D_i^{k_i}$ is the set of 
indices $i$ such that $k_i >0.$
The non-zero monomials that occur in $A$ are exactly
the ones whose support lies in $\Sigma$.
Clearly, $A^c$ is a $\ZZ^{n}$ graded ideal of $A$. A non-zero 
monomial $\prod_i D_i^{k_i}$ lies in it iff
its support lies in $\Sigma$ but is not a boundary simplex of $C$. Thus every monomial $m$ in $A^c$ 
is divisible by the image of $F_{{\rm supp}(m)}$. This shows that the map \eqref{map} 
is surjective, i.e. the images of $F_I$ generate $A^c$. 

\medskip
Any monomial ideal has a resolution whose first step is
given by the lcm of generators, the Taylor's resolution of \cite[17.11]{Eisenbud}. 
Thus, it suffices to show that for any $I$, $J$
with $\sigma_I^\circ,\sigma_J^\circ\subseteq C^\circ$ the corresponding
relation
$$
(\prod_{i\in I\backslash J} D_i)F_J-
(\prod_{i\in J\backslash I} D_i)F_I
$$
lies in the module generated by \eqref{1} and \eqref{2}. 
If $I\cup J\not\in\Sigma$, then it is easy to see that both terms lie in the module generated by \eqref{2}.
Otherwise, all of the intermediate sets $J\subseteq I_1\subseteq I\cup J$ have the property 
$\sigma_{I_1}^\circ\subseteq C^\circ$, and the first  term is equal to $F_{I\cup J}$ modulo \eqref{1}.
The same is true for the second term, which finishes the proof.
\end{proof}

\begin{remark}
We will now observe that $A^c$ is a dualizing module for $A$. Recall that $A$ is Cohen-Macaulay,
a fact that goes back to the works of Reisner, Stanley and Hochster (but see 
\cite[1.3]{Brion} for an approach closer to the spirit of this paper). 
Let $\partial \Sigma$ denote the subfan as well as the simplicial
subcomplex of $\Sigma$ consisting of sub-cones/simplices supported on the 
boundary $\partial C.$ Topologically, the support of the simplex 
$\partial \Sigma$ is a sphere, so
a result of Munkres \cite[Theorem 2.1]{Munkres} shows that it is
Gorenstein over $\CC.$ According to a theorem of Hochster
\cite[Theorem 5.7.2]{BruHer}, this means that
the dualizing module of the Stanley--Reisner 
ring $A$ is isomorphic as a $\ZZ^n$-graded $A$--module
to the graded ideal of $A$ generated
by $\prod_{i \in I} D_i$ with $I \in \Sigma \setminus \partial \Sigma.$
But these are exactly the simplices $I \in \Sigma$ such that
$\sigma_I^\circ\subseteq C^\circ,$ and the proof of 
the previous proposition provides the claim.
\end{remark}

\medskip

Consider the grading on $A$ and $A^c$ with $\deg D_i=1$ and $\deg F_I = \vert I\vert$.
Global linear functions on $N$ form a dimension $\rk N$ subspace in the degree one component of $A$. 
They are given in terms of the generators by 
$$
\sum_i (m\cdot v_i)D_i,~m\in N^\dual
$$
and form a regular sequence for any basis of $N^\dual$. Denote by $Z$ the ideal they generate. Then $A/ZA$ is a graded Artinian ring,
and $A^c/ZA^c$ is its dualizing module. Thus, $A^c/ZA^c$ has one-dimensional socle and the pairing
induced by multiplication and evaluation at the socle is nondegenerate. 
We can explicitly calculate it as follows.
\begin{proposition}\label{int}
For each $I\in\Sigma$ with $\vert I\vert =\rk N$ we define $\Vol_I$  to be the index of the sublattice in $N$ generated
by $v_i,i\in I$.
There exists a unique linear function  $\int :A^c/ZA^c\to \CC$ that takes values $\frac 1{\Vol_{I}}$ on $F_I$ for all $I\in \Sigma$ with $\vert I\vert =\rk N$.
This function is nonzero on the socle element of $A^c/ZA^c$ and induces the pairing.
\end{proposition}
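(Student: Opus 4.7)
\begin{proof*}
My plan is to establish consistency of the assignment $F_I\mapsto 1/\Vol_I$ on the top-degree piece of $A^c/ZA^c$, which by the remark preceding the proposition is the one-dimensional socle. It suffices to prove $\Vol_I F_I=\Vol_J F_J$ in $A^c/ZA^c$ whenever $I$ and $J$ are adjacent top simplices, since the dual graph of top simplices of a simplicial subdivision of a convex cone is connected. Granted this, $\int$ is then the unique linear functional of pure degree $-\rk N$ sending the common element $\Vol_I F_I$ to $1$; the top-degree piece being one-dimensional forces uniqueness, the value $1/\Vol_I\neq 0$ gives non-vanishing on the socle, and by construction the pairing $(a,b)\mapsto\int(ab)$ is the socle pairing recalled before the proposition.

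The core adjacency computation goes as follows. Write $I=I'\cup\{a\}$, $J=I'\cup\{b\}$ with $I'=I\cap J$ of size $\rk N-1$; the cone $\sigma_{I'}$ is interior to $C$ because it is a face of two top simplices. Choose a generator $m_0\in N^\dual$ of the rank-one annihilator of $\ZZ\langle I'\rangle$. The element $z_{m_0}=\sum_i(m_0\cdot v_i)D_i$ lies in $Z$, so $z_{m_0}\cdot F_{I'}\in Z\cdot A^c$. Terms with $v_i\in I'$ drop out by the choice of $m_0$, while terms with $i\notin I'\cup\{a,b\}$ drop out by relation \eqref{2} since a codimension-one cone of a simplicial fan is the face of at most two top cones. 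What remains is
\begin{equation*}
(m_0\cdot v_a)F_I+(m_0\cdot v_b)F_J\equiv 0\pmod{Z\cdot A^c}.
\end{equation*}

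A short lattice calculation performed in a basis of $N$ whose first $\rk N-1$ vectors span $N\cap\mathrm{span}(I')$ yields $\Vol_I=|m_0\cdot v_a|\cdot[N\cap\mathrm{span}(I'):\ZZ\langle I'\rangle]$, and an analogous identity for $\Vol_J$ with $v_b$ replacing $v_a$, so the ratio of absolute values equals $\Vol_I/\Vol_J$. Convexity of $C$ forces $v_a$ and $v_b$ to lie on opposite sides of $\mathrm{span}(I')$, so $m_0\cdot v_a$ and $m_0\cdot v_b$ have opposite signs, and the displayed relation rearranges to $\Vol_I F_I=\Vol_J F_J$ as required. I expect the main difficulty to be precisely this bookkeeping: confirming both that the absolute values are in the ratio $\Vol_I:\Vol_J$ and that the signs conspire so that the symmetric combination $\Vol_I F_I$, rather than a skew combination, is the invariant; everything else is then formal.
\end{proof*}
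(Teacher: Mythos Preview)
Your proof is correct and takes a genuinely different route from the paper's. The paper constructs $\int$ by an explicit formula: for $f=(f_J)\in A^c$ it forms the rational function
\[
\sum_{|J|=\rk N}\frac{1}{\Vol_J}\,f_J\prod_{i\in J}\frac{1}{u_{i,J}}
\]
on $N_\CC$ (where $u_{i,J}$ is the linear function dual to $v_i$ on the cone $J$), argues that the first-order poles along interior walls cancel in pairs so that the sum is actually polynomial, and then evaluates at the origin. A scaling argument shows the result vanishes off the top degree, and a direct check gives $\int F_I=1/\Vol_I$. You instead work entirely inside $A^c/ZA^c$: multiplying $F_{I'}$ by the linear relation orthogonal to the wall $I'$ yields $\Vol_I F_I=\Vol_J F_J$ directly, and connectivity of the dual graph finishes.

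Your argument is more elementary---no rational functions, no pole cancellation---and makes the wall-crossing identity $\Vol_I F_I=\Vol_J F_J$ completely transparent. The paper's approach, however, produces a closed formula for $\int f$ valid for \emph{any} $f\in A^c$, not just the top generators $F_I$; this formula (labeled \eqref{this} in the paper) is precisely what gets invoked later in the proof of Proposition~\ref{hrr}, so the detour through rational functions is not wasted effort. One small remark: your uniqueness claim tacitly restricts to functionals homogeneous of degree $-\rk N$, which the proposition does not literally demand; but the paper's proof has the same feature (it shows $\int$ vanishes off top degree rather than deriving this from the hypotheses), so this is a looseness in the proposition's wording rather than a defect in your argument.
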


\begin{proof}
The function $\int$ is defined as follows, see \cite{Brion0}. For a continuous piecewise polynomial function $f=(f_J)$ consider a rational 
function on $N_\CC$ given by 
\begin{equation}\label{this}
\sum_{\vert J\vert = \rk N} \frac 1{\vert {\rm Vol}(J)\vert} f_J \prod_{i\in J} \frac 1{u_{i,J}}
\end{equation}
where $u_{i,J}$ is a linear function on $N$ with value $1$ at $v_i$ and $0$ on other $v_j\in J$ 
(in other words, the restriction of $D_i$ to the cone $J$).
The rational function \eqref{this} is in fact polynomial. Indeed, the singularities of individual terms occur along the interior walls of $\Sigma$. However,
for every such wall the two adjacent terms cancel each other. Thus the resulting function has singularities at (complex) codimension two 
only and is therefore nonsingular.
Then 
$$\int f =\left( \sum_{\vert J\vert = \rk N} \frac 1{\vert {\rm Vol}(J)\vert} f_J \prod_{i\in J} \frac 1{u_{i,J}} \right)(0)
$$
provides a map $\int:A^c \to \CC$. 

\medskip

We will now verify the  properties of $\int$. The above argument shows that for a global polynomial function $g$ we have 
$$
\int fg = g(0)\int f.
$$
In particular, $\int$ passes through to the map $A^c/ZA^c\to \CC$.
We can also easily see that $\int$ vanishes on all but the top degree component of $A^c/ZA^c$.
Indeed, for $\lambda\in \CC^*$ consider the scaling $\lambda: N_\CC\to N_\CC$. We have
$$
\int f\circ \lambda = \lambda^{\rk N} \int f
$$
because of the scaling of $u_{i,j}$ in \eqref{this}. Thus $\int f =0$ for $f$ in all but the top eigenspace
of $\CC^*$ action. It remains to observe that for $\vert I\vert=\rk N$,  $\int F_I = \frac 1{\vert {\rm Vol}(I)\vert} $ because the function in \eqref{this} is constant.
\end{proof}

\section{$K$-theory}\label{sec:K}
In this section we consider more sophisticated combinatorial objects associated to 
$C$, $N$ and $\Sigma$. The first one is the Grothendieck group of the corresponding toric
DM stack $\PP_\Sigma$. The second one is new and will be later shown to be some
kind of Grothendieck group with compact support. At the end of the section we 
will introduce an example, to which we will be returning frequently throughout the rest of the paper.

\medskip

We start with what is by now a standard definition of the toric stack associated to $C$, $N$, $v_i$ 
and $\Sigma$, see \cite{BCS}. 
\begin{definition}
Consider the open subset $U$ of $\CC^n$ given by 
$$
U := \{(z_1,\ldots,z_n),~{\rm such~that~} \{i, z_i=0\}\in \Sigma\}.
$$
Define the group $G$ as the subgroup of $(\CC^*)^n$ given by
$$
G=\{(\lambda_1,\ldots,\lambda_n),~{\rm such~that~}\prod_{i=1}^n \lambda_i^{\langle m,v_i\rangle}=1{\rm ~for~all~}m\in N^\dual\}.
$$
Define $\PP_\Sigma$ to be the \emph{stack} quotient of $U$ by $G$.
\end{definition}

\begin{remark}
The notation $\PP_\Sigma$ is often used  to denote the GIT quotient $U/G$ which 
is typically a singular toric variety. However,
the reader can be assured that we always work on the smooth toric DM stack.
\end{remark}

\medskip

The Grothendieck group of $\PP_\Sigma$ (equivalently, the Grothendieck group of 
$G$-equivariant coherent sheaves on $U$) has been calculated in 
\cite{BH1}.
\begin{proposition}
Let $C$, $v_i$ and $\Sigma$ be as before. Consider the smooth toric DM stack $\PP_\Sigma$  defined by $\Sigma$
and $v_i$. Then the $K$-theory  $K_0(\PP_\Sigma)$ (with complex coefficients) is the quotient of the ring $\CC[R_i^{\pm 1}]$ by the 
relations
\begin{equation}\label{Kthry}
\prod_{i=1}^n R_i^{m\cdot v_i} - 1,~m\in N^\dual {\emph ~~and~~~}~
\prod_{i\in I}(1-R_i),~ I\not\in\Sigma.
\end{equation}
\end{proposition}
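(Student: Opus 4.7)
My plan is to identify $K_0(\PP_\Sigma)\otimes\CC$ with the $G$-equivariant Grothendieck group $K_0^G(U)\otimes\CC$, compute $K_0^G(\CC^n)$ by homotopy invariance, and then pass to $U$ via the localization sequence, excising the coordinate subspaces indexed by non-faces of $\Sigma$.

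First, since $\PP_\Sigma=[U/G]$ is a smooth quotient stack, equivariant descent identifies $K_0(\PP_\Sigma)$ with $K_0^G(U)$. A $G$-invariant retraction $\CC^n\to\{0\}$ by scaling gives $K_0^G(\CC^n)=R(G)$, the representation ring. The group $G\subset(\CC^*)^n$ is cut out by $\prod\lambda_i^{m\cdot v_i}=1$ for $m\in N^\dual$, so its character group is $\ZZ^n/L$ where $L\subset\ZZ^n$ is the image of $N^\dual\to\ZZ^n$, $m\mapsto(m\cdot v_i)_i$. Writing $R_i$ for the class of the $i$-th character, passage to complex coefficients yields
$$R(G)\otimes\CC=\CC[R_1^{\pm 1},\ldots,R_n^{\pm 1}]\Big/\Big\langle \prod_{i=1}^n R_i^{m\cdot v_i}-1 \,:\, m\in N^\dual\Big\rangle.$$

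Second, the complement $Z:=\CC^n\setminus U$ is the union of the coordinate subspaces $V_I:=V(z_i:i\in I)$ as $I$ ranges over (minimal) non-faces of $\Sigma$. The right-exact localization sequence for Grothendieck groups of coherent sheaves
$$K_0^G(Z)\longrightarrow K_0^G(\CC^n)\longrightarrow K_0^G(U)\longrightarrow 0,$$
combined with the $G$-equivariant Koszul resolution of $\OO_{V_I}$, gives $[\OO_{V_I}]=\prod_{i\in I}(1-R_i)$ in $R(G)$. Every equivariant coherent sheaf on $V_I$ admits a filtration whose successive quotients are character twists of $\OO_{V_I}$, so the image of $K_0^G(Z)\to R(G)$ is precisely the ideal generated by these Koszul classes for $I\notin\Sigma$. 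Combining the two computations yields the claimed presentation.

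The cleanest parts are the representation-ring identification of $K_0^G(\CC^n)$ and the formal use of the localization sequence. The main technical step is fixing the sign convention: since $G$ acts on the coordinate function $z_i$ with weight $R_i^{-1}$, the ideal sheaf $(z_i)\subset\OO_{\CC^n}$ is $G$-equivariantly isomorphic to $\OO_{\CC^n}\otimes\CC_{R_i}$, producing $1-R_i$ rather than $1-R_i^{-1}$ in the Koszul class for $[\OO_{V_I}]$. The role of complex coefficients is technical uniformity with the analytic and categorical framework used in the rest of the paper, and does not affect the algebraic structure of the argument.
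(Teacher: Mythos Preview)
The paper does not prove this proposition; it simply cites it from \cite{BH1}. Your argument via equivariant descent, homotopy invariance $K_0^G(\CC^n)\cong R(G)$, and the localization sequence with Koszul classes is exactly the standard proof and is essentially what appears in \cite{BH1} and \cite{BH}. The structure is correct and nothing is missing.

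One small comment on the sign discussion at the end: with your convention that the coordinate function $z_i$ has $G$-weight $R_i^{-1}$, the ideal sheaf $(z_i)$ is equivariantly $\OO_{\CC^n}\otimes\CC_{R_i^{-1}}$ (the generator $z_i$ carries weight $R_i^{-1}$), so the Koszul class of $[\OO_{V_I}]$ comes out as $\prod_{i\in I}(1-R_i^{-1})$ rather than $\prod_{i\in I}(1-R_i)$. This is immaterial for the presentation, since $1-R_i^{-1}=-R_i^{-1}(1-R_i)$ differs by a unit and the first family of relations is invariant under $m\mapsto -m$; but the justification you wrote for getting $1-R_i$ directly is not quite right. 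The paper itself uses $(1-R_i^{-1})$ when building $K_0^c(\PP_\Sigma)$ from structure sheaves of closed strata, which is consistent with this.
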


\medskip

We will now describe the structure of $K_0(\PP_\Sigma)$ in more detail in combinatorial terms.
The ring $K_0(\PP_\Sigma)$ is semilocal, with the local summands given by so-called twisted sectors, see the
definition below. 
\begin{definition}
For every cone $\sigma\in\Sigma$ we denote by ${\rm Box}(\Sigma)$ the set of points of $\gamma\in N$ that can be 
written as $\gamma=\sum_{i\in\sigma} \gamma_iv_i$ with $0\leq \gamma_i < 1$. We denote by ${\rm Box}(\Sigma)$ 
the union of ${\rm Box}(\sigma)$ for all $\sigma\in\Sigma$. 
\end{definition}

The elements of ${\rm Box}(\Sigma)$ are in one-to-one
correspondence with components of inertia stack of $\PP_\Sigma$. In the following definition
we will introduce the cohomology of these components.

\begin{definition}
To each $\gamma\in {\rm Box}(\Sigma)$ we associate a toric 
stack called {\it the twisted sector} defined as the closed toric substack associated to the minimum cone in $\Sigma$ 
that contains $\gamma$. We denote the corresponding rings (ideals) of piecewise polynomial functions (vanishing 
on the boundary) by $A_\gamma$ ($A_\gamma^c$). We denote the corresponding Artinian rings and modules by $H_\gamma $ and $H_\gamma^c$ respectively.
\end{definition}

\begin{remark}
More specifically, for $\gamma\in {\rm Box}(\Sigma)$ we define $\sigma(\gamma)$ to be the minimum
cone of $\Sigma$ that contains $\gamma$. We define $N_\gamma = N/{\rm Span}(\sigma(\gamma))$
to be quotient lattice and $\Sigma_\gamma$ be the quotient fan
\footnote{More accurately, one may consider the quotient by the span of $v_i$ in $\sigma(\gamma)$.
The resulting lattice has torsion and leads to toric DM stacks which are not schemes at generic point,
see \cite{BCS}. This leads to various factors $\frac 1{\vert {\rm Box}(\sigma(\gamma))\vert}$
in the formulas of the paper.}. It is the image of the star of $\sigma(\gamma)$ and consists of $I-\sigma(\gamma)$ for all $I$ in $\Sigma$ with  $I\supseteq \sigma(\gamma)$. The map 
from $N$ to $N_\gamma$ is denoted by\, $\bar {}$. We also use $\bar D_i$ and $\bar F_I$ 
for the corresponding elements of $A_\gamma, H_\gamma, A^c_\gamma, H^c_\gamma$.
\end{remark}

\medskip

The key to understanding $K_0(\PP_\Sigma)$ is provided by the following result.
\begin{proposition}\cite{BH}\label{prop:ch}
There is a natural algebra isomorphism
$$ch: K_0(\PP_\Sigma)\iso  \bigoplus_{\gamma\in {\rm Box}(\Sigma)} H_\gamma.$$
\end{proposition}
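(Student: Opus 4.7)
The plan is to construct $ch$ summand-by-summand via a twisted exponential formula, verify that it respects both families of K-theory relations \eqref{Kthry}, and then establish bijectivity by identifying the maximal spectrum of $K_0(\PP_\Sigma)\otimes\CC$ with ${\rm Box}(\Sigma)$. For each $\gamma\in{\rm Box}(\Sigma)$, write $\gamma=\sum_{i\in\sigma(\gamma)}\gamma_iv_i$ with $0<\gamma_i<1$ (strict because $\sigma(\gamma)$ is the minimal cone containing $\gamma$), and extend by $\gamma_i:=0$ for $i\notin\sigma(\gamma)$. Define the $\gamma$-component of $ch$ on generators by
$$ch_\gamma(R_i) \;=\; \ee^{2\pi\ii\gamma_i}\exp(\bar D_i)\in H_\gamma,$$
where $\bar D_i\in H_\gamma$ is the image of the Courant divisor $D_i$: for $i$ a ray of $\Sigma_\gamma$, the usual generator; otherwise, the nilpotent element uniquely determined by the defining linear relations $\sum_j(m\cdot v_j)\bar D_j=0$ for $m\in N^\dual$.

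\medskip

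For well-definedness, the linear relation maps to
$$\prod_i\bigl(\ee^{2\pi\ii\gamma_i}\exp(\bar D_i)\bigr)^{m\cdot v_i}\;=\;\ee^{2\pi\ii\,m\cdot\gamma}\exp\Bigl(\sum_i(m\cdot v_i)\bar D_i\Bigr)\;=\;1,$$
since $m\cdot\gamma\in\ZZ$ (as $\gamma\in N$) and the exponent vanishes in $H_\gamma$ by construction. For $\prod_{i\in I}(1-R_i)$ with $I\notin\Sigma$, the image factors as
$$\prod_{i\in I\cap\sigma(\gamma)}\bigl(1-\ee^{2\pi\ii\gamma_i}\exp(\bar D_i)\bigr)\cdot\prod_{i\in I\setminus\sigma(\gamma)}\bigl(1-\exp(\bar D_i)\bigr).$$
Each factor in the first product is a unit of $H_\gamma$ because $\bar D_i$ is nilpotent and $\ee^{2\pi\ii\gamma_i}\ne 1$; each factor in the second product equals $-\bar D_i$ times a unit. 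Since $I\notin\Sigma$ implies $I\cup\sigma(\gamma)\notin\Sigma$, the set $I\setminus\sigma(\gamma)$ is not a cone of $\Sigma_\gamma$, so $\prod_{i\in I\setminus\sigma(\gamma)}\bar D_i=0$ in $H_\gamma$ by Stanley--Reisner. The map $ch$ is therefore a well-defined ring homomorphism.

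\medskip

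For bijectivity, I would first identify the characters $K_0(\PP_\Sigma)\otimes\CC\to\CC$ with ${\rm Box}(\Sigma)$: such a character sends $R_i\mapsto\lambda_i$ where the linear relations force $(\lambda_i)$ to lift to a homomorphism $N\to\CC^*$ and the Stanley--Reisner relations force $\{i:\lambda_i\ne 1\}\in\Sigma$; writing $\lambda_i=\ee^{2\pi\ii\gamma_i}$ with $\gamma_i\in[0,1)$ produces a unique $\gamma=\sum\gamma_iv_i\in{\rm Box}(\Sigma)$. Hence $K_0(\PP_\Sigma)\otimes\CC$ splits as a product of local Artinian algebras indexed by ${\rm Box}(\Sigma)$, and $ch_\gamma$ lands in the local algebra at the corresponding maximal ideal.

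\medskip

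The main obstacle is to show that each local factor maps \emph{isomorphically} onto $H_\gamma$ rather than only surjectively. To finish, I would substitute $R_i=\ee^{2\pi\ii\gamma_i}(1+D_i')$ in the completion at $\mathfrak m_\gamma$: for $i\in\sigma(\gamma)$ the element $1-R_i$ is a unit, so the Stanley--Reisner relations reduce modulo units to the SR relations of $\Sigma_\gamma$ on the $D_i'$ for $i\notin\sigma(\gamma)$; the linear relations, expanded as power series, read $\sum_i(m\cdot v_i)D_i'+O((D')^2)=0$ and can be solved to eliminate the $D_i'$ with $i\in\sigma(\gamma)$, while the residual relations on the remaining $D_j'$ match the defining linear relations of $H_\gamma$ at leading order. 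Combined with the dimension identity $\sum_\gamma\dim H_\gamma=\dim K_0(\PP_\Sigma)$ arising from the orbifold Chen--Ruan decomposition of a toric DM stack, this upgrades the resulting surjection to an isomorphism.
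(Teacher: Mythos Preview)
The paper does not prove this proposition; it is cited from \cite{BH}, and only the construction of $ch$ is recalled in \eqref{ch}. Your construction is the same one, packaged in the uniform formula $ch_\gamma(R_i)=\ee^{2\pi\ii\gamma_i}\ee^{\bar D_i}$, and your verification of the two families of relations together with the identification of the maximal spectrum with ${\rm Box}(\Sigma)$ goes beyond what the paper spells out here.

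There is one genuine imprecision in your definition. For indices $i\notin{\rm Star}(\sigma(\gamma))$ with $i\notin\sigma(\gamma)$, you claim that $\bar D_i$ is ``uniquely determined by the defining linear relations $\sum_j(m\cdot v_j)\bar D_j=0$ for $m\in N^\dual$.'' But there are only $\rk N$ such relations; of these, $\rk N_\gamma$ already hold in $H_\gamma$ among the rays of $\Sigma_\gamma$, and the remaining $|\sigma(\gamma)|$ relations suffice to solve for $\bar D_i$ with $i\in\sigma(\gamma)$ (recovering exactly the third line of \eqref{ch}). They do \emph{not} determine $\bar D_i$ for $i$ outside the star. The paper's construction simply sets $ch_\gamma(R_i)=1$, i.e.\ $\bar D_i=0$, for such $i$ (first line of \eqref{ch} and the remark following it). Once you adopt this, your Stanley--Reisner check goes through: if some $i\in I$ lies outside the star then $1-ch_\gamma(R_i)=0$ kills the product, and otherwise $I\subseteq{\rm Star}(\sigma(\gamma))$ so your argument that $I\setminus\sigma(\gamma)\notin\Sigma_\gamma$ applies. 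Your appeal to the Chen--Ruan dimension count for injectivity is acceptable but somewhat external; the original argument in \cite{BH} establishes the local isomorphism directly by the change of variables you sketch.
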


Let us recall the construction of this isomorphism $ch$, since it will be useful later. One can show that the algebra homomorphisms
from $K_0(\PP_\Sigma)$ to $\CC$ correspond to $\gamma=\sum_{i\in\sigma(\gamma)}\gamma_i v_i$ in ${\rm Box}(\Sigma)$ by
$R_i\mapsto \ee^{2\pi\ii \gamma_i}$. The cohomology 
$H_\gamma=A_\gamma/Z_\gamma A_\gamma$ of the corresponding twisted sector is generated by $\bar D_i$ for $i\in {\rm Star}(\sigma(\gamma))-\sigma(\gamma)$ with the 
relations 
$$
\prod_{i\in J} \bar D_i = 0
$$
for $J$ not in a cone in $ {\rm Star}(\sigma(\gamma)),$ and
$$\sum_{i\in {\rm Star}(\sigma(\gamma))-\sigma(\gamma)} (m\cdot v_i) \bar D_i=0$$
for $m\in {\rm Ann}(v_i,i\in \sigma(\gamma))$.
The projection ${\rm ch}_\gamma:K_0(\PP_\Sigma)\to A_\gamma/Z_\gamma A_\gamma$ is given by 
\begin{equation}\label{ch}
\begin{array}{ll}
{\rm ch}_\gamma(R_i) = 1, &i\not\in  {\rm Star}(\sigma(\gamma))\\
{\rm ch}_\gamma(R_i) = \ee^{\bar D_i},& i\in {\rm Star}(\sigma(\gamma))-\sigma(\gamma)\\
{\rm ch}_\gamma(R_i) = \ee^{2\pi\ii \gamma_i }\prod_{j\not\in \sigma(\gamma)} {\rm ch}_\gamma(R_j)^{(m_i\cdot v_j)},& i\in \sigma(\gamma)
\end{array}
\end{equation}
where $m_i$ is any $\QQ$-valued linear function on $N$ which takes values $-1$ on $v_i$ and $0$ on all other $v_j, j\in \sigma(\gamma)$. 
Here, in the last line, the rational powers of unipotent elements are well-defined.

\begin{remark} 
The first line in \eqref{ch} can be thought of as a particular case of the second line, under $\bar D_i=0$ for $i$ not in the 
induced fan. 
\end{remark}

\medskip

We now define a module $K_0^c(\PP_\Sigma)$ over $K_0(\PP_\Sigma)$ which will turn out to be isomorphic to the sum of cohomologies with compact support 
for all of the twisted sectors. 
\begin{definition}
Consider the module $K_0^c(\PP_\Sigma)$ over the ring $K_0(\PP_\Sigma)$ generated by $G_I$, $I\in\Sigma,
\sigma_I^\circ \subseteq C^\circ$, with relations for all $i$, $I$ with $i\not\in I$
\begin{equation}\label{K1}
(1-R_i^{-1})G_I = G_{I\cup \{i\}},~{\rm if~} I\cup \{i\}\in\Sigma,
\end{equation}
\begin{equation}\label{K2}
(1-R_i^{-1})G_I = 0, ~{\rm if~} I\cup \{i\}\notin\Sigma.
\end{equation}
\end{definition}

\begin{remark} We will later see that $K^c_0(\PP_\Sigma)$ is isomorphic to the Grothendieck group  
of a certain category of "compactly supported" sheaves on $\PP_\Sigma$, which 
will justify the notation.
\end{remark}

\begin{proposition}\label{prop:chc}
There is a natural isomorphism
$$ch^c:K_0^c(\PP_\Sigma)\iso
\bigoplus_{\gamma\in{\rm Box}(\Sigma) }H_\gamma^c$$
 compatible with the ring isomorphism $ch:K_0(\PP_\Sigma)\iso
\bigoplus_{\gamma\in{\rm Box}(\Sigma) }H_\gamma$.
\end{proposition}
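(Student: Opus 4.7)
For each $\gamma\in{\rm Box}(\Sigma)$, I would define the $\gamma$-component $ch^c_\gamma:K^c_0(\PP_\Sigma)\to H^c_\gamma$ on generators by
$$
ch^c_\gamma(G_I)=\begin{cases}\lambda_\gamma(I)\,\bar F_{I\setminus\sigma(\gamma)}&\text{if }I\cup\sigma(\gamma)\in\Sigma,\\ 0&\text{otherwise,}\end{cases}
$$
where
$$
\lambda_\gamma(I)=\prod_{i\in I\setminus\sigma(\gamma)}\frac{1-\ee^{-\bar D_i}}{\bar D_i}\,\cdot\prod_{i\in I\cap\sigma(\gamma)}\bigl(1-ch_\gamma(R_i^{-1})\bigr).
$$
Each factor is a unit in $H_\gamma$: in the first product $\bar D_i$ is nilpotent and $(1-\ee^{-x})/x$ has constant term $1$; in the second, $0<\gamma_i<1$ forces the scalar part $1-\ee^{-2\pi\ii\gamma_i}$ to be non-zero. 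Hence $\lambda_\gamma(I)$ itself is a unit.

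I would verify the defining relations \eqref{K1} and \eqref{K2} by case-splitting on the position of $i$. When $i\notin{\rm Star}(\sigma(\gamma))$ we have $ch_\gamma(R_i^{-1})=1$, so both sides vanish, and one checks $I\cup\{i\}\cup\sigma(\gamma)\notin\Sigma$ as well. When $i\in{\rm Star}(\sigma(\gamma))\setminus\sigma(\gamma)$, the factorization $(1-\ee^{-\bar D_i})=\bar D_i\cdot\frac{1-\ee^{-\bar D_i}}{\bar D_i}$ together with $\bar D_i\bar F_J=\bar F_{J\cup\{i\}}$ (or zero when $J\cup\{i\}\notin\Sigma_\gamma$) in $A^c_\gamma$ matches both \eqref{K1} and \eqref{K2} against the recursion $\lambda_\gamma(I\cup\{i\})=\lambda_\gamma(I)\cdot\frac{1-\ee^{-\bar D_i}}{\bar D_i}$. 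When $i\in\sigma(\gamma)$, adding $i$ to $I$ does not alter $I\setminus\sigma(\gamma)$, and the update of $\lambda_\gamma$ by the unit $(1-ch_\gamma(R_i^{-1}))$ gives exactly \eqref{K1}; the \eqref{K2} subcase cannot arise because $I\cup\{i\}\subseteq I\cup\sigma(\gamma)\in\Sigma$. Assembling the sectors produces a $K_0(\PP_\Sigma)$-module homomorphism $ch^c$ compatible with $ch$.

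It remains to show $ch^c$ is bijective. Let $e_\gamma\in K_0(\PP_\Sigma)$ be the idempotent projecting onto the $\gamma$-summand supplied by Proposition \ref{prop:ch}. For surjectivity, given any generator $\bar F_J$ of $H^c_\gamma$, the choice $I=J$ gives $ch^c(e_\gamma G_J)=(0,\ldots,\lambda_\gamma(J)\bar F_J,\ldots,0)$, and multiplying by a $K_0$-lift of $\lambda_\gamma(J)^{-1}\in H_\gamma$ yields $\bar F_J$ in the $\gamma$-slot. For injectivity, which I expect to be the main obstacle, I would prove that $e_\gamma G_I=0$ in $K_0^c(\PP_\Sigma)$ whenever $I\cup\sigma(\gamma)\notin\Sigma$. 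Enumerate $\sigma(\gamma)\setminus I=\{i_1,\ldots,i_r\}$ and set $I_k=I\cup\{i_1,\ldots,i_k\}$; since each $(1-R_{i_k}^{-1})$ acts on $e_\gamma K_0^c(\PP_\Sigma)$ as the unit $(1-ch_\gamma(R_{i_k}^{-1}))\in H_\gamma$, relations \eqref{K1} and \eqref{K2} express $e_\gamma G_{I_{k-1}}$ either as a unit multiple of $e_\gamma G_{I_k}$ (when $I_k\in\Sigma$) or as zero (when $I_k\notin\Sigma$). Because $I_r=I\cup\sigma(\gamma)\notin\Sigma$ by assumption, some step triggers the second alternative and propagates back to give $e_\gamma G_I=0$. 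Once this vanishing is established, the substitution $e_\gamma G_I\mapsto\lambda_\gamma(I)\bar F_{I\setminus\sigma(\gamma)}$ identifies the presentation of $e_\gamma K^c_0(\PP_\Sigma)$ with the Stanley--Reisner presentation of $H^c_\gamma$ from Section \ref{sec:SR}, completing the proof.
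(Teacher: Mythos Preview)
Your proof follows essentially the same route as the paper's: the formula for $ch^c_\gamma$, the case analysis on the position of $i$ relative to $\sigma(\gamma)$ and ${\rm Star}(\sigma(\gamma))$ to verify \eqref{K1} and \eqref{K2}, and the passage to the $\gamma$-summand (you via idempotents $e_\gamma$, the paper via localization at the corresponding maximal ideal) with the subsequent reduction of generators and matching of presentations are all the same argument in slightly different language.

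There is one genuine slip in your surjectivity step. Given a generator $\bar F_J$ of $H^c_\gamma$ (so $J\in\Sigma_\gamma$ with $\sigma_{\bar J}^\circ\subseteq C_\gamma^\circ$), you set $I=J$ and use $G_J$. But $G_J$ is only a generator of $K_0^c(\PP_\Sigma)$ when $\sigma_J^\circ\subseteq C^\circ$, and this can fail: for instance, if $\gamma$ lies in $C^\circ$ then $C_\gamma=N_\gamma$ has empty boundary, so every $J\in\Sigma_\gamma$ qualifies, yet $J$ could be a single boundary ray of $C$. The fix is to take $I=J\cup\sigma(\gamma)$ instead; then a point of $\sigma_I^\circ$ is a positive combination of all $v_i$ with $i\in J\cup\sigma(\gamma)$, and if it lay in a proper face $F$ of $C$ then $F$ would contain $\sigma(\gamma)$, whence $\bar F$ would be a proper face of $C_\gamma$ containing $\sigma_{\bar J}^\circ$, a contradiction. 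With this correction (which also makes the generator sets match bijectively after the reduction), your final presentation-matching goes through and in fact subsumes the separate surjectivity argument.
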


\begin{proof}
We define $ch^c=\bigoplus_{\gamma}ch_\gamma^c$
where $ch_\gamma^c:K_0^c(\PP_\Sigma)\to H_\gamma^c$
is given by
$$
ch_v^c(\prod_{i=1}^n R_i^{l_i}G_I)=0$$
for $I\not\subseteq {\rm Star}(\sigma(v))$
and 
\begin{equation}\label{chcgamma}
\begin{array}{ll}
ch_\gamma^c(\prod_{i=1}^n R_i^{l_i}G_I)=&
 \prod_{i=1}^n ch_\gamma(R_i)^{l_i} \prod_{i\in I, i\not\in\sigma(\gamma)}\Big( \frac {1-e^{- \bar D_i}}{\bar D_i}\Big) \\
 &
 \cdot \prod_{i\in I\cap\sigma(\gamma)}(1-ch_\gamma(R_i)^{-1})\bar F_{\bar I} 
\end{array}
\end{equation}
for $I\subseteq {\rm Star}(\sigma(\gamma))$. The cone $\bar I$ 
in the induced fan is defined by the set of indices in $I$, but not in $\sigma(\gamma)$. 
The $\bar F_{\bar I}$ indicates the generator of $H_v^c$   that corresponds 
to $\bar I$ in the induced fan $\Sigma_\gamma$.

\medskip

We need to prove that the above $ch^c$ satisfies the claim of the proposition.
First of all, we need to prove that $ch_\gamma^c$ is well-defined for any $\gamma\in{\rm Box}(\Sigma)$.
Because $ch_\gamma:K_0(\PP_\Sigma)\to H_\gamma$ is a ring homomorphism, we only need to check
that the above prescription agrees on two sides of  \eqref{K1} and \eqref{K2}. 
If $I\not\subseteq {\rm Star}(\sigma(\gamma))$,
then both sides map to $0$. If $I\subseteq{\rm Star}(\sigma(\gamma))$, but 
$I\cup\{i\}\not\subseteq {\rm Star}(\sigma(\gamma))$, then the right hand side is always sent to $0$ by the 
above prescription. In this case $i\not\in {\rm Star}(\sigma(\gamma))$, thus $ch_\gamma(R_i)^{-1}=1,$ so the 
left hand side is trivially sent to $0$. We are left to consider the case $I\cup\{i\}\subseteq {\rm Star}(\sigma(\gamma))$.
If $I\cup\{i\}\not\in\Sigma$, then  $i\not\in\sigma(\gamma)$. The left hand side of \eqref{K2} 
goes to zero because $(1-ch_\gamma(R_i)^{-1}) = (1-\ee^{-\bar D_i}) \sim \bar D_i $ and $\bar D_i{\bar F}_{\bar I} =0$
in $H_\gamma^c$. If $I\cup\{i\}\in\Sigma$ (and in ${\rm Star}(\sigma(\gamma))$), then
there are two possibilities. If $i\not\in\sigma(v),$
the statement follows from $\bar D_i{\bar F}_{\bar I} = \bar F_{\overline {I\cup\{i\}}}$. If $i\in \sigma(\gamma)$, then
$\bar I = \overline{I\cup\{i\}}$ and the statement is immediate. 

\medskip

To show that the map $ch^c$ is an isomorphism, observe that every finitely generated module 
over an Artinian ring is a direct sum of its localizations at maximal ideals. In our case, the 
maximum ideals correspond to $\gamma\in{\rm Box}(\Sigma)$, so it suffices to show that the
induced map 
$$
ch_\gamma^c :( K_0^c(\PP_\Sigma) )_\gamma\to H_\gamma^c
$$
is an isomorphism.
The localization with respect to this maximal ideal is characterized by the
nilpotency of $R_i-\ee^{2\pi\ii \gamma_i}$ where $\gamma=\sum_{i\in\sigma(\gamma)}\gamma_i v_i,$
and $\gamma_i=0$ for $i\not\in\sigma(\gamma)$.  The localization of $K_0^c(\PP_\Sigma)$ is generated 
as a module over the localization of $K_0(\PP_\Sigma)$ (that we know to be isomorphic to $H_v$) by 
$G_I$ with the relations \eqref{K1} and \eqref{K2}. Observe that if $\lambda_i\neq 0$, then $(1-R_i^{-1})$ is invertible, 
thus \eqref{K2} implies
that $G_I=0$ for $I\not\ni i$. Consequently, we may restrict our attention to $I\subseteq {\rm Star}(\sigma(\gamma))$.
 Note that we are only using $I$
such that $\sigma_I^\circ \subseteq C^\circ$, and the images modulo the 
span of $\sigma(\gamma)$
have this property. Moreover, $(1-R_i^{-1})$ are invertible for $i\in\sigma(\gamma)$, 
which allows
to pass from $I$ to $I-{\rm \sigma(\gamma)}$. It remains to observe that for $I\subseteq {\rm Star}(\sigma(\gamma))-\sigma(\gamma)$
the  relations \eqref{K1} and \eqref{K2} become
the defining relations on $H_\gamma^c$ under $ch_\gamma$ and 
$G_I\to  \prod_{i\in I}\Big( \frac {1-e^{- \bar D_i}}{\bar D_i}\Big) 
\bar F_{\bar I}$.
\end{proof}

\medskip

\begin{remark}
The choice of $ch^c$ is also compatible with the natural maps  $H^c_\gamma\to H_\gamma$
and $K_0^c(\PP_\Sigma)\to K_0(\PP_\Sigma)$. We will not use this fact in the paper,
but it is an important motivation behind the definition of $ch^c$.
\end{remark}

\medskip

As promised, we now introduce the key example that will be featured prominently 
throughout the paper.
\begin{example}\label{keyex}
Let $C\subset \ZZ^2$ be the cone generated by $v_1=(0,1)$, $v_2=(1,1)$ and $v_3=(3,1)$.
Let $\Sigma$ be the simplicial complex with the maximum sets $\{1,2\}$ and $\{2,3\}$.
Let us first construct $\PP_\Sigma$. It is the stack quotient $[U/G]$ where 
$$
U=\{(z_1,z_2,z_3),\,(z_1,z_3)\neq (0,0)\},~~G=\{(\lambda^2,\lambda^{-3},\lambda),\,\lambda\in \CC^*\}.$$

\medskip\noindent
Let us now consider the Grothendieck groups of $\PP_\Sigma$. 
The group $K_0(\PP_\Sigma)$ is written as 
$$
\begin{array}{ll}
&\CC[R_1^{\pm 1},R_2^{\pm 1},R_3^{\pm 1}]/
\langle (1-R_1)(1-R_3), R_1R_2R_3-1,R_2R_3^3-1\rangle
\\
&=\CC[R_3^{\pm 1}]/\langle (1-R_3)^2(1+R_3)\rangle
=\CC\oplus \CC R_3 \oplus \CC R_3^2,
\end{array}
$$
and
$$
K_0^c(\PP_\Sigma)=K_0(\PP_\Sigma)G_2
=\CC G_2\oplus \CC R_3G_2 \oplus \CC R_3^2G_2.
$$

\medskip\noindent
There are two twisted sectors, which correspond to $\gamma=(0,0)$ and 
$\gamma=(2,1)=\frac 12 v_2+\frac 12 v_3$. The $(0,0)$ sector is sometimes called the
untwisted sector. \footnote{By a quirk of terminology, we 
consider the untwisted sector to be one element of the larger set of 
twisted sectors.} We have 
$$
H_{(0,0)} = \CC[D_1,D_2,D_3]/\langle D_1,D_3, D_2+3D_3,D_1+D_2+D_3\rangle
=\CC\oplus \CC D_3
$$
$$
H_{(2,1)} = \CC[\emptyset] = \CC.
$$
The cohomology with compact support $H^c_{(0,0)}$ is generated by $F_2$ and we have 
$$
H^c_{(0,0)}=H_{(0,0)}F_2 = \CC F_2 \oplus \CC D_3 F_2,~~
H^c_{(2,1)}=\CC \bar F_{\emptyset}.
$$
The integration maps $\int$ are given by 
$$
\int_{(0,0)} F_2 = 0,~\int_{(0,0)}D_3F_2 = \frac 12,~\int_{(2,1)}\bar F_{\emptyset} = 1.
$$

\medskip\noindent
The maps $ch$ and $ch^c$ are given by
\begin{equation}\label{chmaps}
\begin{array}{l}
ch(1) = 1\oplus 1,~ch(R_3) = (1+D_3)\oplus (-1),~ch(R_3^2) = (1+2D_3)\oplus 1,\\
ch^c(G_2) = (1+\frac 32 D_3)F_2\oplus  2 \bar F_\emptyset,
~
ch^c(R_3G_2) = (1+\frac 52 D_3 )F_2\oplus (-2) \bar F_\emptyset,\\
ch^c(R_3^2G_2) = (1+\frac 72 D_3 )F_2\oplus 2 \bar F_\emptyset.
\end{array}
\end{equation}
\end{example}

\section{Euler characteristics}\label{sec.chi}
In this section we define the pairing between $K_0(\PP_\Sigma)$ and $K_0^c(\PP_\Sigma)$.
It is based on the linear map 
$$\chi:K_0^c(\PP_\Sigma)\to \CC$$
which will later be reinterpreted as the 
Euler characteristic of some sheaves on $\PP_\Sigma$.
We then calculate it for the Example \ref{keyex}.

\medskip

For $\alpha\in \ZZ^n$ and $G_I$ we
define $\chi(\prod_i R_i^{\alpha_i}G_I)$ as follows. Let $M=N^\dual$ be the dual lattice.
Consider the following element in the field of rational functions $\CC(M_\QQ)$
\begin{equation}\label{chieq}
\sum_{\stackrel{J\supseteq I, }{|J|=\rk N}}
\frac 1{\vert {\rm Box}(J)\vert}
\sum_{\stackrel{\gamma \in {\rm Box}(J)}{\gamma=\sum_{j\in J}\gamma_jv_j}}
\prod_{i\in J} q^{-u_{i,J}\alpha_i}\ee^{-2\pi\ii \gamma_i\alpha_i}
\prod_{i\in J-I} \frac 1{1-q^{u_{i,J}}\ee^{2\pi\ii\gamma_i}}
\end{equation}
where $u_{i,J}$ form a dual basis to $v_i, i\in J$. The sum is a polynomial in $\CC[M_\QQ]$,
rather than
a rational function in $\CC[M_\QQ]$. We then evaluate it at $q=0$ to get the 
the value of $\chi$. We will later see that this is an integer.

\medskip
Our first goal is prove the key properties of $\chi$, starting from the fact that it is well-defined.
\begin{proposition}\label{makessense}
The above definition of $\chi(\prod_i R_i^{\alpha_i}G_I)$ makes sense.
\end{proposition}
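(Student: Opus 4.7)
The summands of \eqref{chieq} are individually rational in the variables of $\CC[M_\QQ]$, with poles along the divisors $\{q^{u_{i,J}}\ee^{2\pi\ii\gamma_i}=1\}$ coming from the denominators for $i\in J\setminus I$. The proposition asserts these poles cancel in the total sum, so the result is polynomial and evaluation at $q=0$ makes sense. The approach is a residue-cancellation argument along the lines of Proposition \ref{int}, adapted to the twisted sectors.

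The first step is to catalogue the potential poles. Each factor $1-q^{u_{i,J}}\ee^{2\pi\ii\gamma_i}$ for $i\in J\setminus I$ vanishes along an integer translate (via the character $\ee^{2\pi\ii\gamma_i}$) of the wall hyperplane of $\sigma_J$ opposite $v_i$, a codimension-one subtorus of $\mathrm{Spec}\,\CC[M_\QQ]$. Crucially, for $i\in I$ no denominator appears: the factor $q^{-u_{i,J}\alpha_i}\ee^{-2\pi\ii\gamma_i\alpha_i}$ is a regular Laurent monomial, so only walls whose ``extra'' vertex lies outside $I$ can be problematic.

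The second and main step is the cancellation itself. Fix a wall $J'$ of $\Sigma$ with $|J'|=\rk N - 1$ and let $J_1=J'\cup\{i_1\}$, $J_2=J'\cup\{i_2\}$ be its adjacent maximal simplices in $\Sigma$. Because $u_{i_1,J_1}$ and $u_{i_2,J_2}$ both annihilate $\ZZ\langle v_j:j\in J'\rangle$, they are proportional linear functions on $M_\QQ$, and the pole loci from the $J_1$- and $J_2$-sides lie in a common codimension-one subtorus. If $J'\supseteq I$, then $\sigma_I^\circ\subseteq C^\circ$ rules out $\sigma_{J'}\subseteq\partial C$, so $J'$ is interior and both adjacent simplices contribute; assuming $i_1, i_2\notin I$ (otherwise there is no pole to cancel), I would verify that the residues of the $J_1$- and $J_2$-summands along each shared pole locus sum to zero. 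The cancellation uses the sign reversal from $v_{i_1}, v_{i_2}$ lying on opposite sides of $J'$, together with the lattice-volume identity $|{\rm Box}(J_1)|\cdot|u_{i_1,J_1}(v_{i_2})| = |{\rm Box}(J_2)|\cdot|u_{i_2,J_2}(v_{i_1})|$ which balances the normalizations $|{\rm Box}(J_k)|^{-1}$ against the residues of the geometric series. If on the other hand $J'\not\supseteq I$, then any adjacent $J_k\supseteq I$ in the sum satisfies $i_k\in I$, the offending denominator is absent, and no cancellation is needed.

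Once the sum is polynomial, evaluation at $q=0$ reduces to checking that no negative powers of $q$ survive, which follows from the same cancellation mechanism applied at the divisor $q=0$ (or, more conceptually, from viewing \eqref{chieq} as an equivariant Euler characteristic on $\PP_\Sigma$ computed by toric localization). The \textbf{main obstacle} is the residue cancellation in the twisted case: for $\gamma=0$ it reduces to a standard fan-balancing identity underlying Proposition \ref{int}, but with general box elements the $\gamma$-contributions from the $J_1$- and $J_2$-sides need not pair up one-to-one, and the cancellation must be verified in aggregated form using the lattice identity above.
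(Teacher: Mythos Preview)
Your approach differs from the paper's primary proof, which is geometric rather than combinatorial. The paper identifies \eqref{chieq} as the fixed-point localization formula for the equivariant Euler characteristic of an explicit $G$-equivariant sheaf on $U$ (namely $\CC[z_1,\ldots,z_n]/\langle z_i,\,i\in I\rangle$ with a suitable linearization), computed via a \v{C}ech cover indexed by the maximal cones; polynomiality and well-definedness of the value at $q=0$ then follow from properness of the closed substack cut out by $I$, which is guaranteed by $\sigma_I^\circ\subseteq C^\circ$. You allude to this viewpoint parenthetically at the end of your plan, but only as a heuristic.

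Your residue-cancellation route is in fact the alternative the paper mentions in the Remark immediately following the proposition, pointing to the argument of Proposition~\ref{hrr}. There, however, the key organizational move that dissolves your ``main obstacle'' is to first reindex the double sum $\sum_{J\supseteq I}\sum_{\gamma\in{\rm Box}(J)}$ as $\sum_{\gamma\in{\rm Box}(\Sigma)}\sum_{J\supseteq\sigma(\gamma)\cup I}$. For each fixed global twisted sector $\gamma$, the inner sum then has exactly the structure of the untwisted case on the quotient fan, and the wall cancellation goes through cleanly using precisely the identity $|{\rm Box}(J_1)|\,u_{i_1,J_1}=-|{\rm Box}(J_2)|\,u_{i_2,J_2}$ that you quote. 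Without this reindexing, your proposed ``aggregated'' cancellation is awkward to carry out: the pole loci $q^{u_{i_k,J_k}}\ee^{2\pi\ii\gamma_{i_k}}=1$ for different $\gamma\in{\rm Box}(J_k)$ are distinct torus-translates, and matching them across a wall requires setting up a bijection between suitable quotients of ${\rm Box}(J_1)$ and ${\rm Box}(J_2)$, which the global $\gamma$-reindexing provides automatically.
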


\begin{proof}
We need to show that the resulting function is a Laurent polynomial with fractional exponents in
$\CC[M_\QQ]$. It can be easily seen to be the Euler characteristics of the line bundle
$\bigotimes_i {\mathcal L}_i^{\alpha_i}$ on the smooth toric DM stack associated to $I$. More precisely, 
it calculates this Euler characteristics  as the Euler characteristics of the pushforward of this bundle to $\PP_\Sigma$.
Recall that $\PP_\Sigma$ is the stack quotient $[U/G]$ for the following $U$ and $G$. 
The scheme $U$ is an open subscheme of $\CC^n$ whose closed points 
 $(z_1,\ldots,z_n)$ have the property that there exists a subset in $\Sigma$ that contains all 
of the indices for which $z_i=0$. The group $G$ is the subgroup of $(\CC^*)^n$ that acts 
diagonally on $\CC^n$ described by $\lambda=(\lambda_1,\ldots, \lambda_n),~\prod_{i=1}^n\lambda_i^{m\cdot v_i}=1$
for all $m\in M$. The aforementioned pushforward corresponds to the $G$-equivariant module on
$\CC^n$ which is isomorphic to 
$$F=\CC[z_1,\ldots,z_n]/\langle z_i,i\in I\rangle$$ 
with the linearization
$\lambda^*\prod_jz_j^{r_j} = \prod_j\lambda_j^{r_j-\alpha_j}z_j^{r_j}$. 

\medskip

We consider the \v{C}ech cover of $U$ by $U_{J},J\in \Sigma,$ defined as the subsets with
$z_j\neq 0, j\not\in J$. We calculate the equivariant cohomology
of $F$ by the complex of invariant sections. If $J\supsetneq I$, 
then the observe that the sections are zero. We apply the delta function trick of \cite{BorLibg}
to show that only the contributions of maximum-dimensional $J$ appear. Finally, to calculate 
the graded dimensions of the contribution of such $U_J$ we need to account for the sublattice,
which is where the summation over ${\rm Box}(J)$ comes in.

\medskip

The resulting Euler characteristics is finite, because of projectivity of the corresponding twisted sector,
which is assured by the assumption on $I$.
\end{proof}

\begin{remark}
An alternative proof that $\chi$ is well-defined follows from the argument of 
Proposition \ref{hrr}.
\end{remark}

\medskip
\begin{proposition}
The linear function $\chi$ descends to a map $K_0^c(\PP_\Sigma)\to \CC$.
\end{proposition}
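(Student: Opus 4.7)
The plan is to verify that $\chi$ is compatible with the two defining families of relations \eqref{K1} and \eqref{K2} of $K_0^c(\PP_\Sigma)$, as well as with the lattice relations $\prod_j R_j^{m\cdot v_j}=1$ that already hold in $K_0(\PP_\Sigma)$. I would argue this geometrically, building directly on the identification established in the proof of Proposition \ref{makessense}: the rational function \eqref{chieq} computes the Euler characteristic of the pushforward to $\PP_\Sigma$ of $\bigotimes_i \mathcal{L}_i^{\alpha_i}$ restricted to the closed toric substack $V(I) \subseteq \PP_\Sigma$ associated to $I$. Under this dictionary, multiplication by $R_i$ corresponds to tensoring with $\mathcal{L}_i$ and the generator $G_I$ corresponds to the $G$-equivariant sheaf $\OO_{V(I)}$.

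For relation \eqref{K2}, assume $i\notin I$ and $I\cup\{i\}\notin\Sigma$. Then $V(I)$ and the toric divisor $V(\{i\})$ are disjoint in $\PP_\Sigma$, so $\mathcal{L}_i|_{V(I)}$ is trivial; consequently
\begin{equation*}
\chi\bigl(V(I),\,\mathcal{L}_i^{-1}\otimes\bigotimes_j \mathcal{L}_j^{\alpha_j}\bigr)=\chi\bigl(V(I),\,\bigotimes_j \mathcal{L}_j^{\alpha_j}\bigr),
\end{equation*}
which is precisely $\chi((1-R_i^{-1})\prod_jR_j^{\alpha_j}G_I)=0$. For \eqref{K1}, assume $i\notin I$ and $I\cup\{i\}\in\Sigma$. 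The canonical section of $\mathcal{L}_i$ cutting out $V(\{i\})$ restricts to a section of $\mathcal{L}_i|_{V(I)}$ whose zero locus is $V(I\cup\{i\})$, yielding the short exact sequence
\begin{equation*}
0\longrightarrow \mathcal{L}_i^{-1}|_{V(I)}\longrightarrow \OO_{V(I)}\longrightarrow \OO_{V(I\cup\{i\})}\longrightarrow 0.
\end{equation*}
Twisting by $\bigotimes_j\mathcal{L}_j^{\alpha_j}$, pushing forward by the closed immersion $V(I)\hookrightarrow \PP_\Sigma$ (which is exact), and using additivity of the Euler characteristic then gives the identity $\chi((1-R_i^{-1})\prod_jR_j^{\alpha_j}G_I)=\chi(\prod_jR_j^{\alpha_j}G_{I\cup\{i\}})$. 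The lattice relations are handled similarly: the toric character $m\in N^\dual$ furnishes a global trivialization of $\bigotimes_j\mathcal{L}_j^{m\cdot v_j}$, so replacing $\alpha_j$ by $\alpha_j+m\cdot v_j$ does not change the Euler characteristic.

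The main obstacle is the bookkeeping required to upgrade the sketch in Proposition \ref{makessense} into a precise enough dictionary that multiplication by $R_i$, the generator $G_I$, and the relevant short exact sequences all have the correct $G$-equivariant linearizations. Once this is in place, the proof is an immediate consequence of the additivity of the Euler characteristic. Alternatively, one could avoid invoking geometry altogether and verify the relations directly from \eqref{chieq}: in case \eqref{K1} the factor $(1-q^{u_{i,J}}\ee^{2\pi\ii\gamma_i})$ cancels the corresponding denominator, reproducing the summation defining $G_{I\cup\{i\}}$; in case \eqref{K2} the index $i$ never lies in any maximal $J\supseteq I$ in $\Sigma$, and a separate check using the lattice relations expresses $R_i$ in terms of $\{R_j\}_{j\in J}$ so that the factor $(1-R_i^{-1})$ reduces to $0$ at $q=0$. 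This direct route is more tedious, which is why I prefer the geometric one.
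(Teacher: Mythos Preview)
Your proof is correct and follows essentially the same route as the paper: the paper handles the lattice relation by noting that \eqref{chieq} is multiplied by $q^{m}$, and then dispatches the module relations by appealing to ``the Koszul complexes and the geometric description of $\chi$'' from Proposition~\ref{makessense}, which is exactly your short-exact-sequence / triviality-of-$\mathcal{L}_i|_{V(I)}$ argument spelled out in detail. Your write-up is in fact more complete than the paper's very terse proof (which only names \eqref{K2} explicitly), and your closing direct-from-\eqref{chieq} sketch is also the alternative the paper alludes to; the one small wrinkle is that in the direct check of \eqref{K1} you should also note that terms with $i\notin J$ are killed outright by $(1-R_i^{-1})$ rather than by a cancelled denominator, but this does not affect correctness.
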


\begin{proof}
We first observe that for any $m\in M$
$$
\chi(\prod_i R_i^{l_i}G_I) = \chi(\prod_i R_i^{l_i+m\cdot v_i}G_I)
$$
because the expression \eqref{chieq} is changed by $q^{m}$. The
analogous statement for \eqref{K2} can be observed directly from \eqref{chieq}
or from the Koszul complexes and the geometric description of $\chi$.
\end{proof}

We now define a pairing between $K_0(\PP_\Sigma)$ and $K_0^c(\PP_\Sigma)$.
\begin{definition}\label{Epairing}
We define the Euler characteristics pairing
$$
\chi:K\times K^c\to \ZZ
$$
by 
$$\chi(\prod_i R_i^{\beta_i},\prod_i R_i^{\alpha_i}G_I)=
\chi(\prod_i R_i^{\alpha_i-\beta_i}G_I).
$$
\end{definition}

We now would like to compare the pairing between $K_0(\PP_\Sigma)$ and $K_0^c(\PP_\Sigma)$
with the pairings between $H_\gamma$ and $H_\gamma^c$ for the twisted sectors. The key to 
this is to view
$\chi$ as a linear function on $\bigoplus_{\gamma\in{\rm Box}(\Sigma)}H_\gamma^c$, which is essentially a combinatorial statement of a version of the
Hirzebruch-Riemann-Roch theorem.

\medskip
Recall that we have defined maps $\int_\gamma:H^c_\gamma\to \CC$.
\begin{proposition}\label{hrr}
For an arbitrary $v\in K_0^c(\PP_\Sigma)$  we have 
$$
\chi(v) = 
\sum_\gamma \frac 1{\vert {\rm Box}(\sigma(\gamma))\vert}
\int_\gamma \prod_{i\in \sigma(\gamma)} \frac 1{(1-ch(R_i^{-1}))}
$$
$$
\prod_
{\stackrel {i\in{\rm Star}(\sigma(\gamma))}{i\not\in \sigma(\gamma)}}\Big(\frac{D_{\bar i}}{1-ch(R_i^{-1})}\Big) ch_\gamma^c(v).
$$
\end{proposition}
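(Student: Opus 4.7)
The plan is to verify the formula on generators $v = \prod_i R_i^{\alpha_i} G_I$ of $K_0^c(\PP_\Sigma)$, then reindex the defining sum \eqref{chieq} for the LHS and match it with the RHS expanded via Brion's integration formula from Proposition \ref{int}.

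First, by $\CC$-linearity of both sides in $v$, I reduce to such generators. For them, the LHS is the value at $q=0$ of a sum over pairs $(J,\gamma)$ with $J\supseteq I$ a maximum-dimensional cone of $\Sigma$ and $\gamma=\sum_{j\in J}\gamma_j v_j\in{\rm Box}(J)$. Each such $\gamma$ determines a unique $\tilde\gamma\in{\rm Box}(\Sigma)$ whose supporting cone $\sigma(\tilde\gamma)$ is the face of $J$ spanned by those $v_j$ with $\gamma_j\neq 0$; conversely, a given $\tilde\gamma$ is counted via each maximum cone $J\supseteq\sigma(\tilde\gamma)$. Reindexing, the LHS becomes
\[
\sum_{\tilde\gamma\in{\rm Box}(\Sigma)}\,\sum_{\substack{J\supseteq I\cup\sigma(\tilde\gamma)\\ |J|=\rk N}}(\text{inner term})\big|_{q=0},
\]
which already matches the outer structure of the claimed RHS.

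For the RHS, applying Brion's formula \eqref{this} expresses $\int_{\tilde\gamma}$ as a sum over maximum-dimensional cones $\bar J$ in the induced fan $\Sigma_{\tilde\gamma}$; these correspond bijectively to maximum-dimensional cones $J\supseteq\sigma(\tilde\gamma)$ of $\Sigma$. The requirement $I\subseteq J$ then reflects the condition $\bar I\subseteq\bar J$ needed for $\bar F_{\bar I}$ from \eqref{chcgamma} to contribute nontrivially. Thus both sides become sums over the same index set of pairs $(\tilde\gamma,J)$.

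The crux is a pointwise identity for each such pair. The prefactor $\frac{1}{|{\rm Box}(\sigma(\tilde\gamma))|\cdot|{\rm Vol}(\bar J)|}$ on the RHS combines to $\frac{1}{|{\rm Box}(J)|}$ on the LHS via the short exact sequence of lattices $0\to{\rm Span}(\sigma(\tilde\gamma))\to{\rm Span}(J)\to{\rm Span}(\bar J)\to 0$. Using \eqref{ch}, the Todd-like denominators $1-ch_{\tilde\gamma}(R_i^{-1})$ become $1-e^{-2\pi\ii\gamma_i}\prod(\ldots)$ for $i\in\sigma(\tilde\gamma)$ and $1-e^{-\bar D_i}$ for $i\in{\rm Star}\setminus\sigma(\tilde\gamma)$, matching structurally the localization denominators $1-q^{u_{i,J}}e^{2\pi\ii\gamma_i}$ on the LHS; the extra $\bar D_i$ numerators on the RHS cancel the $\frac{1-e^{-\bar D_i}}{\bar D_i}$ appearing in \eqref{chcgamma}, and the LHS monomial $\prod_i q^{-u_{i,J}\alpha_i}e^{-2\pi\ii\gamma_i\alpha_i}$ matches $\prod_i ch_{\tilde\gamma}(R_i)^{\alpha_i}$ on the RHS. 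The main obstacle is the careful bookkeeping of this term-by-term matching, particularly verifying that the $q=0$ evaluation on the LHS plays precisely the role of extracting the socle coefficient of the class in $H^c_{\tilde\gamma}$ on the RHS, and that the apparent poles in both expressions cancel in a compatible manner.
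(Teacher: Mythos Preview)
Your outline is essentially the paper's own strategy: reduce to monomials $\prod_i R_i^{\alpha_i}G_I$, reindex \eqref{chieq} as a sum over $\tilde\gamma\in{\rm Box}(\Sigma)$ followed by maximal cones $J\supseteq I\cup\sigma(\tilde\gamma)$, expand the RHS via Brion's formula \eqref{this}, and match using $\vert{\rm Box}(J)\vert=\vert{\rm Vol}(\bar J)\vert\cdot\vert{\rm Box}(\sigma(\tilde\gamma))\vert$.

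Two points of the paper's execution that you flag as ``the main obstacle'' but do not actually carry out are worth naming, since they are where the signs and cancellations live. First, the paper does not match at $q=0$ directly: it substitutes $q^{u}\mapsto \ee^{-u}$ (a harmless sign flip since one evaluates at ${\bf u}=0$) \emph{and} relabels each $\gamma$ by its dual $\gamma^\vee$ with $\ee^{2\pi\ii\gamma_i^\vee}=\ee^{-2\pi\ii\gamma_i}$; only after this do the factors $\ee^{-2\pi\ii\gamma_i\alpha_i}$ in \eqref{chieq} become $ch_{\tilde\gamma}(R_i)^{\alpha_i}$ rather than $ch_{\tilde\gamma}(R_i)^{-\alpha_i}$. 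Second, rather than matching term-by-term for arbitrary $I$, the paper first checks that each per-sector piece $\chi_{\tilde\gamma}(v)$ is separately well-defined (the codimension-one pole cancellation), observes that $\chi_{\tilde\gamma}$ only depends on the localization at $\tilde\gamma$, and then uses invertibility of $(1-R_i^{-1})$ for $i\in\sigma(\tilde\gamma)$ to reduce to the case $I\cap\sigma(\tilde\gamma)=\emptyset$ and $\alpha_i=0$ outside ${\rm Star}(\sigma(\tilde\gamma))\setminus\sigma(\tilde\gamma)$. In that reduced situation the match with \eqref{this} and \eqref{chcgamma} is clean; your direct term-by-term match for general $I$ would require tracking the extra $\prod_{i\in I\cap\sigma(\tilde\gamma)}(1-ch_{\tilde\gamma}(R_i)^{-1})$ factors from \eqref{chcgamma} against the missing denominators in \eqref{chieq}, which is doable but messier.
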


\begin{proof} 
Consider $v=(\prod_i R_i^{\alpha_i})G_I$. 
Then we have from \eqref{chieq} that
$\chi(v)$ is given by the evaluation at 
${\bf u}=0$ of the expression
$$
\sum_{\stackrel{J\supseteq I, }{|J|=\rk N}}
\frac 1{\vert {\rm Box}(J)\vert}
\sum_{\stackrel{\gamma \in {\rm Box}(J)}{\gamma=\sum_{j\in J}\gamma_jv_j}}
\prod_{i\in J} \ee^{\alpha_iu_{i,J}+2\pi\ii \gamma_i\alpha_i}
\prod_{i\in J-I} \frac 1{1-\ee^{-u_{i,J}-2\pi\ii\gamma_i}}
$$
where $u_{i,J}$ is a linear function on $N_\CC$ with values $1$ at $i$ and $0$ at 
other elements of $J$. Note that we have changed the sign in ${\bf u}$, which will not matter
since we are evaluating at $0$. We also switched from $\gamma$ to a dual twisted sector with
$\ee^{2\pi\ii\gamma_i^\dual} = \ee^{-2\pi\ii\gamma_i}$.
Let us rewrite this as a summation over the twisted sectors, i.e. over the
elements of ${\rm Box}(\Sigma)$.
$$
\chi(v)=\sum_{\gamma=\sum_{i\in \sigma(\gamma)}\gamma_iv_i\in {\rm Box}(\Sigma)}\chi_\gamma(v),
$$
where 
$\chi_\gamma(v)$ is the evaluation at ${\bf u}=0$ of
$$
\sum_{J\supseteq \sigma(\gamma)\cup I, |J|=\rk N}
\frac 1{\vert {\rm Box}(J)\vert}
{\prod_{i\in J} \ee^{\alpha_iu_{i,J}+2\pi\ii \gamma_i\alpha_i}}
\prod_{i\in J-I} \frac 1{1-\ee^{-u_{i,J}-2\pi\ii\gamma_i}}.
$$
Note that $\chi_\gamma(v)$ is well-defined. Indeed, in the neighborhood of ${\bf u}=0$ the
terms of the summation
have poles of order one along divisors $u_{i,J}=0$ for some $J\supseteq I\cup \sigma(\gamma)$
with $i\not\in I\cup\sigma(\gamma)$. Each such occurrence corresponds to a codimension one 
cone in $\sigma$ given by $J-\{i\}$. This cone contains $I$ and thus an interior point of $C$.
Thus, each cone appears in two terms, for adjacent $J_1$ and $J_2$ with 
$J_1-\{i_1\}=J_2-\{i_2\}$. The corresponding
contributions are the same at a generic point of $u_{i,J}=0$, except for 
$\frac 1{\vert {\rm Box}(J_1)\vert} \frac 1{1-\ee^{-u_{i_1,J_1}}}$
and
$\frac 1{\vert {\rm Box}(J_2)\vert} \frac 1{1-\ee^{-u_{i_2,J_2}}}$, which give opposite residues
on $u_{i,J}=0$ in view of 
$$
\vert {\rm Box}(J_1)\vert u_{i_1,J_1}
= - {\vert {\rm Box}(J_2)\vert} u_{i_2,J_2}.
$$
Thus, the function used to define $\chi_\gamma(v)$ has no poles in codimension one and 
is holomorphic in the neighborhood of ${\bf u}=0$.

\medskip

As before, we see that $\chi_\gamma$ is well-defined on $K_0^c(\PP_\Sigma)$.
Observe also that $\chi_\gamma$ factors through $ch_\gamma:K_0^c(\PP_\Sigma)\to
H_\gamma^c$. To prove this, observe that for any $w=f(\log R)G_I$ we have 
$$
\chi_\gamma(w) = 
\Big(
\sum_{\stackrel{J\supseteq \sigma(\gamma)\cup I,}{ |J|=\rk N}}
\frac {f(2\pi\ii \gamma_i+\delta(i\in J)u_{i,J})}
{\vert {\rm Box}(J)\vert}
\prod_{i\in J-I} \frac 1{1-\ee^{-u_{i,J}-2\pi\ii\gamma_i}}\Big)_{{\bf u}=0},
$$
so one only needs to know the power series expansion of $f$ near $(2\pi\ii\gamma)$,
i.e. the information about the $\gamma$ local component of $K_0^c(\PP_\Sigma)$.

\medskip
Consider the fan $\Sigma_\gamma$ in the quotient
lattice $N_\gamma=N/(N\cap \QQ(\sigma(\gamma)))$ which comes from $J\supseteq \sigma(\gamma)$. Denote 
the corresponding cones and simplicial sets by $\bar J$. In the $\gamma$ component of $K_0^c(\PP_\Sigma)$ we have $R_i=1$ for $i\not\in {\rm Star}(\sigma(\gamma))$. We can also use the polynomial relations on $R_i$ to solve for $R_{i}$ in 
$\sigma(\gamma)$. We can thus consider only $v=(\prod_{i\in {\rm Star}(\sigma(\gamma))-\sigma(\gamma)}R_i^{\alpha_i})G_I$. In fact, we can use the invertibility of $(1-R_i^{-1})$ on the 
$\gamma$ component of $K_0^c(\PP_\Sigma)$ for $i\in \sigma(\gamma)$ to only consider 
$I$ with $I\cap \sigma(\gamma)=\emptyset$.
For each such $i\in {\rm Star}(\sigma(\gamma))-\sigma(\gamma)$ we will denote by $\bar i$ the corresponding
index in  the quotient fan and define $\alpha_{\bar i} = \alpha_i$.  We have
$$\vert {\rm Box}(J) \vert = \vert \Vol(\bar J)\vert\, \vert {\rm Box}(\sigma(\gamma))\vert,
$$ 
so using \eqref{this} and \eqref{chcgamma} we get 
$$
\chi_\gamma(v)=\int_\gamma f
$$
where $f$ is a piecewise linear function on $\Sigma_\gamma$ 
whose component $f_{\bar J}$ is
given by
$$\frac 1{ \vert {\rm Box}(\sigma(\gamma))\vert}
\prod_{i\in \sigma(\gamma)} \frac 1{(1-ch_\gamma(R_i^{-1}))}
\prod_{\bar i\in \bar J}\frac {u_{\bar i,\bar J}}{(1-\ee^{-u_{\bar i,\bar J}})}\,
ch^c_\gamma(\prod_{i}R_i^{\alpha_i}(G_I))_{\bar J}.
$$
The statement of proposition then follows.
\end{proof}

\begin{corollary}\label{perfect}
The pairing between $K_0(\PP_\Sigma)$ and $K_0^c(\PP_\Sigma)$ given 
by Definition \ref{Epairing} is non-degenerate.
\end{corollary}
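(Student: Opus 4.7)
The plan is to diagonalize the Euler pairing over the twisted sectors of $\PP_\Sigma$, and then reduce each sector-block to the non-degenerate pairing provided by Proposition~\ref{int}.

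\medskip

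Since each $R_i$ is a unit in $K_0(\PP_\Sigma)$ and the relations \eqref{Kthry} are preserved by $R_i \mapsto R_i^{-1}$, the assignment $\prod R_i^{\beta_i}\mapsto\prod R_i^{-\beta_i}$ extends to a ring involution $\phi$ of $K_0(\PP_\Sigma)$. Definition~\ref{Epairing} then reads $\chi(x,y) = \chi(\phi(x)\cdot y)$, where $\phi(x)\cdot y$ denotes the $K_0$-module action on $K_0^c$. Combining this with Proposition~\ref{hrr} and the fact that $ch^c_\gamma$ is a homomorphism of $K_0$-modules over $ch_\gamma$ yields
\begin{equation*}
\chi(x,y) \;=\; \sum_{\gamma\in {\rm Box}(\Sigma)} \frac{1}{|{\rm Box}(\sigma(\gamma))|}\int_\gamma T_\gamma\cdot \phi_\gamma(ch_\gamma(x))\cdot ch^c_\gamma(y),
\end{equation*}
where $\phi_\gamma$ is the automorphism of $H_\gamma$ induced by $\phi$ and $T_\gamma$ is the product of the factors $\tfrac{1}{1-ch_\gamma(R_i^{-1})}$ for $i\in\sigma(\gamma)$ and $\tfrac{\bar D_i}{1-ch_\gamma(R_i^{-1})}$ for $i\in{\rm Star}(\sigma(\gamma))\setminus\sigma(\gamma)$.

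\medskip

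Propositions~\ref{prop:ch} and~\ref{prop:chc} identify $K_0(\PP_\Sigma)$ and $K_0^c(\PP_\Sigma)$ with $\bigoplus_\gamma H_\gamma$ and $\bigoplus_\gamma H^c_\gamma$ compatibly, so the pairing is block-diagonal: elements supported in distinct sectors are orthogonal, and it suffices to show that each sector pairing
\begin{equation*}
H_\gamma\times H^c_\gamma\to\CC,\qquad (a,b)\mapsto \int_\gamma T_\gamma\cdot \phi_\gamma(a)\cdot b
\end{equation*}
is non-degenerate. The key observation is that $T_\gamma$ is a unit in the local Artinian ring $H_\gamma$: for $i\in\sigma(\gamma)$ the factor $1-ch_\gamma(R_i^{-1})=1-\ee^{-2\pi\ii\gamma_i}(1+\text{nilpotent})$ is invertible since $\gamma_i\in(0,1)$, while for the remaining $i$ the factor $\bar D_i/(1-\ee^{-\bar D_i})$ has constant term $1$. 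Because $\phi_\gamma$ is itself a ring automorphism of $H_\gamma$, the map $a\mapsto T_\gamma\cdot\phi_\gamma(a)$ is a $\CC$-linear automorphism of $H_\gamma$, and so the sector pairing is non-degenerate if and only if the bare pairing $(c,b)\mapsto \int_\gamma c\cdot b$ is. The latter is precisely Proposition~\ref{int} applied to the twisted sector with fan $(N_\gamma,\Sigma_\gamma)$.

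\medskip

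The main technical point I expect is the passage from Proposition~\ref{hrr} (which is stated for a single element of $K_0^c$) to the displayed bilinear expression, together with the verification that $T_\gamma$ is a unit and that $\phi$ really does descend to an automorphism of each $H_\gamma$; these amount to routine local computations in Artinian rings once the ring/module structure of Propositions~\ref{prop:ch}--\ref{prop:chc} is in hand.
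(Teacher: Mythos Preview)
Your approach is exactly the paper's: factor off the involution $R_i\mapsto R_i^{-1}$, apply Proposition~\ref{hrr}, and reduce sector by sector to the integration pairing of Proposition~\ref{int}, using that the Todd-like correction is a unit in each $H_\gamma$.

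There is one inaccuracy to correct. The involution $\phi$ does \emph{not} in general restrict to an automorphism of each $H_\gamma$: it permutes the local summands by sending the $\gamma$-sector to the dual sector $\gamma^\vee$ (defined by $\gamma_i^\vee\equiv -\gamma_i\bmod 1$), since $R_i-\ee^{2\pi\ii\gamma_i}$ nilpotent becomes $R_i-\ee^{-2\pi\ii\gamma_i}$ nilpotent after $R_i\mapsto R_i^{-1}$. So your ``$\phi_\gamma$ is a ring automorphism of $H_\gamma$'' is false unless $\gamma=\gamma^\vee$. The clean fix, which is what the paper does, is to separate the two steps: first note that $\phi$ is a ring automorphism of the whole of $K_0(\PP_\Sigma)$, so non-degeneracy of $(x,y)\mapsto\chi(\phi(x)\cdot y)$ is equivalent to non-degeneracy of $(x,y)\mapsto\chi(x\cdot y)$; this second pairing \emph{is} genuinely block-diagonal over $\gamma$, and your argument with $T_\gamma$ invertible and Proposition~\ref{int} then goes through verbatim. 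Alternatively, you can keep track of the permutation and pair $H_{\gamma^\vee}$ with $H^c_\gamma$; since $\sigma(\gamma)=\sigma(\gamma^\vee)$ these are canonically isomorphic and the conclusion is the same.
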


\begin{proof}
Observe that 
$$
\chi(w,v) = \chi( w^\dual v)
$$
where $w\to w^\dual$ is the duality automorphism on $K_0(\PP_\Sigma)$ which
sends $R_i\to R_i^{-1}$.
The duality does not affect the nondegeneracy of the pairing. The rest follows from 
the fact that $(w,v)\mapsto \int_\gamma wv$ is a duality between $H_\gamma$ and $H_\gamma^c$,
and the 
fact that the corrections 
$$
\frac 1{\vert {\rm Box}(\sigma(\gamma))\vert}
\int_\gamma \prod_{i\in \sigma(\gamma)} \frac 1{(1-ch(R_i^{-1}))}
\prod_
{\stackrel {i\in{\rm Star}(\sigma(\gamma))}{i\not\in \sigma(\gamma)}}\Big(\frac{D_{\bar i}}{1-ch(R_i^{-1})}\Big)
$$
of Proposition \ref{hrr}
are invertible in $H_\gamma$.
\end{proof}

\begin{remark}
There are natural integer structures on $K_0(\PP_\Sigma)$ and $K_0^c(\PP_\Sigma)$ given
by looking at the integer linear combinations of monomials. It is clear from the proof of 
Proposition \ref{makessense} that the pairing on such monomials is integral. 
We do not know whether the pairing in question is unimodular.
\end{remark}

\begin{example}\label{pairingexample}
We now calculate in detail the case of $C\subset \ZZ^2$ with $v_1=(0,1)$, $v_2=(1,1)$ and $v_3=(3,1)$ considered in Example \ref{keyex}.

\medskip

We will  consider the natural bases of $K_0(\PP_\Sigma)$ and $K_0^c(\PP_\Sigma)$ given by $1,R_3, R_3^2$ and  $G_2,R_3G_2,R_3^2G_2.$
Our goal is to calculate the pairing between these basis elements. The key step is the following calculation of $\chi(R^kG_2)$.
Let us denote $q^{(a,b)}=s^at^b$. The Euler characteristic in question is the value at $s=t=1$ of
$$
\frac 1{1-s^{-1}t} +\frac 12\frac 1{1-s^{\frac 12}t^{-\frac 12}}s^{-\frac k2}t^{\frac k2} + \frac 12\frac 1{1+s^{\frac 12}t^{-\frac 12}}(-1)^ks^{-\frac k2}t^{\frac k2}.
$$
We can set $t=1$ to get 
$$
\frac 1{1-s}\Big(-s+\frac 12 s^{-\frac k2}(1+s^{\frac 12}) + \frac 12 (-1)^ks^{-\frac k2}(1-s^{\frac 12}) 
\Big)
$$
$$
=\frac 1{1-s}\Big(-s+\frac 12(1+(-1)^k)s^{-\frac k2} +\frac 12(1-(-1)^k)s^{-\frac k2 +\frac 12}
\Big).
$$
This means that 
$$
\chi(R_3^kG_2) = \left\{\begin{array}{ll}
\frac k2+1, &k=0\mod 2\\
\frac {k+1}2,&k=1\mod 2
\end{array}\right.
$$
and the pairings are given by the following table.
\begin{equation}\label{pairex}
\begin{array}{|r|ccc|}\hline
 & 1&R_3&R_3^2\\ 
\hline
G_2&1&0&0\\
R_3G_2&1&1&0\\
R_3^2G_2&2&1&1\\ \hline
\end{array}
\end{equation}
\end{example}

\begin{remark}
A motivated reader may want to check that Proposition \ref{hrr} holds in this example.
An unmotivated reader can rest assured that we checked it.
\end{remark}

\section{Derived categories}\label{sec:dcat}
The space  $K_0(\PP_\Sigma)$ is the (complexified) Grothendieck group of the triangulated category $D(\PP_\Sigma)$ of bounded complexes
of coherent sheaves on the stack $\PP_\Sigma$. 
In this section we will describe the triangulated category $D^c(\PP_\Sigma)$ 
whose Grothendieck group  is naturally isomorphic to $K_0^c(\PP_\Sigma)$.

\medskip

Note that the coarse moduli space of $\PP_\Sigma$ is not compact. However, it has a closed, proper over ${\rm Spec}(\CC)$, subscheme 
$\pi^{-1}(0)$ which is the union of the torus
orbits for cones $\sigma\in \Sigma$ such that the interior of $\sigma$ lies in 
the interior of $C$. Geometrically, this is the zero fiber 
of the map $\pi$ from 
$\PP_\Sigma$ to the affine toric variety 
${\rm Spec}\,\CC[C^\dual\cap N^\dual]$
defined by $C$, which motivates the notation.
The fiber $\pi^{-1}(0)$ need not be irreducible or even equidimensional. The following definition
is natural.
\begin{definition}
Denote by $D^c(\PP_\Sigma)$ the full subcategory of $D(\PP_\Sigma)$ which consists of objects whose cohomology sheaves are supported
on $\pi^{-1}(0)$. 
\end{definition}

\begin{theorem}\label{k0c}
The complexified Grothendieck group of $D^c(\PP_\Sigma)$ is isomorphic to $K^c_0(\PP_\Sigma)$.
\end{theorem}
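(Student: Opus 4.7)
The plan is to construct the natural $K_0(\PP_\Sigma)$-module homomorphism
$$
\phi: K_0^c(\PP_\Sigma)\longrightarrow K_0(D^c(\PP_\Sigma)),\qquad G_I\longmapsto [\OO_{V(\sigma_I)}],
$$
where $V(\sigma_I)\subseteq \PP_\Sigma$ is the closed toric substack associated to the cone $\sigma_I$, and then to verify that it is an isomorphism. First, to see that $[\OO_{V(\sigma_I)}]$ lies in $K_0(D^c(\PP_\Sigma))$, observe that $\sigma_I^\circ\subseteq C^\circ$ forces $\sigma_I\not\subseteq \partial C$; hence for every $\tau\in\Sigma$ with $\tau\supseteq \sigma_I$ we also have $\tau\not\subseteq\partial C$, so $\tau^\circ\subseteq C^\circ$, and therefore $V(\sigma_I)=\bigcup_{\tau\supseteq\sigma_I}O_\tau\subseteq \pi^{-1}(0)$.

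Well-definedness of $\phi$ amounts to checking the two families of relations. Using the Koszul resolution $0\to \OO(-D_i)\to \OO\to \OO_{D_i}\to 0$, the element $(1-R_i^{-1})\in K_0(\PP_\Sigma)$ is identified with $[\OO_{D_i}]$. When $i\notin I$ and $I\cup\{i\}\in\Sigma$, the divisor $D_i$ meets the smooth substack $V(\sigma_I)$ transversally along $V(\sigma_{I\cup\{i\}})$, so $[\OO_{D_i}]\cdot[\OO_{V(\sigma_I)}]=[\OO_{V(\sigma_{I\cup\{i\}})}]$, reproducing \eqref{K1}. When $I\cup\{i\}\notin\Sigma$ the intersection $D_i\cap V(\sigma_I)$ is empty, so the product vanishes, reproducing \eqref{K2}. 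Hence $\phi$ descends to a well-defined module map.

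For injectivity I compare with the Euler characteristic pairing of Definition \ref{Epairing}. The composition
$$
K_0^c(\PP_\Sigma)\xrightarrow{\ \phi\ } K_0(D^c(\PP_\Sigma))\xrightarrow{\ \chi(\cdot,-)\ } K_0(\PP_\Sigma)^\dual,
$$
where the second arrow sends $[\Fa]$ to the functional $[\La]\mapsto \chi(\PP_\Sigma,\Fa\otimes \La^\dual)$ (which is well defined because the cohomology is supported on the proper scheme $\pi^{-1}(0)$ and is therefore finite-dimensional), agrees with the combinatorial pairing thanks to the geometric interpretation of $\chi$ established in the proof of Proposition \ref{makessense}. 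That pairing is perfect by Corollary \ref{perfect}, and hence $\phi$ is injective.

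The main obstacle is surjectivity, for which I use d\'evissage along the support stratification. An arbitrary object of $D^c(\PP_\Sigma)$ has only finitely many non-zero cohomology sheaves, all supported on $\pi^{-1}(0)$, so it suffices to show $[\Fa]\in \mathrm{im}(\phi)$ for every coherent sheaf $\Fa$ with $\mathrm{Supp}(\Fa)\subseteq \pi^{-1}(0)$. Since $\pi^{-1}(0)$ is the union of the $V(\sigma_I)$ with $\sigma_I^\circ\subseteq C^\circ$, stratified by torus orbits, a standard filtration of $\Fa$ by powers of the defining ideals of these strata reduces the problem to expressing, for each such $I$, the class of an arbitrary coherent sheaf on the single substack $V(\sigma_I)$ in terms of classes of the form $[\La|_{V(\sigma_I)}]=[\La]\cdot[\OO_{V(\sigma_I)}]$ with $\La$ a line bundle on $\PP_\Sigma$. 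Each such $V(\sigma_I)$ is itself a smooth toric DM stack of the same type, whose complexified $K$-theory is generated over $\CC$ by line bundles, all of which are restrictions from $\PP_\Sigma$. Since $\phi$ is $K_0(\PP_\Sigma)$-linear and maps $G_I$ to $[\OO_{V(\sigma_I)}]$, this places every such class in the image of $\phi$ and completes the proof.
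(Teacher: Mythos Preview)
Your proof is correct and follows essentially the same strategy as the paper: construct the map $G_I\mapsto[\OO_{V(\sigma_I)}]$, verify the relations via Koszul sequences, obtain surjectivity by d\'evissage along the support together with the fact that coherent sheaves on the toric substacks $V(\sigma_I)$ are resolved by line bundles (which restrict from $\PP_\Sigma$), and deduce injectivity from the perfectness of the Euler-characteristic pairing of Corollary~\ref{perfect}. The only cosmetic differences are that the paper treats surjectivity before injectivity and packages the pairing step as a separate lemma using ${\rm Hom}(F,G[i])$ rather than $\chi(\Fa\otimes\La^\dual)$, which for line bundles $\La$ is the same thing.
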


\begin{proof}
We will denote the Grothendieck group of $D^c(\PP_\Sigma)$ by $G_0(D^c)$. This is a temporary notation, which will
be later replaced by $K^c_0(\PP_\Sigma)$ in view of this theorem.

\medskip

There is a map  $\mu:K^c_0(\PP_\Sigma)\to G_0(D^c)$ which sends
$\prod_i R_i^{k_i}G_I$ to the image of the pushforward of the appropriate line bundle on the closed substack of
$\PP_{\Sigma}$ which corresponds to $I$. The relations get sent to zero in view of Koszul complexes. We now want to show that this map $\mu$ is an isomorphism.

\medskip

First, we tackle the surjectivity of $\mu$. Every complex is equal in the Grothendieck group to the sum of its cohomology sheaves. 
Images of sheaves supported on $\pi^{-1}(0)$ in the Grothendieck group are generated by sheaves supported on irreducible
components of $\pi^{-1}(0)$. Moreover, one can filter by powers of the corresponding ideal, to reduce to pushforwards 
from the said irreducible components. Finally, sheaves on toric stacks are resolved by line bundles, see \cite{BH}.

\medskip

Injectivity requires a bit more work. We can use the pairing between $K_0^c(\PP_\Sigma)$ and $K_0(\PP_\Sigma)$ to  do this.
The key to this argument is the following lemma.

\begin{lemma}
There is a natural pairing between $D(\PP,\Sigma)$ and $D^c(\PP_\Sigma)$ given by 
$$
\langle F,G\rangle = \sum_i (-1)^k \dim_\CC {\rm Hom}(F,G[i]).
$$
This descends to the Grothendieck groups and coincides with the pairing defined in 
Corollary \ref{perfect}
on
the image of $K_0^c(\PP_\Sigma)$ defined above.
\end{lemma}

\begin{proof}
There is a spectral sequence that calculates ${\rm Hom}(F,G[i])$ starting from ${\rm Hom}$-s between $F$ and the shifts of the direct sum
of cohomology sheaves of $G$.  There is then a local-to-global spectral sequence that calculates 
these spaces in terms of cohomology of the local $\rm Ext$ sheaves, which are finite-dimensional because $\pi^{-1}(0)$ is proper.
Thus, the pairing is well-defined, and the passage to Grothendieck groups is then trivial.

\medskip

To verify that this pairing reduces to the pairing of Corollary \ref{perfect}, it is enough to consider 
the pairing between $\prod_i R_i^{\beta_i}$ and $\prod_i R_i^{\alpha_i}G_I$. It then follows 
from the proof of Proposition \ref{makessense}.
\end{proof}

We now complete the proof of Theorem \ref{k0c} by proving the injectivity of $\mu$. If an element of $K_0^c(\PP_\Sigma)$ is mapped to zero,
then its pairing with any element of $K_0(\PP_\Sigma)$ is zero. However, the pairing between $K_0(\PP_\Sigma)$ and $K_0^c(\PP_\Sigma)$
is perfect by Corollary \ref{perfect}, which finishes the proof.
\end{proof}

\section{Gamma series with values in $K_0(\PP_\Sigma)$ and $K_0^c(\PP_\Sigma)$}
\label{gamex}
In this section we describe the Gamma series solution to an appropriate version of 
GKZ hypergeometric system. Then we calculate these solutions explicitly 
for the Example \ref{keyex} in terms of Gamma function.
We also calculate the leading terms of some of these series, which will be useful in 
Section \ref{sec8}.

\medskip

Our combinatorial setup is as before. However, we now require that $v_i$ lie in
a hyperplane in $N$. We also assume that the fan $\Sigma$ is projective, in the sense that
it corresponds to a chamber of the secondary fan.
We recall the definitions of the better behaved hypergeometric system of equations 
from \cite{BH1}.

\begin{definition}\label{bbgkz}
Consider the system of partial differential equations on the infinite collection of 
functions $\Phi_c(x_1,\ldots,x_n)$ of $n$ variables. The functions are indexed by lattice elements $c\in C$. The differential equations are the following. For all $m\in M$, $c\in C$, $i\in \{1,\ldots,n\}$
\begin{equation}\label{gkz}
\partial_i \Phi_c = \Phi_{c+v_i},\hskip 30pt
\sum_{i=1}^n \langle m , v_i \rangle x_i\partial_i \Phi_{c} + \langle m,c\rangle \Phi_c =0.
\end{equation}
We call this system $bbGKZ(C,\beta=0)$ or $bbGKZ(C,0)$ for short. It was observed in \cite{BH1} that this system reduces to a system of holonomic PDEs
of a finite collection of functions.
Similarly, we define $bbGKZ(C^\circ,0)$ by considering $c\in C^\circ$ only.
\end{definition}

\begin{remark}
There is a restriction natural map from the space of solutions of 
$bbGKZ(C,0)$ to that of $bbGKZ(C^\circ,0)$. However, this map
is never an isomorphism, because it sends the trivial solution $\{\Phi_c = \delta_c^0\}$
to zero. The dimension of the image has been shown to be related to Erkhardt and
Stanley polynomials of $C$, see \cite{Bor.str}.
\end{remark}

\medskip

A modification of the original Gamma series of \cite{GKZ,stienstra,BH2} yields the 
following solution
of $bbGKZ(C,0)$ with values in $K_0^c$. We assume that we are given 
a projective
\footnote{Projectivity is necessary to assure uniform convergence in an open set of parameters.} simplicial subdivision $\Sigma$ of $C$. We also fix a choice of a branch
of $\log(x_i)$.

\begin{definition}\label{def:gamma}
Consider for each $c$ in $C$ and each twisted sector $\gamma = \sum_i \gamma_i v_i$
the set $L_{c,\gamma}$ of $(l_i)\in \QQ^n$ such that $\sum_i l_i v_ i = -c$ and $l_i-\gamma_i \in \ZZ$.
Define a collection of functions of $(x_1,\ldots,x_n)$ indexed by $c\in C$ with values in 
$\oplus_\gamma H_\gamma$
$$
(\Gamma(x_1,\ldots,x_n))_c = 
\bigoplus_\gamma\sum_{(l_i)\in L_{c,\gamma}}\prod_{i=1}^{n}
\frac {x_i^{l_i+\frac {D_i}{2\pi\ii}}}{\Gamma(1+l_i+\frac {D_i}{2\pi\ii})}
$$
where $x^a$ is defined by $\ee^{a\log x}$ after picking a branch of $\log x$.
Here $D_i=\log ch_\gamma(R_i\ee^{-2\pi\ii\gamma_i})$, in particular it is nilpotent.
Thus $x_i^\frac {D_i}{2\pi\ii}$ is well defined once the branch of $\log x$ is fixed.
\end{definition}

\begin{remark}
We are using the natural isomorphism $ch:K_0(\PP_\Sigma)\iso \oplus_\gamma H_\gamma$.
It allows us to view the above Gamma series as taking values in $K_0(\PP_\Sigma)$.
\end{remark}

\begin{proposition}\label{prop:gamma}
The Gamma series of Definition \ref{def:gamma} defines a solution of $bbGKZ(C,0)$
with values in $K_0(\PP_\Sigma)$. By composing with linear functions on $K_0(\PP_\Sigma)$ 
one gets all solutions of $bbGKZ(C,0)$.
\end{proposition}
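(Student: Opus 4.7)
The plan is to verify the PDE system \eqref{gkz} termwise on the Gamma series, using only two inputs: the functional equation $\Gamma(s+1)=s\Gamma(s)$ and the relations that define $H_\gamma$ as a quotient of a polynomial ring. Once the series is shown to solve $bbGKZ(C,0)$, the statement that \emph{all} solutions arise this way will follow from a dimension count together with a separation-of-leading-terms argument to show injectivity of the map from $K_0(\PP_\Sigma)^\dual$ into the solution space.

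\emph{First PDE.} The differentiation $\partial_i$ acting on a single term of $(\Gamma)_c$ replaces the factor $x_i^{l_i+D_i/(2\pi\ii)}/\Gamma(1+l_i+D_i/(2\pi\ii))$ by $x_i^{(l_i-1)+D_i/(2\pi\ii)}/\Gamma(l_i+D_i/(2\pi\ii))$ using $\Gamma(s+1)=s\Gamma(s)$ (which makes sense because $D_i$ is nilpotent, so $l_i+D_i/(2\pi\ii)$ is an invertible element of the local ring plus a nilpotent). The shift $l_j\mapsto l_j-\delta_{ij}$ is precisely the bijection $L_{c,\gamma}\iso L_{c+v_i,\gamma}$, so this produces $(\Gamma)_{c+v_i}$ term by term.

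\emph{Second PDE.} Applying $\sum_i\langle m,v_i\rangle x_i\partial_i$ to one term multiplies it by $\sum_i\langle m,v_i\rangle(l_i+D_i/(2\pi\ii))$. The constraint $\sum_i l_i v_i=-c$ defining $L_{c,\gamma}$ immediately gives $\sum_i\langle m,v_i\rangle l_i=-\langle m,c\rangle$, which cancels $\langle m,c\rangle\Phi_c$. What remains is to check that $\sum_i\langle m,v_i\rangle D_i=0$ in $H_\gamma$. Using the formulas recalled after \eqref{ch}, one has $D_i=0$ for $i\notin\mathrm{Star}(\sigma(\gamma))$, $D_i=\bar D_i$ for $i\in\mathrm{Star}(\sigma(\gamma))\setminus\sigma(\gamma)$, and $D_i=\sum_{j\notin\sigma(\gamma)}(m_i\cdot v_j)\bar D_j$ for $i\in\sigma(\gamma)$. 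Setting $m':=m+\sum_{i\in\sigma(\gamma)}\langle m,v_i\rangle m_i$ gives an element of $\mathrm{Ann}(v_i,i\in\sigma(\gamma))$ by construction of the $m_i$, and the sum $\sum_i\langle m,v_i\rangle D_i$ collapses to $\sum_{j\in\mathrm{Star}(\sigma(\gamma))-\sigma(\gamma)}\langle m',v_j\rangle\bar D_j$, which vanishes by the defining linear relations of $H_\gamma$.

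\emph{Convergence.} For uniform convergence on an open subset of the parameter space one invokes projectivity of $\Sigma$: the chamber of the secondary fan containing $\Sigma$ provides a strictly convex piecewise linear function whose positivity bounds the terms of $L_{c,\gamma}$ and makes the series absolutely convergent, exactly as in the classical treatment of the Gamma series \cite{GKZ,stienstra,BH2}.

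\emph{Spanning.} The holonomic rank of $bbGKZ(C,0)$ was computed in \cite{BH1} and equals $\dim_\CC K_0(\PP_\Sigma)=\sum_{\gamma\in\mathrm{Box}(\Sigma)}\dim H_\gamma$. Thus it suffices to show that composing the Gamma series with linear functionals on $K_0(\PP_\Sigma)$ yields an injection into the solution space. Here is where I expect the main technical obstacle: one must separate the contributions of the different twisted sectors and of the different cohomology classes within a fixed $H_\gamma$. The separation between sectors is handled by the fractional exponents: the leading exponent $x_i^{\gamma_i}$ on the $\gamma$-component has monodromy $e^{2\pi\ii\gamma_i}$ around $x_i=0$, and these monodromy characters are distinct. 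Within a single $H_\gamma$, the leading term at the large-volume limit of the chamber of $\Sigma$ picks out, for each minimal $c$ in a suitable shift of the Mori cone, a polynomial in $\bar D_j$; linear independence of these polynomials follows from the fact that $\{\prod\bar D_j^{a_j}\}$ span $H_\gamma$ and are witnessed by distinct monomials in $\log x_i$ after expanding $x_i^{D_i/(2\pi\ii)}$. Combining these two separations gives the required injectivity and completes the proof.
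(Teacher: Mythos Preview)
Your proposal is correct and matches the paper's approach on the PDE verification and convergence: the paper likewise invokes the bijection $L_{c+v_i,\gamma}=L_{c,\gamma}-v_i$ together with the functional equation of $\Gamma$, and defers convergence to \cite{BH2}. Your treatment of the second equation in \eqref{gkz}, reducing $\sum_i\langle m,v_i\rangle D_i$ to a linear relation in $H_\gamma$ via $m'=m+\sum_{i\in\sigma(\gamma)}\langle m,v_i\rangle m_i$, is more explicit than the paper, which leaves this implicit.

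The one place where you diverge is the ``all solutions'' claim. The paper simply cites \cite[Theorem~3]{H}, whereas you outline a direct argument: match the holonomic rank of $bbGKZ(C,0)$ against $\dim K_0(\PP_\Sigma)$, then prove injectivity by separating twisted sectors via the semisimple monodromy characters $(\ee^{2\pi\ii\gamma_i})$ and separating classes within a fixed $H_\gamma$ by the $\log x_i$-polynomial structure of the leading terms. This is essentially the content of the cited reference, so it is not a genuinely different route but rather an unpacking of the citation. One small caution: the rank statement you attribute to \cite{BH1} is more precisely established in \cite{H}; you may wish to adjust the citation accordingly.
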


\begin{proof}
Convergence is routine and is proved along the lines of \cite{BH2} where the 
case of $c=0$ is treated in detail. We leave this to the reader. The fact that this is a 
solution follows from $L_{c+v_i,\gamma}=L_{c,\gamma}-v_i$ and the defining property of 
the Gamma function. The second statement is a special case of \cite[Theorem 3]{H}.
\end{proof}

We now define an analogous Gamma series with values in $K_c^0(\PP_\Sigma)$.
Again, we fix the branches of $\log x_i$.

\begin{definition}\label{def:gammac}
For each $c\in C^\circ$, each twisted sector $\gamma$ and each element of $L_{c,\gamma}$ 
consider 
the set $\sigma$ of $i$ with $l_i\in \ZZ_{< 0}$.
If $\sigma\cup\sigma(\gamma)$ is not in a  cone of $\Sigma$, 
then we set the notation $F_\sigma$  to zero. 
We define
$$
(\Gamma^\circ(x_1,\ldots, x_n))_c=
\bigoplus_\gamma\sum_{(l_i)\in L_{c,\gamma}}\prod_{i=1}^{n}
\frac {x_i^{l_i+\frac {D_i}{2\pi\ii}}}{\Gamma(1+l_i+\frac {D_i}{2\pi\ii})}
(\prod_{i\in \sigma}D_i^{-1})
F_{\sigma}.
$$
with $D_i=\log ch_\gamma(R_i\ee^{-2\pi\ii\gamma_i})$.
\end{definition}

\begin{proposition}\label{prop:gammac}
The collection of series $(\Gamma^\circ)_c$ for $c\in C^\circ$ defines a solution of 
$bbGKZ(C^\circ,0)$ with values in $K^c_0(\PP_\Sigma)$. If one composes this with 
linearly independent linear functions on $K^c_0(\PP_\Sigma),$ one gets 
linearly independent solutions.
\end{proposition}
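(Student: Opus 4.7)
The plan is to verify in sequence: (i) the series converges, (ii) each coefficient is a well-defined element of $K^c_0(\PP_\Sigma)$, (iii) the two families of hypergeometric equations hold, and (iv) composition with linearly independent functionals on $K^c_0(\PP_\Sigma)$ yields linearly independent scalar solutions. Convergence is identical to the argument for $\Gamma$ in Proposition \ref{prop:gamma} (cf.\ \cite{BH2}), since the extra factor $(\prod_{i\in\sigma} D_i^{-1}) F_\sigma$ takes values in the finite-dimensional space $H^c_\gamma$ and is uniformly bounded. Well-definedness is subtle only at indices $i \in \sigma$: for $l_i \in \ZZ_{<0}$ the reciprocal $1/\Gamma(1+l_i+D_i/(2\pi\ii))$ has a simple zero at $D_i = 0$ because $\Gamma$ has a simple pole at non-positive integers, and repeated application of $\Gamma(z+1) = z\Gamma(z)$ factors out exactly one $D_i$, which cancels the formal $D_i^{-1}$.

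For the box equations $\partial_i \Phi_c = \Phi_{c+v_i}$, differentiation implements the bijection $L_{c,\gamma}\iso L_{c+v_i,\gamma}$ by $l_i \mapsto l_i - 1$, and the Gamma factor shifts correctly via $\Gamma(z+1) = z\Gamma(z)$. The set $\sigma = \{j : l_j\in \ZZ_{<0}\}$ is unchanged in every case except when $\gamma_i = 0$ and $l_i = 0$, in which case the new $\sigma'$ equals $\sigma\cup\{i\}$. In this critical case, after the extra zero of $1/\Gamma$ absorbs the new $D_i^{-1}$, matching the two sides reduces to $D_i F_\sigma = F_{\sigma\cup\{i\}}$ when $\sigma\cup\{i\}\cup\sigma(\gamma) \in \Sigma$, and to $D_i F_\sigma = 0$ otherwise, which are exactly the relations \eqref{1} and \eqref{2} defining $H^c_\gamma$ (equivalently, \eqref{K1} and \eqref{K2} in $K^c_0(\PP_\Sigma)$).

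For the equations indexed by $m \in M$, the operator $x_i\partial_i$ multiplies each summand by $(l_i + D_i/(2\pi\ii))$. Summing with weights $\langle m, v_i\rangle$ yields two contributions: the numeric part $\sum_i \langle m, v_i\rangle l_i = -\langle m, c\rangle$ cancels $\langle m, c\rangle \Phi_c$, while the $D$-part is $(2\pi\ii)^{-1}\bigl(\sum_i \langle m, v_i\rangle D_i\bigr)\cdot(\prod_{j\in\sigma} D_j^{-1}) F_\sigma$. It thus suffices to prove $\sum_i \langle m, v_i\rangle D_i = 0$ in $H_\gamma$: setting $m' := m + \sum_{i\in\sigma(\gamma)} \langle m, v_i\rangle m_i$, the defining properties of the $m_i$'s give $m'\cdot v_k = 0$ for $k\in\sigma(\gamma)$ and identify $\sum_i \langle m, v_i\rangle D_i$ with $\sum_{j\in {\rm Star}(\sigma(\gamma))\setminus\sigma(\gamma)}\langle m', v_j\rangle \bar D_j$. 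Since $m'$ descends to a linear function on $N_\gamma$, this class lies in $Z_\gamma$ and vanishes in $H_\gamma$.

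Finally, for linear independence we expand $\Gamma^\circ$ through $ch^c$ of Proposition \ref{prop:chc} into the components $\bigoplus_\gamma H^c_\gamma$. In each sector $\gamma$, the leading monomial as $x\to 0$ along a suitable one-parameter subgroup is controlled by the term with $l_i = \gamma_i$, producing distinct exponentials $\prod_i x_i^{\gamma_i}$ across different twisted sectors; the remaining expansion then sweeps out a basis of $H^c_\gamma$ triangularly via the relations $D_i F_\sigma = F_{\sigma\cup\{i\}}$. Any functional $\phi$ on $K^c_0(\PP_\Sigma)$ with $\phi \circ \Gamma^\circ \equiv 0$ must therefore vanish identically. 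The main obstacle is making this leading-term argument rigorous: one must restrict $(x_1,\ldots,x_n)$ to a torus on which the $\gamma$-characters separate and adapt the argument of \cite[Theorem 3]{H} for the classical $\Gamma$ series, carefully tracking the extra factor $(\prod_{i\in\sigma} D_i^{-1})F_\sigma$ that encodes the module structure of $H^c_\gamma$ over $H_\gamma$.
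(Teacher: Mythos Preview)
Your verification of the differential equations is more detailed than the paper's (which simply says ``it is also clear that this gives a solution''), and that part is correct. However, there is a genuine gap in your well-definedness argument. You only check that the formal $D_i^{-1}$ is absorbed by the simple zero of $1/\Gamma$, but you never verify that $F_\sigma$ is a legitimate element of $H^c_\gamma$. Recall that the generators of $H^c_\gamma$ (and of $K^c_0(\PP_\Sigma)$) are indexed only by those $I$ with $\sigma_I^\circ \subseteq C^\circ$; the convention in Definition~\ref{def:gammac} sets $F_\sigma = 0$ when $\sigma\cup\sigma(\gamma)\notin\Sigma$, but says nothing about the case where $\sigma\cup\sigma(\gamma)\in\Sigma$ yet $\sigma$ lies on the boundary of $C_\gamma$. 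You must rule this out, and this is precisely the first paragraph of the paper's proof: for $c\in C^\circ$ and $(l_i)\in L_{c,\gamma}$, one takes a supporting linear function $h:C_\gamma\to\RR_{\geq 0}$ vanishing on $\sigma$ and computes
\[
0 > h(-c) = \sum_i l_i h(v_i) = \sum_{i\notin\sigma(\gamma)} l_i h(v_i) = \sum_{i\notin\sigma\cup\sigma(\gamma)} l_i h(v_i) \geq 0,
\]
a contradiction. Without this step, the target of your series is not known to lie in $\bigoplus_\gamma H^c_\gamma \cong K^c_0(\PP_\Sigma)$, and the entire statement is in jeopardy.

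For linear independence, the paper does not carry out a leading-term analysis from scratch but directly invokes \cite[Lemma~1~ii) and Prop.~4]{H}, noting that for each fixed $\gamma$ and $c$ a summand in $\Gamma^\circ$ is nonzero iff the corresponding summand in the Gamma series of loc.\ cit.\ (for $\beta=0$) is nonzero, and that they agree up to rescaling the $D_i$. Your sketch is heading in a plausible direction but, as you yourself acknowledge, is not yet a proof; the paper's route via \cite{H} is the cleaner way to close this.
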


\begin{proof}
We have $\sum_i l_i v_i = - c$, so $\sigma$ consisting of
$i$ with $l_i\in \ZZ_{< 0}$
can not be a boundary simplex of $\Sigma_\gamma$. Indeed, 
since $c \in C^\circ,$ the simplex $\sigma$ is non--empty.
If $C_\gamma$ denotes the support of the fan $\Sigma_\gamma$
and $h:C_\gamma \to \RR_{\geq 0}$ 
is supporting function vanishing on $\sigma$, then 
$$
0>h(-c) = \sum_{i} l_i h(v_i) = \sum_{i\not\in \sigma(\gamma)}l_i h(v_i) =
\sum_{i\not\in\sigma\cup \sigma(\gamma)} l_ih(v_i)\geq 0.
$$
Thus $\Gamma^\circ(x)_c$ is well-defined as an element of $\oplus_\gamma H_\gamma^c$,
which is naturally isomorphic to $K_0^c(\PP_\Sigma)$ by Prop. \ref{prop:chc}.

\medskip

Convergence is again straightforward. It is also clear that this gives a solution
of $bbGKZ(C^\circ,0)$. For
the linear independence statement, it is clearly enough to
prove the statement for a fixed twisted sector.  
The statement is then a direct
consequence of \cite[Lemma 1 ii) and Prop. 4]{H} applied to the case 
$\beta =0.$ For, it is enough to note that, for fixed
$\lambda$ and $c,$ a term in the above summation defining the 
Gamma series $\Gamma^\circ$ is non-zero if and only if 
the corresponding term in the Gamma series summation for $\beta=0$
in loc. cit. is non--zero. In fact, the terms coincide up
to a rescaling of the classes $D_i$ by the $2 \pi i$ factor. 
The proof of \cite[Prop. 4]{H} applies then 
without any changes. 
\end{proof}

\begin{corollary} If $bbGKZ(C,0)$ and $bbGKZ(C^\circ,0)$ 
denote the spaces of solutions to the corresponding systems, then
the Gamma series functions $\Gamma : K_0 (\PP_\Sigma)^\vee \to bbGKZ(C,0)$
and $\Gamma^\circ : K_0^c (\PP_\Sigma)^\vee \to bbGKZ(C^\circ,0)$ 
are isomorphisms of linear spaces. 
\end{corollary}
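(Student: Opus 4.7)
The corollary synthesizes Propositions \ref{prop:gamma} and \ref{prop:gammac}: the former gives surjectivity of $\Gamma$, the latter gives injectivity of $\Gamma^\circ$. What remains is injectivity of $\Gamma$ and surjectivity of $\Gamma^\circ$, which I would handle by a leading-term argument and a dimension count, respectively.

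For the injectivity of $\Gamma$, suppose $\Phi \in K_0(\PP_\Sigma)^\vee$ satisfies $\Phi\circ\Gamma \equiv 0$. Using the decomposition $K_0(\PP_\Sigma) \cong \bigoplus_\gamma H_\gamma$ of Proposition \ref{prop:ch}, the contribution of a twisted sector $\gamma$ to $(\Gamma(x))_c$ involves only powers $x_i^{l_i + D_i/(2\pi\ii)}$ with $l_i\equiv \gamma_i \pmod \ZZ$. Since the fractional vectors $(\gamma_i \bmod \ZZ)$ are distinct across twisted sectors, the sectorial contributions are linearly independent as formal Puiseux series in the $x_i$, and one reduces to showing $\Phi_\gamma = 0$ one $\gamma$ at a time. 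Within a fixed sector, choosing $c$ so that $L_{c,\gamma}$ has a unique minimal element (relative to an appropriate weight) isolates a nonzero leading monomial in $H_\gamma \otimes \CC[[x]]$; letting $c$ range then forces $\Phi_\gamma = 0$. This is exactly the mechanism used in Proposition \ref{prop:gammac} via \cite[Prop.~4]{H}, transplanted to $\Gamma$.

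For the surjectivity of $\Gamma^\circ$, the injectivity of $\Gamma$ just established upgrades Proposition \ref{prop:gamma} to the statement that $\Gamma$ is an isomorphism, so
$$\dim bbGKZ(C,0) = \dim K_0(\PP_\Sigma) = \dim K^c_0(\PP_\Sigma),$$
where the second equality comes from the perfect pairing of Corollary \ref{perfect}. It then suffices to show $\dim bbGKZ(C^\circ, 0) = \dim K^c_0(\PP_\Sigma)$; combined with injectivity from Proposition \ref{prop:gammac}, this will yield surjectivity. One proves this dimension equality by running the exhaustion argument behind \cite[Thm.~3]{H} in the $C^\circ$, $\beta = 0$ setting: every solution of $bbGKZ(C^\circ, 0)$ admits a Puiseux-type expansion that matches, after sorting by twisted sector and fractional exponents, some $\Gamma^\circ$ series evaluated against a functional on $K_0^c(\PP_\Sigma)$.

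The main obstacle is precisely this last exhaustion step for $\Gamma^\circ$. Unlike the $C$ case, the restriction map $bbGKZ(C,0) \to bbGKZ(C^\circ, 0)$ is neither injective nor surjective (cf.\ the remark after Definition \ref{bbgkz}), so the dimension bookkeeping cannot be borrowed directly from the $C$ case and instead requires either a separate holonomic-rank computation for the reduced $bbGKZ(C^\circ, 0)$ system, or the Ehrhart/Stanley accounting of \cite{Bor.str} applied to the image of the restriction map.
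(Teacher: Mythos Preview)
Your approach is substantially more cautious than the paper's, which dispatches the corollary in three sentences. For $\Gamma$, the paper simply says the isomorphism ``restates'' Proposition~\ref{prop:gamma}: it reads the clause ``one gets all solutions'' there as asserting a bijection via \cite[Theorem~3]{H}, not merely surjectivity, so your separate leading-term injectivity argument is work the paper regards as already absorbed into that citation.

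For $\Gamma^\circ$, you and the paper agree on the skeleton: establish $\dim K_0(\PP_\Sigma) = \dim K_0^c(\PP_\Sigma)$ and then invoke the injectivity from Proposition~\ref{prop:gammac}. You reach the dimension equality through the perfect pairing of Corollary~\ref{perfect}; the paper instead goes back to Section~\ref{sec:SR} to observe that each $H_\gamma^c$ is the linear dual of $H_\gamma$ and then sums over twisted sectors using Propositions~\ref{prop:ch} and~\ref{prop:chc}. Both routes yield the same equality, and in fact Corollary~\ref{perfect} is itself proved by reducing to the $H_\gamma$/$H_\gamma^c$ duality, so the two arguments are close cousins.

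Your identification of the ``main obstacle''---that one still needs $\dim bbGKZ(C^\circ,0) \le \dim K_0^c(\PP_\Sigma)$ to close the loop---is sharp. The paper's proof does not supply this inequality explicitly either; it ends with ``the result follows from the previous proposition,'' leaving the matching upper bound on the solution space to the ambient references \cite{BH1,H}. So your caution is well placed, and the exhaustion argument you sketch is a reasonable way to fill that in, but it is more than the paper itself spells out here.
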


\begin{proof} The first part simply restates 
the result of Proposition \ref{prop:gamma}. 
For the second part, note that for each $\gamma,$
$H^c_\gamma$ is the linear dual to $H_\gamma,$ by 
the results of Section \ref{sec:SR}. The isomorphisms
of Propositions \ref{prop:ch} and \ref{prop:chc} imply that
the complex vector spaces
$K_0 (\PP_\Sigma)$ and
$K_0^c (\PP_\Sigma)$ have the same dimension, and 
the result follows from the previous 
proposition.
\end{proof}

\begin{example}
We will now calculate the Gamma series in the case of  $C\subset \ZZ^2$ with 
$v_1=(0,1)$, $v_2=(1,1)$ and $v_3=(3,1)$ that we considered in Example \ref{keyex}.
We first calculate $\Gamma(x_1,x_2,x_3)$ with values in $\oplus_\gamma H_\gamma=
K_0(\PP_\Sigma)$.
The solution is uniquely determined by $(\Gamma(x_1,x_2,x_3))_{(0,0)}$ and 
$(\Gamma(x_1,x_2,x_3))_{(2,1)}$, because all other components of the solution
are partial derivatives of one or both of these two functions.

\medskip\noindent
We have $\gamma_1=(0,0)$ and $\gamma_2=(2,1)$. The sets $L_{c,\gamma}$ are 
summarized in the following table in terms of $(l_1,l_2,l_3)$. We also include in the 
table the case of $c=(1,1)$ which is used in the later calculation of the Gamma series
solution of $bbGKZ(C^\circ,0)$.
$$
\begin{array}{|l|l|}\hline
c,\gamma& L_{c,\gamma}\\ \hline
(0,0),(0,0) & \ZZ(2,-3,1)\\
(0,0),(2,1) & (-1,\frac 32,-\frac 12)+\ZZ(2,-3,1) \\
(2,1),(0,0) & (-1,1,-1)+\ZZ(2,-3,1) \\
(2,1),(2,1) & (0,-\frac 12,-\frac 12)+\ZZ(2,-3,1)\\ 
(1,1),(0,0) & (0,-1,0)+\ZZ(2,3,-1)\\
(1,1),(2,1) & (-1,\frac 12,-\frac 12)+\ZZ(2,3,-1)\\[.2em]
\hline
\end{array}
$$
We write the corresponding summations. To shorten the notation, we use 
$$
x=x_1^2x_2^{-3}x_3.
$$
For the twisted sector $\gamma=(2,1)$ 
the $D_i$ are $0$, therefore
$$
\begin{array}{ll}
\Gamma(x_1,x_2,x_3)_{(0,0)}
&=
\sum_{k} 
\frac{x^k x_1^{\frac {D_1}{2\pi\ii}}x_2^{\frac {D_2}{2\pi\ii}}x_3^{ \frac {D_3}{2\pi\ii}}}
{\Gamma(1+2k+\frac {D_1}{2\pi\ii})
\Gamma(1-3k+\frac {D_2}{2\pi\ii})
\Gamma(1+k+\frac { D_3}{2\pi\ii})}
\\
&
\oplus (x_1^{-1}x_2^{\frac 32}x_3^{-\frac 12})\sum_{k}\frac {x^k}
{\Gamma(2k)
\Gamma(\frac 52-3k)
\Gamma(\frac 12+k)}
\\
&
=
\sum_{k\geq 0} 
\frac{x^k (1+\frac {D_3}{2\pi\ii}\log x)
}
{\Gamma(1+2k+\frac {2D_3}{2\pi\ii})
\Gamma(1-3k-\frac {3D_3}{2\pi\ii})
\Gamma(1+k+\frac {D_3}{2\pi\ii})}
\\
&
\oplus (x_1^{-1}x_2^{\frac 32}x_3^{-\frac 12})\sum_{k\geq 1}\frac {x^k}
{(2k-1)!
\Gamma(\frac 52-3k)
\Gamma(\frac 12+k)}
\\
&
=1+ \frac {D_3}{2\pi\ii}\log x+\frac {3D_3}{2\pi \ii}
\sum_{k\geq 1} 
\frac{x^k  (3k-1)!(-1)^k}
{(2k)!k!}
\\
&
\oplus (x_1^{-1}x_2^{\frac 32}x_3^{-\frac 12})\sum_{k\geq 1}\frac {x^k}
{(2k-1)!
\Gamma(\frac 52-3k) \Gamma(\frac 12+k)}\\
&= 1+ \frac {D_3}{2\pi\ii} (\log x - 3x+\frac {15} 2  x^2+\ldots)
\\
&
\oplus (x_1^{-1}x_2^{\frac 32}x_3^{-\frac 12})
\frac 1\pi (-x + \frac {35}{24}x^2- \frac {3003}{640}x^3+\ldots).
\end{array}
$$
A similar calculation which we omit shows that 
$$
\begin{array}{ll}
\Gamma(x_1,x_2,x_3)_{(2,1)}
&=
\frac {D_3}{2\pi\ii}(x_1^{-1}x_2x_3^{-1})
(3x-12x^2+63x^3+\ldots)
\\
&
\oplus (x_2^{-\frac 12}x_3^{-\frac 12})
\frac 1\pi(1-\frac {15}8x +  \frac{1155}{128}x^2+\ldots).
\end{array}
$$

\medskip\noindent
We will now write down the Gamma series solution $\Gamma^\circ$  of $bbGKZ(C^\circ, 0)$ with values in $K_0^c(\PP_\Sigma)$. It is determined uniquely by 
$\Gamma^\circ(x_1,x_2,x_3)_{(1,1)}$ and $\Gamma^\circ(x_1,x_2,x_3)_{(2,1)}$.
This calculation is more delicate, as we need to use the values of the derivative
of the Gamma function. We use the standard results about the special values of 
the logarithmic derivative $\psi$ of the Gamma function to find the following.
$$
\begin{array}{ll}
&\Gamma^{\circ}_{(1,1)} 
= x_2^{-1}\sum_{k} \frac{x^k(1+\frac {D_3}{2\pi\ii}\log x)}
{
\Gamma(1+2k+\frac {D_1}{2\pi\ii})
\Gamma(-3k+\frac {D_2}{2\pi\ii})
\Gamma(1+k+\frac {D_3}{2\pi\ii})
}D_2^{-1}F_2
\\
&
\oplus x_1^{-1}x_2^{\frac 12}x_3^{-\frac 12}
\sum_{k} 
\frac 
{x^k
}
{\Gamma(2k)\Gamma(\frac 32-3k)\Gamma(\frac 12+k)}
\bar F_\emptyset
\\
&
= x_2^{-1}\sum_{k\geq 0} \frac{x^k(1+\frac {D_3}{2\pi\ii}\log x)
\Gamma(3k+1+3\frac {D_3}{2\pi\ii})(-1)^{k}
}
{
\Gamma(1+2k+\frac {2D_3}{2\pi\ii})
\Gamma(1+k+\frac {D_3}{2\pi\ii})
}\frac{1}{2\pi\ii}
F_2
\\
&
\oplus x_1^{-1}x_2^{\frac 12}x_3^{-\frac 12}
\sum_{k\geq 1} 
\frac 
{x^k
}
{\Gamma(2k)\Gamma(\frac 32-3k)\Gamma(\frac 12+k)}
\bar F_\emptyset
\\
&
= x_2^{-1}\sum_{k\geq 0} \frac{x^k(-1)^{k}(3k)!}
{
(2k)!k!}\frac{1}{2\pi\ii}
F_2
+
x_2^{-1}\sum_{k\geq 0} 
\frac{x^k(-1)^{k}(3k)!}{(2k)!k!}
\\&
\cdot(\log x+3\psi(3k+1)-2\psi(2k+1)-\psi(k+1))
\frac {D_3}{2\pi\ii}F_2
\\
&
\oplus x_1^{-1}x_2^{\frac 12}x_3^{-\frac 12}
\sum_{k\geq 1} 
\frac 
{x^k
}
{\Gamma(2k)\Gamma(\frac 32-3k)\Gamma(\frac 12+k)}
\bar F_\emptyset
\\&
=x_2^{-1}(1-3x+15x^2+\ldots)\frac{1}{2\pi\ii}
F_2
\\&
+x_2^{-1}(\log x(1-3x+15x^2+\ldots)-\frac 92 x+\frac {101}4x^2+\ldots)\frac {D_3}{2\pi\ii}\frac{1}{2\pi\ii}
F_2
\\
&
\oplus
 x_1^{-1}x_2^{\frac 12}x_3^{-\frac 12}
( \frac 32 x-\frac {105}{16}x^2
+\frac {9009}{256}x^3
+\ldots 
)
\frac 1\pi
\bar F_\emptyset
\end{array}
$$
Similarly,
$$
\begin{array}{ll}

&\Gamma^{\circ}_{(2,1)}
=
x_1^{-1}x_2x_3^{-1}\sum_{k\geq 1} 
\frac{x^k(1+\frac {D_3}{2\pi\ii}\log x)}{\Gamma(2k+\frac {D_1}{2\pi\ii})
\Gamma(2-3k+\frac {D_2}{2\pi\ii})
\Gamma(k+\frac { D_3}{2\pi\ii})}
D_2^{-1}F_2
\\
&
\oplus x_2^{-\frac 12}x_3^{-\frac 12}\sum_{k\geq 0}\frac {x^k}
{\Gamma(1+2k)
\Gamma(\frac 12-3k)
\Gamma(\frac 12+k)}\bar F_\emptyset
\\
&
=x_1^{-1}x_2x_3^{-1}\sum_{k\geq 1} 
\frac{x^k(1+\frac {D_3}{2\pi\ii}\log x)
\Gamma(3k-1+\frac {3D_3}{2\pi\ii})(-1)^k
}
{\Gamma(2k+\frac {D_1}{2\pi\ii})
\Gamma(k+\frac { D_3}{2\pi\ii})}
\frac 1{2\pi\ii}F_2
\\&
\oplus x_2^{-\frac 12}x_3^{-\frac 12}\sum_{k\geq 0}\frac {x^k}
{\Gamma(1+2k)
\Gamma(\frac 12-3k)
\Gamma(\frac 12+k)} \bar F_\emptyset
\\
&
=x_1^{-1}x_2x_3^{-1}\sum_{k\geq 1} 
\frac{x^k
\Gamma(3k-1)(-1)^k}
{\Gamma(2k)\Gamma(k)}
\frac 1{2\pi\ii}F_2 \\
&+x_1^{-1}x_2x_3^{-1}
 \sum_{k\geq 1} 
\frac{x^k
\Gamma(3k-1)(-1)^k}
{\Gamma(2k)\Gamma(k)} \\
&\cdot
(\log x +3\psi(3k-1)-2\psi(2k)-\psi(k))
\frac {D_3}{2\pi\ii}\frac 1{2\pi\ii}F_2
\\&
\oplus \frac 1{\pi}x_2^{-\frac 12}x_3^{-\frac 12}
(1-\frac {15}8 x +\frac {1155}{128}x^2+\ldots) \bar F_\emptyset
\\
&
=x_1^{-1}x_2x_3^{-1}
(-x+4x^2-21x^3+\ldots)
\frac 1{2\pi\ii}F_2
\\
&
+x_1^{-1}x_2x_3^{-1}(\log x(-x+4x^2-21x^3+\ldots)-x+\frac {19}3x^2-\ldots)
\frac {D_3}{2\pi\ii}\frac 1{2\pi\ii}F_2
\\&
\oplus x_2^{-\frac 12}x_3^{-\frac 12}
(1-\frac {15}8 x +\frac {1155}{128}x^2+\ldots) 
\frac 1\pi\bar F_\emptyset.
\end{array}
$$
\end{example}

\begin{remark}
The solutions to $bbGKZ(C,0)$  can be written
in terms of elementary functions.
We will spare the reader the lengthy calculation
that we performed using various symbolic manipulation software packages. Our
method was to rewrite the PDE as third order ordinary differential equations with 
respect to $x$ and then solve them symbolically using standard software. The solutions 
are then matched to the Gamma series using the asymptotic expansions at $x=0$. The final
answers are that $\Gamma_{(0,0)}$ can be written as the function
$$
\begin{array}{l}
1 + \frac{D_3}{2\pi\ii}  
\Big(3 \log\Big(( \sqrt x  + \sqrt{ 4/27 + x} )^{\frac 13}
 -
(- \sqrt x  + \sqrt{ 4/27 + x} )^{\frac 13}\Big) \\
   - \frac 12\log (4x) \Big) 
\oplus
(-\frac 2\pi)  
  \arctan \Big(
  \frac {-2^{\frac 23} + 3 (\sqrt x + \sqrt{4/27 + x})^\frac 23}
  {\sqrt 3 (2^{\frac 23} + 3  (\sqrt x + \sqrt{4/27 + x})^\frac 23)}
  \Big),
\end{array}
$$
and $\Gamma_{(2,1)}$ as 
$$
\begin{array}{l}
\frac {D_3}{2\pi\ii}
x_1^{-1}x_2x_3^{-1} 
\frac{9\cdot 2^{\frac 43}x}
{
2^{4/3} +
(-\sqrt {27 x} + \sqrt{4+27x} )^{\frac 43} +
(\sqrt {27 x} + \sqrt{4+27x} )^{\frac 43} 
}\\
\oplus
\frac {x_2^{-\frac 12}x_3^{-\frac 12}}{2^{\frac 23}\pi}
\Big(\frac{(-\sqrt{27 x} + \sqrt{4+27 x})^{\frac 23} + (\sqrt{27 x} + \sqrt{4+27 x})^{\frac 23})
}
{ \sqrt {4+27 x}}\Big).
\end{array}
$$
We were unable to find an analogous description of $\Gamma^\circ$. It appears that the 
coefficient of $D_3F_2$ is not an elementary function.
\end{remark}

\section{The conjectures}\label{sec:conj}
We are now ready to formulate several conjectures regarding the Gamma series 
constructions of this paper. In the subsequent sections we will explain the evidence that 
supports them.

\medskip

We expect the  Gamma series solutions of Definitions \ref{def:gamma} and 
\ref{def:gammac} to be compatible 
with pullback-pushforward and analytic continuation. Specifically, let
$\Sigma_1$ and $\Sigma_2$ be two adjacent triangulations, so that 
the stacks $\PP_{\Sigma_1}$ and $\PP_{\Sigma_2}$ differ by a 
flop that's a composition of weighted blowup and weighted blowdown.
There is a natural pullback-pushforward functor from the derived category 
of coherent sheaves on $\PP_{\Sigma_1}$ to that of $\PP_{\Sigma_2}$.
This functor preserves the property of support of cohomology supported on
a compact toric substack.
Indeed, the maps are compatible with the restriction to the 
preimage of the complement of the origin. As a consequence
of \cite[Theorem 4.2]{Kaw1}, the 
pullback-pushforward functors $D(\PP_{\Sigma_1}) \to 
D(\PP_{\Sigma_2})$ and $D^c(\PP_{\Sigma_1}) \to 
D^c(\PP_{\Sigma_2})$ are equivalences. 
They induce natural group isomorphisms
$pp:K_0(\PP_{\Sigma_1})\to K_0(\PP_{\Sigma_2})$ and 
$pp:K_0^c(\PP_{\Sigma_1})\to K_0^c(\PP_{\Sigma_2})$.

\medskip
\begin{conjecture}\label{conj.ac}
The following diagrams of isomorphisms are commutative
$$
\begin{array}{ccc}
K_0(\PP_{\Sigma_2})^\dual&\stackrel{pp^\dual}\longrightarrow& K_0(\PP_{\Sigma_1})^\dual
\\[.3em]
\Gamma\downarrow~{~}&                             &~{~} \downarrow\Gamma
\\[.3em]
bbGKZ(C,0) &\stackrel{a.c.}\longrightarrow&bbGKZ(C,0)
\end{array}
$$
$$
\begin{array}{ccc}
K^c_0(\PP_{\Sigma_2})^\dual&\stackrel{pp^\dual}\longrightarrow& K^c_0(\PP_{\Sigma_1})^\dual
\\[.3em]
\Gamma^\circ\downarrow~{~}&                             & ~{~}\downarrow\Gamma^\circ
\\[.3em]
bbGKZ(C^\circ,0) &\stackrel{a.c.}\longrightarrow&bbGKZ(C^\circ,0)
\end{array}
$$
where the top rows are the duals of the maps induced by the pullback-pushforward
derived functors,
and the bottom rows are analytic continuations along  a certain path in the 
domain of parameters considered in \cite{BH2}.
\end{conjecture}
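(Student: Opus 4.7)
The plan is to reduce the two commutative diagrams to a single wall-crossing of the secondary fan, and then to match the analytic continuation against the pullback-pushforward functor by a direct computation using a contour integral representation of the Gamma series.

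First, since the triangulations $\Sigma_1$ and $\Sigma_2$ are adjacent, they are separated by a codimension-one wall of the secondary fan, corresponding to a minimal circuit $\sum_i a_i v_i = 0$ with $a_i \in \ZZ$, $a_i > 0$ on one side of the circuit, $a_i < 0$ on the other, and $a_i = 0$ otherwise. The two triangulations differ only within the subfan supported on the affine span of the circuit. I would first localize the statement at the twisted sectors affected by the wall using the decomposition $K_0(\PP_\Sigma) \cong \bigoplus_\gamma H_\gamma$ of Proposition~\ref{prop:ch} and its $K_0^c$-analogue Proposition~\ref{prop:chc}, so that the verification reduces to a sector-by-sector computation in which only the sectors meeting the circuit contribute anything nontrivial.

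Second, I would rewrite each component of the Gamma series from Definitions~\ref{def:gamma} and~\ref{def:gammac} as a Mellin--Barnes contour integral in auxiliary variables parametrizing the lattices $L_{c,\gamma}$. Under this reformulation the analytic continuation between the two chambers of the secondary fan translates into shifting contours through a family of poles indexed by the circuit. Collecting the residues and applying the reflection identity $\Gamma(z)\Gamma(1-z) = \pi/\sin(\pi z)$ should yield another series which, after identification of exponents, equals the $\Sigma_2$-Gamma series composed with an explicit linear operator $T\colon K_0(\PP_{\Sigma_1}) \to K_0(\PP_{\Sigma_2})$ (and analogously a $T^c$ on $K_0^c$).

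Third, I would identify $T$ with the dual of $pp$. For the $K_0$-version the toric flop admits an explicit Fourier--Mukai kernel built from the Koszul complex of the circuit, as in the argument underlying \cite[Theorem 4.2]{Kaw1} and its toric incarnations; composing with the Chern character isomorphism $ch$ of Proposition~\ref{prop:ch} the induced $K$-theoretic transformation is governed by exactly the Gamma-function identities produced by the residue calculation, because both ultimately encode the same ratio of Todd-type classes of the two small resolutions of the circuit locus. The $K_0^c$-statement should follow by the same argument applied to $ch^c$ of Proposition~\ref{prop:chc}, using compatibility of the derived equivalence with the natural forgetful functor $D^c(\PP_\Sigma) \to D(\PP_\Sigma)$ and with the perfect pairing of Corollary~\ref{perfect}.

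The main obstacle I anticipate is the contour bookkeeping in the Mellin--Barnes representation, especially for $bbGKZ(C^\circ,0)$, where the generators $F_\sigma$ of Definition~\ref{def:gammac} impose a subtle interaction between which lattice points $c \in C^\circ$ and which twisted sectors contribute residues on each side of the wall; keeping track of which $\sigma \cup \sigma(\gamma)$ remain inside $\Sigma_2$ under the flip is delicate. A secondary difficulty is the lack of a closed-form description of $pp$ on the explicit generators $G_I$ of $K_0^c(\PP_\Sigma)$: although \cite[Theorem 4.2]{Kaw1} guarantees that $pp$ is an equivalence, matching it against the outcome of the residue calculation will likely require comparing both sides only after passage through $ch^c$, rather than at the level of the presentation of $K_0^c(\PP_\Sigma)$ by generators and relations.
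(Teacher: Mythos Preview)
The statement you are attempting to prove is labeled a \emph{Conjecture} in the paper, and the paper does not supply a proof of it in general. There is therefore no ``paper's own proof'' to compare your attempt against. What the paper does contain is (i) a verification of the conjecture in the single example $v_1=(0,1)$, $v_2=(1,1)$, $v_3=(3,1)$, carried out by explicit computation of the Gamma series and the pairing, and (ii) a remark immediately following the conjecture stating that the Mellin--Barnes methods of \cite{BH2} appear plausible for the general case but that ``the details are far from settled at this time.''

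Your proposal is not a proof but a strategy outline, and you acknowledge this yourself by listing the anticipated obstacles. The strategy you sketch---reduction to a single wall-crossing, rewriting the Gamma series as Mellin--Barnes integrals, shifting contours, and matching the resulting residue sum against the Fourier--Mukai kernel of the flop---is precisely the approach of \cite{BH2} that the paper's remark alludes to as a possible route. So your plan is aligned with what the authors suggest, but it does not go beyond what they already indicate is incomplete. In particular, the two difficulties you flag (the contour bookkeeping for $bbGKZ(C^\circ,0)$ with its $F_\sigma$ generators, and the lack of an explicit formula for $pp$ on the generators $G_I$) are exactly the places where the argument of \cite{BH2} would need genuine new work to extend to the better-behaved system, and you have not supplied that work. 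Until those steps are actually carried out, what you have is a restatement of the expected method, not a proof.
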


\begin{remark}
The motivation behind this conjecture comes from Homological Mirror Symmetry.
A particular case of this phenomenon was first observed in the second author's thesis.
It has been extended to the case of stacks in \cite{BH2}. However, in that 
paper we were using the original GKZ system, rather than a better-behaved one,
so the vertical maps were not isomorphisms in general.
The advantage of the better-behaved version is that all of the maps in the above conjecture
are isomorphisms.
It appears plausible that  methods similar to those of \cite{BH2} will suffice in
our case. However, the details are far from settled at this time. 
\end{remark}

\medskip

It  seems plausible from the approach of 
\cite{MMW,Walther1} that the systems of PDEs 
$bbGKZ(C,0)$ 
and $bbGKZ(C^\circ,0)$ are dual to each other. We formulate here a more specific
conjectural mechanism of such duality.
\begin{conjecture}\label{conj.pairing}
There exists a collection $p_{c,d}(x_1,\ldots,x_n)$ of polynomials
indexed by 
$c\in C$, $d\in C^\circ,$ such that the following hold.
\begin{itemize}
\item All but a finite number of $p_{c,d}$ are zero.

\item For any pair of solutions $(\Phi_c)$ and $(\Psi_d)$ of $bbGKZ(C,0)$ and
$bbGKZ(C^\circ,0)$ respectively the sum
\begin{equation}\label{eq:pairing}
\sum_{c,d} p_{c,d}\Phi_c\Psi_d
\end{equation}
is constant as a function of $(x_1,\ldots,x_n)$.

\item
The pairing provided by (\ref{eq:pairing}) is non-degenerate. 

\item
For any projective simplicial subdivision $\Sigma$, 
the pairing provided by (\ref{eq:pairing}) is the inverse of the 
Euler characteristics pairing between $K_0(\PP_\Sigma)$ and 
$K^c_0(\PP_\Sigma)$ under the $\Gamma$ and $\Gamma^\circ$ series.
\end{itemize}
\end{conjecture}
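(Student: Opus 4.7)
The plan is to reformulate Conjecture \ref{conj.pairing} as a linear-algebra statement in the finite-dimensional spaces $K_0(\PP_\Sigma)$ and $K^c_0(\PP_\Sigma)$ for a fixed projective simplicial subdivision $\Sigma$, and then to verify polynomiality, finiteness, and non-degeneracy. By Propositions \ref{prop:gamma} and \ref{prop:gammac}, the Gamma and Gamma$^\circ$ series give linear isomorphisms from $K_0(\PP_\Sigma)^\dual$ and $K^c_0(\PP_\Sigma)^\dual$ to the solution spaces, so any bilinear form on solutions corresponds to a tensor in $K_0(\PP_\Sigma)\otimes K^c_0(\PP_\Sigma)$. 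The natural candidate is $\chi_\Sigma^{-1}$, the tensor inverse to the perfect pairing of Corollary \ref{perfect}. Concretely, I would show that the $K_0(\PP_\Sigma)\otimes K^c_0(\PP_\Sigma)$-valued function $\sum_{c,d} p_{c,d}(x)\,\Gamma_c(x)\otimes\Gamma^\circ_d(x)$ can be arranged to coincide with the constant tensor $\chi_\Sigma^{-1}$, and then verify that the resulting $p_{c,d}$ are $\Sigma$-independent polynomials with only finitely many nonzero entries.

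For existence, I would use the recursions $\partial_i\Gamma_c=\Gamma_{c+v_i}$ and $\partial_i\Gamma^\circ_d=\Gamma^\circ_{d+v_i}$ together with the Euler-type relations \eqref{gkz}. These imply that $\{\Gamma_c\}_{c\in C}$ and $\{\Gamma^\circ_d\}_{d\in C^\circ}$ span finitely generated modules of ranks $\dim K_0(\PP_\Sigma)$ and $\dim K^c_0(\PP_\Sigma)$ over a suitable localization of $\CC[x_1,\ldots,x_n]$. Expanding $\chi_\Sigma^{-1}$ in a finite generating set then yields a priori rational coefficients $p_{c,d}$ supported on a finite index set. The $\Sigma$-independence of these coefficients should follow from the fact that pullback-pushforward derived equivalences preserve the Euler form, combined with the compatibility between analytic continuation and $pp^\dual$ asserted in Conjecture \ref{conj.ac}; alternatively one may treat Conjectures \ref{conj.ac} and \ref{conj.pairing} in tandem, since $\Sigma$-independence of the pairing is closely tied to the analytic continuation statement.

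The main obstacle, and the step I expect to require the most work, is promoting the a priori rational $p_{c,d}$ to polynomials. The most promising route is to construct the $p_{c,d}$ explicitly by analogy with the residue formula of Proposition \ref{hrr} and the piecewise-polynomial integration map of Proposition \ref{int}: I would look for a combinatorial formula indexed by maximal simplices $J$ of $\Sigma$ with twisted-sector refinements over ${\rm Box}(J)$ whose summands are manifestly polynomial. Polynomiality of the abstractly-defined $p_{c,d}$ can alternatively be verified by matching fractional-exponent asymptotics as $x\to 0$ in a fixed $\Sigma$-chamber: the exponents appearing in $\Gamma_c(x)\otimes\Gamma^\circ_d(x)$ lie in a translate of the relation lattice, and the requirement that $\sum p_{c,d}\,\Gamma_c\otimes\Gamma^\circ_d$ be constant forces cancellation of all non-constant exponents, which given the specific shape of the Gamma series should preclude poles in $p_{c,d}$. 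Non-degeneracy of the pairing is then immediate from perfectness of $\chi_\Sigma$ (Corollary \ref{perfect}), and compatibility with the Euler pairing for every $\Sigma$ is exactly what the construction achieves. As a sanity check I would first implement the entire plan in Example \ref{keyex}, where both Gamma series have already been expanded to several orders.
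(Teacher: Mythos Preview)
The statement you are attempting to prove is stated in the paper as a \emph{conjecture}, and the paper does not claim or contain a general proof. What the paper does is (i) verify the conjecture by explicit computation in the running example $v_1=(0,1)$, $v_2=(1,1)$, $v_3=(3,1)$ (Proposition~\ref{explicit} and the discussion following it, for both fans on this cone), and (ii) provide partial evidence by exhibiting the ``pairing with $1$'' part via the logarithmic Hessian (Propositions~\ref{const} and~\ref{pairingtest}). So there is no ``paper's own proof'' to compare against beyond these brute-force checks.

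Your proposal is a plausible strategic outline, but it has genuine gaps that the paper itself flags as open. First, your argument for $\Sigma$-independence of the $p_{c,d}$ invokes Conjecture~\ref{conj.ac}, which is equally unproven; this is circular unless you supply an independent proof of the analytic continuation statement, and the paper explicitly says ``the details are far from settled at this time.'' Second, and more seriously, your existence step only produces coefficients in a \emph{localization} of $\CC[x_1,\ldots,x_n]$, i.e.\ rational functions; the passage to honest polynomials is exactly the content of the conjecture and you do not have a mechanism for it. The asymptotic argument you sketch (matching fractional exponents as $x\to 0$) can at best control behavior near one degeneracy point, not rule out poles elsewhere, and the ``combinatorial formula indexed by maximal simplices'' is a hope rather than a construction. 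Indeed the paper's final remark records that the authors' own natural guess for a closed-form $p_{c,d}$ via the Hessian already fails in the basic example and required an ad hoc correction term. In short, your plan correctly identifies the linear-algebra reformulation and the role of Corollary~\ref{perfect}, but the two hard steps---polynomiality and $\Sigma$-independence---remain genuinely open, which is why the statement is a conjecture.
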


\begin{remark}
The polynomials $p_{c,d}$ are not unique in view of the relations between 
the components of $\Phi$ and $\Psi$. However, one may hope for a
somewhat special choice of $p$.
\end{remark}

\begin{remark}
Conjectures \ref{conj.ac} and \ref{conj.pairing} can likely be categorified
to produce two families of triangulated categories over the parameter space
of $x$ such that in the neighborhoods of toric degeneracy points the Gamma series
provide some kind of character map to the corresponding $K$-theory. However,
such categories have not yet been constructed.
\end{remark}

\section{Example: The pairing}\label{sec8}
Consider the spaces of solutions $(\Phi)_c$ and $(\Psi)_d$ for $c\in C$, $d\in C^\circ$
of $bbGKZ(C,0)$ and $bbGKZ(C^\circ,0)$ respectively for the cone spanned
by $v_1=(0,1)$, $v_2=(1,1)$, $v_3=(3,1)$. Recall that these are collections of functions of $x_1,x_2,x_3$.

\begin{proposition}\label{explicit}
For each $\Phi$ and $\Psi$ define
$$
\langle \Phi,\Psi\rangle =-2x_2x_3\Phi_{(2,1)}\Psi_{(2,1)}+
\sum_{\stackrel{c\in C,d\in C^\circ}{ c+d = v_1+v_2}} x_1x_2 \Phi_c\Psi_d (-1)^{\deg c} 
$$
$$+\sum_{\stackrel{c\in C,d\in C^\circ}{ c+d = v_1+v_3}}9x_1x_3 \Phi_c\Psi_d (-1)^{\deg c} 
+\sum_{\stackrel{c\in C,d\in C^\circ}{ c+d = v_2+v_3}} 4x_2x_3 \Phi_c\Psi_d (-1)^{\deg c}
$$
where $\deg (a,b)=b$. 
Then this function of $(x_1,x_2,x_3)$ is constant. Moreover, the resulting pairing of the 
solution spaces is perfect.
\end{proposition}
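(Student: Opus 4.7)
The plan is to prove constancy of $\langle\Phi,\Psi\rangle$ in $(x_1,x_2,x_3)$ and nondegeneracy of the resulting pairing on the two three-dimensional solution spaces as two independent steps, organized around torus invariance.

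The key structural observation is that each nonzero polynomial $p_{c,d}$ in the statement is a monomial $x^\mu = x_1^{\mu_1}x_2^{\mu_2}x_3^{\mu_3}$ whose exponent satisfies $\sum_i\mu_i v_i = c+d$. Combined with the Euler equations of $bbGKZ(C,0)$ and $bbGKZ(C^\circ,0)$, which assert $\sum_i\langle m,v_i\rangle x_i\partial_i\Phi_c = -\langle m,c\rangle\Phi_c$ and similarly for $\Psi$, this forces every summand $p_{c,d}\Phi_c\Psi_d$ to be annihilated by each Euler vector field $\sum_i\langle m,v_i\rangle x_i\partial_i$ with $m\in N^\dual$. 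Hence $\langle\Phi,\Psi\rangle$ is invariant under the two-parameter torus and depends only on the single Frobenius invariant $x = x_1^2 x_2^{-3}x_3$.

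For constancy in $x$, I would differentiate using the first-order $bbGKZ$ relations $\partial_{x_i}\Phi_c = \Phi_{c+v_i}$ and $\partial_{x_i}\Psi_d = \Psi_{d+v_i}$, reindex, and verify that the coefficient of each $\Phi_{c'}\Psi_{d'}$ vanishes modulo the Euler relations. Equivalently, one can substitute the explicit series $\Gamma$ and $\Gamma^\circ$ computed in Section \ref{gamex}, using in particular the closed-form expressions in the concluding remark of that section for $\Gamma_{(0,0)}$ and $\Gamma_{(2,1)}$, and compare coefficients in the resulting expansion in $x$, $\log x$, and the half-integer twist $x^{1/2}$. For nondegeneracy, I would plug the explicit Gamma-series bases into $\langle\,,\,\rangle$ and compute the resulting $3\times 3$ matrix of constants; consistent with Conjecture \ref{conj.pairing}, this matrix should match the inverse of the Euler pairing matrix \eqref{pairex} from Example \ref{pairingexample}, and verifying this identity yields both nondegeneracy and a strong consistency check for the coefficients $-2,1,9,4$ appearing in the statement.

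The principal obstacle is computational volume rather than conceptual. After torus invariance reduces the problem to one variable $x$, the constancy and pairing verifications amount to matching transcendental series with $\log x$ and half-integer powers across finitely many summands; this is tedious but mechanical. The nontrivial input is the explicit Gamma-series data from Section \ref{gamex}, which makes the matrix computation and the final comparison with \eqref{pairex} finite and feasible.
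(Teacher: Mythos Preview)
Your proposal is correct and follows the paper's route: use the Euler relations to reduce $\langle\Phi,\Psi\rangle$ to a function of $x=x_1^2x_2^{-3}x_3$ alone, then differentiate (the paper takes $\partial/\partial x_2$) and cancel using the $bbGKZ$ relations, and finally establish nondegeneracy by evaluating on the Gamma-series bases and comparing with the Euler pairing \eqref{pairex}. One caution on your ``equivalently'' clause: the paper explicitly notes that $\Gamma^\circ$ admits no known elementary closed form, so substituting the series and comparing coefficients in $x$, $\log x$, $x^{1/2}$ is not a finite check and does not by itself prove constancy to all orders---the differentiation argument is the one that actually closes.
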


\begin{proof}
The 
linear relations on $\Phi_c$ and  $\Psi_d$ imply that $\langle \Phi,\Psi\rangle$
depends on $x_1^2x_2^{-3}x_3$ only.  So to prove that it is constant we only need
to verify $\frac\partial{\partial x_2}\langle \Phi,\Psi\rangle=0$. This is a consequence of 
the relations on $\Phi$ and $\Psi$ as follows. We have
$$
\begin{array}{rl}
\langle \Phi,\Psi\rangle
=&x_1x_2(\Phi_{(0,0)}\Psi_{(1,2)}-\Phi_{(0,1)}\Psi_{(1,1)})
\\
+&9x_1x_3(\Phi_{(0,0)}\Psi_{(3,2)}-\Phi_{(1,1)}\Psi_{(2,1)}-\Phi_{(2,1)}\Psi_{(1,1)})
\\
+&x_2x_3(4\Phi_{(0,0)}\Psi_{(4,2)}-6\Phi_{(2,1)}\Psi_{(2,1)}-4\Phi_{(3,1)}\Psi_{(1,1)}),
\end{array}
$$
so
$$
\begin{array}{rl}
\frac\partial{\partial x_2} & \langle \Phi,\Psi\rangle
=
x_1(\Phi_{(0,0)}\Psi_{(1,2)}-\Phi_{(0,1)}\Psi_{(1,1)})
\\
+&x_3(4\Phi_{(0,0)}\Psi_{(4,2)}-6\Phi_{(2,1)}\Psi_{(2,1)}-4\Phi_{(3,1)}\Psi_{(1,1)})
\\
+&x_1x_2(\Phi_{(1,1)}\Psi_{(1,2)}+\Phi_{(0,0)}\Psi_{(2,3)}-\Phi_{(1,2)}\Psi_{(1,1)}
-\Phi_{(0,1)}\Psi_{(2,2)})
\\
+
&9x_1x_3(\Phi_{(0,0)}\Psi_{(4,3)}-
\Phi_{(2,2)}\Psi_{(2,1)}-
\Phi_{(3,2)}\Psi_{(1,1)}-
\Phi_{(2,1)}\Psi_{(2,2)})
\\
+
&
x_2x_3(4\Phi_{(1,1)}\Psi_{(4,2)}+
4\Phi_{(0,0)}\Psi_{(5,3)}-
6\Phi_{(3,2)}\Psi_{(2,1)}
\\
-
&6\Phi_{(2,1)}\Psi_{(3,2)}-
4\Phi_{(4,2)}\Psi_{(1,1)}-
4\Phi_{(3,1)}\Psi_{(2,2)}).
\end{array}
$$
We use equations 
$$
x_2\Phi_{(a+1,b+1)}+3x_3\Phi_{(a+3,b+1)}+a\Phi_{(a,b)}=0
$$
and similarly for $\Psi$ to get rid of $x_1x_2$ terms above.
We use equations 
$$
3x_1\Phi_{(a,b+1)}+2x_2\Phi_{(a+1,b+1)}+(3b-a)\Phi_{(a,b)}=0
$$
and similarly for $\Psi$ to get rid of $x_2x_3$ terms.
After cancellations, we get
$$
\frac\partial{\partial x_2}\langle \Phi,\Psi\rangle
=3x_1x_3(\Phi_{(3,1)}\Psi_{(1,2)} - \Phi_{(0,1)}\Psi_{(4,2)})
$$
which equals zero in view of relations
$$
x_1\Phi_{(0,1)}-2x_3\Phi_{(3,1)} = x_1\Psi_{(1,2)}-2x_3\Psi_{(4,2)}=0.
$$
We have thus proved that 
$\langle \Phi,\Psi\rangle$ is constant.

\medskip

To verify that the pairing is perfect, we will analyze its results on the 
Gamma series solutions from Section \ref{gamex}. It is enough to calculate
the leading terms of the expressions. 
We use $e_{(0,0)}$ and $e_{(2,1)}$ as notation
to designate the identity parts in different twisted sectors for the $\Gamma$ series.
This calculation shows
\begin{equation}\label{ggc}
\langle \Gamma, \Gamma^\circ \rangle
=-\frac 3{2\pi^2} (e_{(0,0)} \otimes D_3F_2 + 4e_{(2,1)} \otimes \bar F_\emptyset
- D_3e_{(0,0)}\otimes F_2)
\end{equation}
in the tensor product $(\oplus_\gamma H_\gamma) \otimes (\oplus_\gamma H_\gamma^c)$.
\end{proof}

We now want to verify Conjecture \ref{conj.pairing} in our case.
Recall that $ch=ch_{(0,0)}\oplus ch_{(2,1)}$ (resp. 
$ch^c=ch^c_{(0,0)}\oplus ch^c_{(2,1)}$) identifies 
$K_0(\PP_\Sigma)$ (resp. $K^c_0(\PP_\Sigma)$) with
$H_{(0,0)}\oplus H_{(2,1)}$ (resp. $H^c_{(0,0)}\oplus H^c_{(2,1)}$).
They have been calculated in \eqref{chmaps}.
We use their inverses to rewrite \eqref{ggc} as
$$
\begin{array}{rl}
\langle \Gamma, \Gamma^\circ \rangle
=&-\frac 3{2\pi^2}(\frac 14 (3+2R_3-R_3^2) \otimes \frac 12(R_3^2G_2-G_2) \\
&+ 
4\frac 14(1-2R_3+R_3^2)
\otimes \frac 18 (G_2-2R_3G_2+R_3^2G_2)
\\
&
- \frac 12(R_3^2-1) \otimes \frac 12(3G_2+ R_3G_2 -2R_3^2G_2))
\\
=&-\frac 3{16\pi^2}((3+2R_3-R_3^2) \otimes (R_3^2G_2-G_2) \\
&+ (1-2R_3+R_3^2)
\otimes (G_2-2R_3G_2+R_3^2G_2)
\\
&
- 2(R_3^2-1) \otimes (3G_2+ R_3G_2 -2R_3^2G_2))
\\
=&-\frac 3{4\pi^2}(1\otimes G_2 -R_3\otimes G_2 - R_3^2\otimes R_3 G_2
+ R_3\otimes R_3G_2
\\
&
-R_3^2\otimes G_2+R_3^2\otimes R_3^2G_2)
\end{array}
$$
It remains to observe that this is precisely $-\frac 3{4\pi^2}$ times the inverse of the pairing
on $K_0(\PP_\Sigma)$ and $K_0^c(\PP_\Sigma)$ calculated in \eqref{pairex}. Thus 
we have found  a pairing that satisfies Conjecture  \ref{conj.pairing}.

\medskip

While we have verified the conjecture for $\Sigma$ that uses $v_2$, we also need to 
check that \emph{the same pairing} works for the only other possible fan, namely the one 
that does not use $v_2$. In this case, there are three twisted sectors, indexed by
$(0,0)$, $(1,1)$ and $(2,1)$.  Each of the corresponding spaces $H_\gamma$ and 
$H_\gamma^c$ is one-dimensional. We will briefly go through the key steps of the 
calculation while leaving the details to the reader. We recycle the notation from
the other examples, but they are now applied to a different situation. We use the notation
$w=\ee^{\frac {2\pi\ii}3}$.
$$\begin{array}{l}
K_0(\PP_\Sigma) = \CC[R_3]/\langle 1-R_3^3\rangle,~~
K^c_0(\PP_\Sigma) = \CC[R_3]G_{13}/\langle (1-R_3^3)G_{13}\rangle
\\
\chi(R_3^k G_{13}) = \delta_{k}^{0\hskip-7pt\mod 3}
\\
ch(a_1+a_2R_3+a_3R_3^2) = (a_1+a_2+a_3)e_{(0,0)} \\
\hskip 10pt
\oplus (a_1+wa_2+w^2a_3)e_{(1,1)}
\oplus (a_1+w^2a_2+wa_3)e_{(2,1)}\\
ch^c( G_{13}) = F_{13}\oplus 3\bar F_{\emptyset,(1,1)}\oplus 3\bar F_{\emptyset,(2,1)}.
\end{array}
$$
The Gamma series are expanded in terms of the powers 
of $x^{-1}$. 
$$\begin{array}{l}
\Gamma_{(0,0)} = e_{(0,0)}\oplus x_1^{-\frac 43}x_2^{2}x_3^{-\frac 23} (
-\frac {\sqrt 3}{12\pi}+\frac {7\sqrt 3}{2430\pi}x^{-1}+\ldots)e_{(1,1)}
\\
\hskip 50pt
\oplus x_1^{-\frac 23}x_2 x_3^{-\frac 13}(\frac {\sqrt 3}{2\pi}-\frac {5\sqrt 3}{648\pi}x^{-1}+\ldots)e_{(2,1)}
\\
\Gamma_{(2,1)} = 0e_{(0,0)}
\oplus x_1^{-\frac 13} x_3^{-\frac 23} (
\frac {\sqrt 3}{2\pi}-\frac {2\sqrt 3}{81\pi}x^{-1}+\ldots)e_{(1,1)}
\\
\hskip 50pt
\oplus x_1^{-\frac 53}x_2^2 x_3^{-\frac 43} (\frac {\sqrt 3}{18\pi}
-\frac {4\sqrt 3}{729\pi}x^{-1} + \ldots)e_{(2,1)}
\\
\Gamma_{(1,1)}^\circ = x_1^{-2}x_2^{2}x_3^{-1}(\frac 1{8\pi^2}-\frac 1{80\pi^2}x^{-1}+\ldots )F_{13}
\\
\hskip 50pt \oplus x_1^{-\frac 43}x_2x_3^{-\frac 23} (
-\frac {\sqrt 3}{6\pi}+\frac {7\sqrt 3}{486\pi}x^{-1}+\ldots)^{-1} \bar F_{\emptyset,(1,1)}
\\
\hskip 50pt
\oplus x_1^{-\frac 23} x_3^{-\frac 13}(\frac {\sqrt 3}{2\pi}-\frac {5\sqrt 3}{162\pi}x^{-1}+\ldots)\bar F_{\emptyset,(2,1)}
\\
\Gamma_{(2,1)}^\circ = x_1^{-1}x_2x_3^{-1}(-\frac 1{4\pi^2}+\frac 1{48\pi^2}x^{-1}+\ldots)F_{13}
\\
\hskip 50pt
\oplus x_1^{-\frac 13} x_3^{-\frac 23} (
\frac {\sqrt 3}{2\pi}-\frac {2\sqrt 3}{81\pi}x^{-1}+\ldots)^{-1} \bar F_{\emptyset,(1,1)}
\\
\hskip 50pt
\oplus x_1^{-\frac 53}x_2^2 x_3^{-\frac 43} (\frac {\sqrt 3}{18\pi}-\frac {4\sqrt 3}{729\pi}x^{-1} +\ldots)\bar F_{\emptyset,(2,1)}.\\
\end{array}
$$
The pairing gives 
$$
\langle \Gamma, \Gamma^\circ \rangle=-\frac 9{4\pi^2}(3e_{(2,1)}\otimes
\bar F_{\emptyset,(1,1)} + e_{(0,0)}\otimes F_{13} +3 e_{(1,1)}\bar F_{\emptyset,(2,1)})
$$
$$
=-\frac 3{4\pi^2}(1\otimes G_{13}+R_3\otimes R_3G_{13} + R_3^2\otimes R_3^2G_{13})
$$
as predicted.

\begin{proposition}
Conjectures \ref{conj.ac} and \ref{conj.pairing} hold in the Example $v_1=(0,1)$, $v_2=(1,1)$, $v_3=(3,1)$.
\end{proposition}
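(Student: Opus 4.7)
The plan is to establish the two conjectures separately, leveraging the calculations already performed in Section \ref{sec8}. Conjecture \ref{conj.pairing} is essentially verified by those calculations. Indeed, the computation preceding the proposition exhibits a specific pairing (the one given by Proposition \ref{explicit}) and shows by direct computation that under both Gamma series $\Gamma$ and $\Gamma^\circ,$ the value $\langle \Gamma, \Gamma^\circ \rangle$ coincides up to the factor $-\frac{3}{4\pi^2}$ with the inverse of the Euler characteristic pairing in \eqref{pairex}. The same verification was then carried out for the second possible fan (the one omitting $v_2$), using the three twisted sectors $(0,0), (1,1), (2,1).$ To assemble these into a proof, I would simply collect these two computations and invoke Proposition \ref{explicit} to conclude that the collection of polynomials $p_{c,d}$ defined there satisfies all four bullet points of Conjecture \ref{conj.pairing}: finitely many are nonzero, the sum \eqref{eq:pairing} is constant (by the explicit differentiation argument in Proposition \ref{explicit}), the pairing is non-degenerate (by matching with the non-degenerate Euler pairing of Corollary \ref{perfect}), and it yields the inverse of the Euler pairing for both projective simplicial subdivisions.

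For Conjecture \ref{conj.ac}, let $\Sigma_1$ be the triangulation using $v_2$ (so that $\PP_{\Sigma_1}$ is the weighted blowup) and $\Sigma_2$ the one omitting $v_2$ (so that $\PP_{\Sigma_2}$ is the ``root stack'' on the $(1,3)$ edge). First I would compute the pullback-pushforward isomorphism $pp : K_0(\PP_{\Sigma_1}) \iso K_0(\PP_{\Sigma_2})$ and its analogue on $K_0^c$ explicitly on the bases used in Example \ref{keyex} and in Section \ref{sec8}. These can be read off from a direct calculation with Koszul resolutions on the common open substack obtained by removing the loci where the two stacks differ, together with the fact (invoked right before Conjecture \ref{conj.ac}) that the derived functors are equivalences.

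Next I would perform the analytic continuation of the Gamma series. The natural parameter is $x = x_1^2 x_2^{-3} x_3$, and the two triangulations correspond to the limits $x \to 0$ and $x \to \infty$. For $\Gamma$, the explicit elementary closed forms given near the end of Section \ref{gamex} for $\Gamma_{(0,0)}$ and $\Gamma_{(2,1)}$ allow me to simply continue these functions along a prescribed path (as in \cite{BH2}) avoiding the singular locus $4 + 27x = 0$, and then expand the result around $x = \infty$ (with appropriate fractional powers of $x_i$) to match against the expansions of $\Gamma$ for $\Sigma_2$ displayed in Section \ref{sec8}. Checking that the resulting linear map on the space of solutions is dual to $pp$ then reduces to a finite verification on bases.

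The main obstacle is the analogous analytic continuation for $\Gamma^\circ$, since the explicit calculation remarked on in Section \ref{gamex} shows the coefficient of $D_3 F_2$ does not appear to be elementary. I would circumvent this by using the pairing: since the pairing $\langle \Phi, \Psi \rangle$ in Proposition \ref{explicit} is a constant function of $(x_1,x_2,x_3)$ (and hence preserved by analytic continuation), and since the Euler characteristic pairing is preserved by the derived equivalence $pp,$ the compatibility of $\Gamma^\circ$ with $pp^\dual$ and analytic continuation is \emph{forced} by the compatibility of $\Gamma$ together with non-degeneracy of both pairings. Concretely, Conjecture \ref{conj.pairing} (already established) together with commutativity of the first diagram in Conjecture \ref{conj.ac} implies commutativity of the second diagram. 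This reduces the proof of Conjecture \ref{conj.ac} to the $\Gamma$ case, where the explicit formulas suffice.
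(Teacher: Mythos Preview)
Your proposal is correct and matches the paper's proof in its treatment of Conjecture \ref{conj.pairing} and in the bootstrap of the $\Gamma^\circ$ case from the $\Gamma$ case via the pairing. The difference lies in how you handle Conjecture \ref{conj.ac} for $\Gamma$. You propose to compute $pp$ explicitly on bases and to perform the analytic continuation by hand using the elementary closed forms from Section \ref{gamex}, then match. The paper instead observes that in this example every solution of $bbGKZ(C,0)$ is determined by its $c=(0,0)$ component, so that $bbGKZ(C,0)$ is equivalent to the ordinary GKZ system; the analytic-continuation-equals-pullback-pushforward statement for $\Gamma$ is then already a theorem of \cite{BH2}, and no new computation is required. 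Your route works but costs more; the paper's route buys the $\Gamma$ case for free by reducing to prior literature. Both then deduce the $\Gamma^\circ$ diagram from the $\Gamma$ diagram using that the Euler pairing is preserved by $pp$ and the pairing of Proposition \ref{explicit} is preserved by analytic continuation, exactly as you outline.
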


\begin{proof}
We have verified Conjecture \ref{conj.pairing} above. Observe that the solutions of 
$bbGKZ(C,0)$ are uniquely determined by $(0,0)$ term. As a result, the system is equivalent 
to the usual GKZ system. The analytic continuation statement is already proved in
\cite{BH2}, thus it holds for our $\Gamma$ series. Because both Euler characteristics 
and the pairing of Conjecture \ref{conj.pairing} are unchanged under the pullback-pushforward
and analytic continuation respectively, the statement for $\Gamma^\circ$ follows.
\end{proof}

\section{Pairing with $1$}
In this section we provide important evidence in favor of Conjecture \ref{conj.pairing}
which gives hints at a possible explicit description of the pairing. Specifically,
we can be reasonably certain of the part of the pairing in Conjecture \ref{conj.pairing}
that involves $\Phi_0$ only. In terms of $K$-theory this corresponds to a
well-defined linear function on $K_0(\PP_\Sigma)$, namely the rank at the generic point
as we see in the following proposition.

\medskip
\begin{proposition}\label{delta}
Consider the linear function 
$$
\rk:K_0(\PP_\Sigma)\to \CC
$$
defined by $\rk (\prod_i R_i^{l_i})=1$ for any collection $(l_1,\ldots,l_n)\in \ZZ^n$.
Then $\rk(\Gamma(x_1,\ldots,x_n))_c = \delta_c^0$.
\end{proposition}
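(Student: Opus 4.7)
The plan is to reduce the claim to the untwisted sector $\gamma=0$ and then exploit the vanishing of $1/\Gamma$ at non-positive integers. First I would identify the functional $\rk$ in terms of the decomposition $K_0(\PP_\Sigma) \iso \bigoplus_\gamma H_\gamma$ of Proposition \ref{prop:ch}. Since $\rk$ sends every $R_i$ to $1$, it is an algebra homomorphism $K_0(\PP_\Sigma) \to \CC$; by the parameterization of such homomorphisms used in the construction of $ch$ (the formulas \eqref{ch}), it corresponds to the unique box element $\gamma$ with $e^{2\pi\ii\gamma_i}=1$ for all $i$, namely $\gamma=0$. More explicitly, using $ch_0(R_i) = e^{D_i}$, the functional $\rk$ equals the composition of the projection onto the $\gamma=0$ factor with the evaluation at $D_i = 0$, i.e.\ reading off the degree-zero coefficient in the graded ring $H_0 = A/ZA$.

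Next I would compute this constant term of the $\gamma=0$ summand of $(\Gamma(x))_c$ directly from Definition \ref{def:gamma}. Setting $D_i=0$ converts $x_i^{l_i+D_i/(2\pi\ii)}$ to $x_i^{l_i}$ and $1/\Gamma(1+l_i+D_i/(2\pi\ii))$ to $1/\Gamma(1+l_i)$. The key observation is that $1/\Gamma(1+l_i)$ vanishes whenever $l_i$ is a negative integer, so only those tuples $(l_i)\in L_{c,0}$ with every $l_i\geq 0$ survive, leaving the sum of $\prod_i x_i^{l_i}/l_i!$ over such tuples.

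The final step is to argue that this truncated sum is non-empty precisely when $c=0$. For a tuple with all $l_i\geq 0$, the equality $\sum_i l_i v_i = -c$ exhibits $-c$ as a non-negative combination of the $v_i$, hence as an element of $C$. Combined with $c\in C$ and the strong convexity of $C$ (the $v_i$ lie in the degree-one hyperplane, so $\deg$ is strictly positive on $C\setminus\{0\}$), this forces $\deg(c) = -\deg(c)$, so $c=0$; then $\deg$ applied to $\sum_i l_i v_i = 0$ gives $\sum_i l_i = 0$, and with $l_i\geq 0$ the only surviving tuple is $(l_i)=0$, contributing $\prod_i x_i^0/0!=1$. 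The main conceptual step is the first identification of $\rk$ with evaluation in the $\gamma=0$ sector; once that is in place, the rest is forced by the vanishing of $1/\Gamma$ at negative integers together with the Gorenstein/degree structure built into the $bbGKZ$ setup.
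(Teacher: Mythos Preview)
Your proposal is correct and follows essentially the same route as the paper's proof: identify $\rk$ with the degree-zero evaluation on the $\gamma=0$ summand, set $D_i=0$ in the Gamma series, use the vanishing of $1/\Gamma$ at non-positive integers to force $l_i\geq 0$, and then conclude from $\sum_i l_iv_i=-c$ that all $l_i=0$ and $c=0$. The only cosmetic differences are that the paper checks well-definedness of $\rk$ by verifying it kills the relations \eqref{Kthry} directly (rather than invoking the classification of algebra homomorphisms), and it leaves the final implication ``$l_i\geq 0$, $\sum_i l_iv_i=-c\Rightarrow c=0$, all $l_i=0$'' unelaborated, whereas you spell it out via the degree function; neither changes the substance.
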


\begin{proof}
First, we observe that $\rk$ is well-defined. Indeed, it clearly vanishes on 
all relations \eqref{Kthry}. Second, under the isomorphism of $K_0(\PP_\Sigma)=
\oplus H_\gamma$, the corresponding linear functions on the $\gamma\neq 0$ 
sectors are zero, because at least one $R_i-1$ is invertible there.  Similarly,
for the $H_0$ sector the function $\rk$ reads off the degree zero component only.

\medskip

Since the function $\rk$ vanishes on $H_{\gamma}$, $\gamma\not= 0,$
we only need to see what happens to the terms 
$$
\rk \Big(\prod_i \frac {x_i^{l_i+\frac {D_i}{2\pi\ii}}}{\Gamma (1+l_i+\frac{D_i}{2\pi\ii})}\Big)
$$
with $(l_i)\in L_{c,0}$. Since $\rk$ vanishes on all positive degree monomials
in $D_i$, we need $l_i\geq 0$. Since $\sum_i l_i v_i = -c$, the only nonzero
terms occur for
$l_i=0$ for all $i$ and $c=0$. In that case, we have $\rk(1)=1$.
\end{proof}

\medskip\noindent
If we consider the pairing of  Conjecture \ref{conj.pairing} applied to
$\Phi_c=\delta_c^0$ and arbitrary $\Psi$, then this pairing only 
involves the $p_{0,d}$. In what follows we show that there is a natural 
choice of such $p_{0,d}$ so that $\sum_{d\in C^\circ} p_{0,d}\Psi_d$ is 
constant and is compatible with Gamma series and the pairing on K-theories.

\medskip

Recall, see for example \cite[Def. 2.3]{batyrevmaterov}, that the
logarithmic Hessian of $\sum_i x_i[v_i]$
is an element of the semigroup ring $\CC[C]$ given by 
$$
{\rm Hessian}(x_1,\ldots,x_n)=\sum_{\vert I\vert =\rk N}\Vol_I^2( \prod_{i\in I}x_i )[\sum_{i\in I}v_i]
$$
where the sum is taken over subsets of $\{1,\ldots,n\}$ and 
where $\Vol_I$ denotes the wedge of $v_i,i\in I$, in the increasing order,
as a multiple of the generator of $\Lambda^{\rk N}N$.
We can use the coefficients of Hessian to build a constant linear function on the solutions of 
$bbGKZ(C^\circ,0)$ as follows.

\begin{proposition}\label{const}
For any solution $\Psi$ of $bbGKZ(C^\circ,0)$ the function of $x=(x_1,\ldots,x_n)$
\begin{equation}\label{with1}
\sum_{d\in C^\circ} {\rm Coeff}_{d}({\rm Hessian}(x)) \Psi_d(x)
\end{equation}
is constant.
\end{proposition}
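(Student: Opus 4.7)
The plan is to show $\partial_j F = 0$ for each $j$, where $F(x) := \sum_{|I|=\rk N} \Vol_I^2 \, x^I \, \Psi_{c_I}(x)$ with $x^I = \prod_{i \in I} x_i$ and $c_I = \sum_{i \in I} v_i$; the sum is implicitly restricted to $I$ with $c_I \in C^\circ$. By the product rule together with the GKZ derivative relation $\partial_j \Psi_d = \Psi_{d+v_j}$,
\begin{equation*}
\partial_j F \;=\; \underbrace{\sum_{I \ni j} \Vol_I^2 \, x^{I \setminus j} \, \Psi_{c_I}}_{A_j} \;+\; \underbrace{\sum_I \Vol_I^2 \, x^I \, \Psi_{c_I + v_j}}_{B_j},
\end{equation*}
so the task is to show $A_j + B_j \equiv 0$.

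The key tool is the GKZ scaling relation $\sum_i \langle m, v_i \rangle x_i \, \Psi_{d+v_i} = -\langle m, d\rangle \Psi_d$ applied with $m$ attached to a codimension-one face. For each subset $K \subset \{1,\ldots, n\}$ with $|K| = \rk N - 1$ such that $\{v_k\}_{k \in K}$ spans a hyperplane $H_K \subset N_\QQ$ and $j \notin K$, let $m_K \in M_\QQ$ be the normal to $H_K$, normalized so that Cramer's rule reads $\langle m_K, v_i\rangle = \epsilon_K(i) \Vol_{K \cup \{i\}}$ for $i \notin K$, with signs $\epsilon_K(i) \in \{\pm 1\}$ fixed by the orientation of $K$. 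Because $\langle m_K, v_k\rangle = 0$ for $k \in K$ and $\langle m_K, c_K\rangle = 0$ (as $c_K \in H_K$), the scaling relation at $d = c_K + v_j \in C^\circ$ collapses to
\begin{equation*}
\epsilon_K(j) \Vol_{K \cup \{j\}} \Psi_{c_K + v_j} \;=\; -\sum_{i \notin K} \epsilon_K(i) \Vol_{K \cup \{i\}} \, x_i \, \Psi_{c_K + v_j + v_i}.
\end{equation*}
Multiplying both sides by $\epsilon_K(j) \Vol_{K \cup \{j\}} x^K$ and summing over all valid $K$ reproduces $A_j$ on the left, after reindexing $I = K \cup \{j\}$.

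On the right, grouping terms by $I := K \cup \{i\}$ produces a sum over all $|I| = \rk N$ whose coefficient of $x^I \Psi_{c_I + v_j}$ equals $-\Vol_I \sum_{i \in I} \epsilon_{I \setminus i}(j) \epsilon_{I \setminus i}(i) \Vol_{(I \setminus i) \cup \{j\}}$. When $j \in I$ only $i = j$ gives a valid $K = I \setminus \{j\}$ (any other choice forces $j \in K$, violating the restriction), yielding simply $-\Vol_I^2$. When $j \notin I$, the Plücker identity on the $(\rk N + 1)$-element set $I \cup \{j\}$,
\begin{equation*}
\sum_{i \in I \cup \{j\}} (-1)^{\sigma(i)} \Vol_{(I \cup \{j\}) \setminus \{i\}} \, v_i \;=\; 0,
\end{equation*}
paired with the degree functional $\deg \in M_\QQ$ (which exists by the Gorenstein hypothesis and takes value $1$ on each $v_i$), yields the scalar identity $\sum_{i \in I} (-1)^{\sigma(i) - \sigma(j)} \Vol_{(I \cup \{j\}) \setminus \{i\}} = -\Vol_I$; matching these Plücker signs against the Cramer signs again gives $-\Vol_I^2$. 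Summing, the right-hand side is exactly $-B_j$, completing the argument.

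\textbf{Main obstacle.} The most delicate step is the sign bookkeeping: the Cramer signs $\epsilon_K(i)$, the Plücker signs $(-1)^{\sigma(i)}$, and the signs introduced by reindexing must align consistently for every pair $(I, j)$. This is essentially routine but requires careful orientation conventions. A secondary point to verify is that every intermediate $\Psi$-index such as $c_K + v_j + v_i$ lies in $C^\circ$ so the $bbGKZ(C^\circ, 0)$ relations genuinely apply; this is automatic since $c_I = c_K + v_j \in C^\circ$ by hypothesis and $C^\circ + C \subseteq C^\circ$.
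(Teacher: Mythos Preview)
Your proposal is correct and follows essentially the same route as the paper's own proof. Both arguments differentiate $F$ with respect to a fixed coordinate, split the result into a ``monomial'' part and a ``$\partial\Psi$'' part, apply the linear GKZ relations with $m$ taken to be the normal vector to a codimension-one subset $K=I\setminus\{j\}$ (the paper writes this as $\mu = \wedge(\Lambda_{1\neq i\in I})$), and then reduce the remaining identity to the cofactor/linear-dependence relation among $\rk N+1$ of the $v_i$ together with the hyperplane condition $\deg v_i=1$; the paper phrases this last step as ``$\sum_{k} \alpha_k = 0$'' while you phrase it as pairing the Pl\"ucker relation with $\deg$.
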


\begin{proof}
The expression of \eqref{with1} is given by 
$$
\sum_{\vert I\vert =\rk N} \Psi_{\sum_{i\in  I} v_i}(\prod_{i\in I}x_i )\Vol_I^2.
$$
To prove the proposition, we will show that the partial derivative of the above function with respect to $x_1$ vanishes.
This derivative  is given by 
$$
S=\sum_{\vert I\vert =\rk N} \Psi_{v_1+\sum_{i\in  I} v_i}(\prod_{i\in I}x_i )\Vol_I^2
+
\sum_{\vert I\vert =\rk N,I\ni 1} \Psi_{\sum_{i\in  I} v_i}(\prod_{1\neq i\in I}x_i )\Vol_I^2.
$$

\medskip\noindent
For each $I\ni 1$ of size $\rk N$ consider 
the relations \eqref{gkz} for $\mu$ given by composing
$\wedge (\Lambda_{1\neq i\in I})$ with $\Lambda^{\rk N}N\to \ZZ$ and $c=\sum_{i\in I}v_i$.
We will also multiply each relation by $(\prod_{1\neq i\in I} x_i)\Vol_I$.
Notice that this allows us to consider 
$c$ which are not in the interior of $C$, since in that case we multiply 
the fictitious quantities by zero.
When we add these for all $I$ as above we get
$$
\sum_{\vert I \vert = \rk N, I\ni 1} \sum_{j\not\in I\backslash \{1\}} \Psi_{\sum_{i\in I} v_i+v_j}
(\prod_{i\in \{j\}\cup (I\backslash \{1\})} x_i )
\Vol_I 
\Vol_{\{j\}\cup (I\backslash \{1\})} \sigma(I,1,j)
$$
$$
+\sum_{\vert I\vert = \rk N,I\ni 1} \Psi_{\sum_{i\in I} v_i}(\prod_{1\neq i\in I} x_i)\Vol_I^2
=0,$$
where $\sigma(I,1,j)$ is $\pm 1$ depending of whether the location of $j$ in 
$\{j\}\cup (I\backslash \{1\})$ is odd or even. Thus, we have 
$$
S=\sum_{\vert I\vert =\rk N} \Psi_{v_1+\sum_{i\in  I} v_i}(\prod_{i\in I}x_i )\Vol_I^2
-
\sum_{\vert I \vert = \rk N, I\ni 1} \sum_{j\not\in I\backslash \{1\}} \Psi_{\sum_{i\in I} v_i+v_j}
$$$$
(\prod_{i\in \{j\}\cup (I\backslash \{1\})} x_i )
\Vol_I 
\Vol_{\{j\}\cup (I\backslash \{1\})} \sigma(I,1,j)
$$
$$
=\sum_{\vert I\vert =\rk N, I\not\ni 1} \Psi_{v_1+\sum_{i\in  I} v_i}(\prod_{i\in I}x_i )\Vol_I^2
-
\sum_{\vert I \vert = \rk N, I\ni 1} \sum_{j\not\in I} \Psi_{\sum_{i\in I} v_i+v_j}
$$$$
(\prod_{i\in \{j\}\cup (I\backslash \{1\})} x_i )
\Vol_I 
\Vol_{\{j\}\cup (I\backslash \{1\})} \sigma(I,1,j)
$$
$$
=\sum_{\vert J\vert =\rk N, J\not\ni 1} \Psi_{v_1+\sum_{i\in J} v_i }(\prod_{i\in J}x_i)\Vol_J
\sum_{k\in J\cup\{1\}}\Vol_{\{1\}\cup (J\backslash \{k\})}\sigma(J,k,1).
$$
It thus remains to show that for each such $J$ 
\begin{equation}\label{need}
\Vol_J\sum_{k\in J\cup\{1\}}
\Vol_{\{1\}\cup (J\backslash \{k\})}
\sigma(J,k,1)=0.
\end{equation}
If $\Vol_J\neq 0$, then $v_i, i\in J$ span $N_\RR$. As a result, $v_1$ and $v_i, i\in J$
satisfy a linear dependence relation
$$
\sum_{k\in  J\cup \{1\}} \alpha_jv_j=0.
$$
with $\alpha_1\neq 0$.
Since all the elements $v_i$ lie in a hyperplane, it follows that $\sum_{k\in  J\cup \{1\}}\alpha_k=0$.
It remains to observe that 
$$\Vol_{\{1\}\cup (J\backslash \{k\})}\sigma(J,k,1)=
-\Vol_J \alpha_k\alpha_1^{-1}.$$\end{proof}

In view of Proposition \ref{delta}, if one has a pairing that satisfies the 
condition of Conjecture \ref{conj.pairing}, then
plugging in $\Phi_c=\delta_c^0$
$$
\langle \delta_c^0, \Gamma^\circ\rangle 
$$ 
must produce the element of $K_0^c(\PP_\Sigma)$ which is
the contraction of the dual to the Euler characteristics pairing with the rank function.
Consider the element $ch^c[\OO_p]$ of $H_0^c\subseteq \oplus_\gamma H^c_\gamma$ 
which is given
by $\Vol_IF_I$
for any cone $I$ of $\Sigma$ of maximum dimension.  We should view $[\OO_p]$  
as a class of a generic point on $\PP_\Sigma$ (even thought the definition of 
the abelian category only used complexes with cohomology supported on the
$\pi^{-1}(0)$).  Euler characteristics with this $[\OO_p]$ is precisely the rank
function on $K_0(\PP_\Sigma)$. As a consequence,
the following proposition is consistent with Conjecture \ref{conj.pairing}.
\begin{proposition}\label{pairingtest}
For any choice of projective subdivision $\Sigma,$ we have 
$$
\sum_{d\in C^\circ} {\rm Coeff}_{d}({\rm Hessian}(x)) \Gamma^\circ_d(x_1\ldots,x_n)=
\frac{\Vol({\rm conv}(v_1,\ldots,v_n))}
{(2\pi\ii)^{\rk N}}[\OO_p].
$$
where $\Vol$ is the normalized volume.
\end{proposition}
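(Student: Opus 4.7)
The plan is to use Proposition \ref{const} to reduce the task to computing the $x$-independent component of the left hand side, then extract this component from the Gamma series expansion. Each summand $\Vol_I^2(\prod_{i\in I}x_i)\Gamma^\circ_{\sum_{i\in I}v_i}$ expands as a sum of terms of the form (monomial in $x$)$\times$(element of $\bigoplus_\gamma H^c_\gamma$), and by Proposition \ref{const} the total is constant in $x$, so it equals the sum of its genuinely $x$-independent terms.

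First I would identify which terms are constant in $x$. A term of $\Gamma^\circ_d$ indexed by a sector $\gamma$ and $(l_i)\in L_{d,\gamma}$, after multiplication by $\prod_{i\in I}x_i$, has exponent of $x_i$ equal to $l_i+\chi_I(i)+D_i/(2\pi\ii)$, where $\chi_I(i)=1$ if $i\in I$ and $0$ otherwise. The constraints $l_i-\gamma_i\in\ZZ$ with $0\leq\gamma_i<1$ force the non-nilpotent part to vanish only when $\gamma_i=0$ for all $i$, hence $\gamma=0$, and $l_i=-\chi_I(i)$. Thus for each maximal $I\in\Sigma$ there is exactly one constant-in-$x$ term, and its associated set $\sigma=\{i:l_i<0\}$ equals $I$.

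Next I would evaluate this unique contribution. Using $s\Gamma(s)=\Gamma(s+1)$ with $s=D_i/(2\pi\ii)$ to rewrite each factor $(\Gamma(D_i/(2\pi\ii))\,D_i)^{-1}$ as $(2\pi\ii)^{-1}\Gamma(1+D_i/(2\pi\ii))^{-1}$, the term becomes
\[
\frac{\Vol_I^2}{(2\pi\ii)^{\rk N}}\prod_i\frac{x_i^{D_i/(2\pi\ii)}}{\Gamma(1+D_i/(2\pi\ii))}\,F_I.
\]
Since $I$ is a maximal cone of $\Sigma$, we have $D_iF_I=0$ in $H^c_0$ for every $i$: for $i\notin I$ this is the relation \eqref{2}, and for $i\in I$ the element would have polynomial degree $\rk N+1$, beyond the top degree of the Artinian module $H^c_0$. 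So $\prod_i x_i^{D_i/(2\pi\ii)}F_I=F_I$, and the contribution collapses to $\Vol_I^2(2\pi\ii)^{-\rk N}F_I$.

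Finally, summing over maximal cones and using two classical facts finishes the proof. One: in the one-dimensional socle of $H^c_0$, the classes $\Vol_IF_I$ all coincide with a common element, identified as $[\OO_p]$; this follows by multiplying the linear relations $\sum_j(m\cdot v_j)D_j=0$ in $Z$ by $F_W$ for each interior wall $W$, producing $(m\cdot v_{i_1})F_{I_1}+(m\cdot v_{i_2})F_{I_2}=0$ and hence $\Vol_{I_1}F_{I_1}=\Vol_{I_2}F_{I_2}$ for adjacent maximal cones. Two: $\sum_I\Vol_I$ equals $\Vol({\rm conv}(v_1,\ldots,v_n))$, because the maximal simplices of $\Sigma$ triangulate the base polytope in the degree-one hyperplane. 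I expect the main obstacle to be not conceptual but bookkeeping: rigorously justifying that the coefficient extraction really captures the value of the genuinely constant sum, given the admixture of rational powers (from twisted sectors) and logarithmic corrections (from $x_i^{D_i/(2\pi\ii)}$) in the formal Gamma series expansion.
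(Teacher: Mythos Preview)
Your outline matches the paper's proof closely: both invoke Proposition~\ref{const} to reduce to identifying the constant, locate the single surviving term $\gamma=0$, $l_i=-\chi_I(i)$ in each summand, simplify via $\Gamma(s)s=\Gamma(1+s)$ and $D_iF_I=0$, and then sum $\Vol_I[\OO_p]$ over maximal cones.

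The one substantive difference is exactly the point you flag at the end, and it is a bit more than bookkeeping. The paper does \emph{not} try to ``read off the $x$-independent coefficient'' from the series directly. Instead it picks a strictly convex $\Sigma$-piecewise-linear function $\mu$ (this is where projectivity of $\Sigma$ is used), sets $x_i=\ee^{t\mu_i}$, and sends $t\to+\infty$. Then each term of $\Vol_I^2(\prod_{i\in I}x_i)\Gamma^\circ_{\sum v_i}$ has growth $\ee^{t\sum_i\hat l_i\mu_i}$ with $\hat l_i=l_i+\chi_I(i)$, and the convexity inequality \eqref{convex} forces $\sum_i\hat l_i\mu_i\leq 0$ whenever $F_\sigma\neq 0$, with equality only for $\hat l=0$. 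So in the limit every non-constant term dies and the constant value is recovered. Your coefficient-extraction heuristic is morally the same statement, but making it rigorous for a convergent series with mixed rational exponents and log-polynomials ultimately comes down to this limit argument; without $\mu$ you have no mechanism to separate the genuine constant term from a possible accumulation of non-constant terms.

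Two small points worth patching. First, the Hessian sums over \emph{all} bases $I$ of size $\rk N$, not only maximal cones of $\Sigma$; you should say explicitly that for $I\notin\Sigma$ the candidate constant term has $\sigma=I$ and hence $F_I=0$ by Definition~\ref{def:gammac}. Second, your argument that $\Vol_IF_I$ is independent of the maximal cone $I$ (hence equals $[\OO_p]$) is correct and in fact more explicit than what the paper provides, which simply asserts this identification.
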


\begin{proof}
By Proposition \ref{const}, the left hand side of the equation is a constant. Therefore, 
it is enough to prove the 
statement asymptotically as $(x_1,\ldots,x_n)$ approaches the  degeneracy point that 
corresponds to the triangulation $\Sigma$. Specifically, let $\mu_i\in \RR$ be chosen 
compatible with $\Sigma$ in the following sense.
We extend these values to a $\Sigma$ piecewise linear function  $\mu$  on $\CC$
by using $\mu(v_i)=\mu_i$ for $\{i\}\in\Sigma$. 
For every positive linear combination $\sum_{j\in J} \alpha_j v_j$ there 
holds
\begin{equation}\label{convex}
\sum_{j\in J} \alpha_j \mu_j\geq \mu(\sum_{j\in J}\alpha_j v_j)
\end{equation}
with equality if and only if $J\in\Sigma$. The fact that such collection of $\mu_i$ 
exists is precisely the projectivity condition on $\Sigma$.

\medskip
 
Consider $x_i=\ee^{t\mu_i}$ and let $t\to +\infty$. We will study the asymptotic behavior
of $\Gamma^\circ$. Uniform convergence allows one to consider limits of individual terms.

\medskip

The terms of  $\Vol_I^2(\prod_{i\in I}x_i)\Gamma^\circ(x_1,\ldots,x_n)_{\sum_{i\in I}v_i}$
for  $(l_j)\in L_{\sum_{i\in I}v_i,\gamma}$ are 
$$
\Vol_I^2(\prod_{i\in I}x_i)\Big(\prod_{j=1}^n\frac {x_j^{l_j+\frac {D_j}{2\pi\ii}}}{\Gamma(1+l_j+\frac {D_j}{2\pi\ii})}\Big)
(\prod_{j, l_j\in \ZZ_{<0}} D_j^{-1} )F_{\{j,l_j\in \ZZ_{<0}\}}.
$$
As $t\to \infty$ this is, up to an invertible element,
$$
\ee^{t(\sum_{i=1}^nl_i\mu_i+\sum_{i\in I}\mu(v_i))}F_{\{j,l_j\in \ZZ_{<0}\}}.
$$

\medskip

In the limit $t\to+\infty$ only  the terms with 
 $\sum_{i=1}^nl_i\mu_i+\sum_{i\in I}\mu_i\geq  0$ contribute.
In addition, to get a nonzero term, we must have ${\{j,l_j\in \ZZ_{<0}\}}$ be a cone in $\Sigma$ that contains
$\sigma(\gamma)$.  

\medskip

Consider the linear relation
$$
\sum_{i=1}^n l_iv_i +\sum_{i\in I}v_i=\sum_{i=1}^n \hat l_i v_i = 0$$
on $v_i$. The negative coefficients of this linear combination are a subset of 
${\{j,l_j\in \ZZ_{<0}\}}\cup \sigma(\gamma)$. Thus we have 
$$
\sum_{\hat l_i\geq 0}\hat  l_i\mu_i \geq \sum_{\hat l_i <0} (-\hat l_i)\mu_i
=\mu( -\sum_{\hat l_i<0}\hat l_i v_i) = \mu(\sum_{\hat l_i\geq 0}\hat  l_iv_i) .
$$
The convexity of $\mu$ then implies that inequality is in fact equality and 
there exists a cone of $\Sigma$ which contains all $v_i$ for which $\hat l_i\geq 0$.
Thus we have that both for $i$ with $l_i\geq 0$ and for $i$ with $\hat l_i<0$ 
there are cones of $\Sigma$ that contain them.  The two sides 
of 
$$
\sum_{i,\hat l_i\geq 0} \hat l_iv_i = \sum_{i,\hat l_i\geq 0}(-\hat l_i)v_i
$$
are contained in, necessarily disjoint cones of $\Sigma$. The only way this can happen 
is when 
all $\hat l_i=0$. This means that we are working in the non twisted sector,
$\gamma=0$. It also means that $l_i=-1$ for $i\in I$ and $l_i=0$ for $i\not\in I$.
Note that $F_I$ is annihilated by all $D_j$ because $\vert I\vert =\rk N$.
Thus the limit of the  corresponding term of the Gamma series is 
$$
\Vol_I^2  \prod_{i\in I}( \frac {1}{\Gamma(\frac {D_i}{2\pi\ii})}D_i^{-1})F_I
=\frac 1{(2\pi\ii)^{\rk N}}\Vol_Ich^c([O_p]).
$$
\medskip

When the above terms  are added over all $I$ that form a basis of a maximum dimensional
cone of $\Sigma$, the claim follows.
\end{proof}

\begin{remark}
Proposition \ref{pairingtest} explains the coefficient $-\frac 3{4\pi^2}$ that we saw in
Section \ref{sec8}. We originally thought that a multiple of 
$$\langle\Phi,\Psi\rangle  = \sum_{c,d} \Phi_c\Psi_d(-1)^{\deg c} {\rm Coeff}_{c+d} (H(x))
$$
would always provide the pairing of Conjecture \ref{conj.pairing}, 
but the example showed this to not be the case. However, Proposition \ref{explicit}
gives hope that some modification of the above guess that takes into account twisted sectors
may work in general.
\end{remark}

\end{document}